\newtheorem{lemma}{Lemma}[section]
\newtheorem{teo}[lemma]{Theorem}
\newtheorem{prop}[lemma]{Proposition}
\newtheorem{cor}[lemma]{Corollary} 
\newtheorem{conj}[lemma]{Conjecture}
\theoremstyle{definition}
\newtheorem{defn}[lemma]{Definition}
\newtheorem{example}[lemma]{Example}
\theoremstyle{remark}
\newtheorem{rem}[lemma]{Remark} 
\newcommand{\Iso}{{\rm Isom}}
\newcommand{\Conv}{{\rm Conv}}
\newcommand{\matR} {\ensuremath {\mathbb{R}}}
\newcommand{\matZ} {\ensuremath {\mathbb{Z}}}
\newcommand{\matH} {\ensuremath {\mathbb{H}}}
\newcommand{\block}{\ensuremath {\mathscr{B}}}
\newcommand{\cell}{\ensuremath {\mathscr{C}}}
\newcommand{\Vol}{{\rm Vol}}
\newcommand{\prf}[1]{\vspace{2pt}\noindent\textit{Proof of \ref{#1}}.\ }
\newcommand{\prfend}{{\hfill\hbox{$\square$}\vspace{2pt}}}
\author{Alexander Kolpakov}
\address{Department of Mathematics, 1326 Stevenson Center, Nashville, TN 37240, USA}
\email{kolpakov dot alexander at gmail dot com}
\author{Bruno Martelli}
\address{Dipartimento di Matematica ``Tonelli'', Largo Pontecorvo 5, 56127 Pisa, Italy}
\email{martelli at dm dot unipi dot it}
\thanks{The first named author is supported by the SNSF researcher scholarship PBFRP2-145885 and the SNSF project ``Discrete hyperbolic geometry'' no. 200020-144438/1. The second named author was supported by the Italian FIRB project ``Geometry and topology of low-dimensional manifolds'', RBFR10GHHH}
\title[]{Hyperbolic four-manifolds with one cusp}
\begin{document}

\begin{abstract}
We introduce an algorithm which transforms every four-dimensional cubulation into an orientable cusped finite-volume hyperbolic four-manifold. Combinatorially distinct cubulations give rise to topologically distinct manifolds.

Using this algorithm we construct the first examples of finite-volume hyperbolic four-manifolds with one cusp. More generally, we show that the number of $k$-cusped hyperbolic four-manifolds with volume $\leqslant V$ grows like $C^{V\ln V}$ for any fixed $k$. As a corollary, we deduce that the $3$-torus bounds geometrically a hyperbolic manifold.
\end{abstract}

\maketitle

\section*{Introduction}
By Margulis' Lemma, a finite-volume complete hyperbolic $n$-manifold $M^n$ has a finite number of ends called \emph{cusps}, each of which is diffeomorphic to $N^{n-1} \times [0, +\infty)$ for a certain closed connected flat $(n-1)$-manifold $N^{n-1}$. 

In dimension three, we may construct cusped hyperbolic manifolds in various ways, for instance by removing a knot or link complement from $S^3$. There are essentially two different techniques to prove that a link complement is hyperbolic: by decomposing it into geodesic ideal hyperbolic polyhedra, or by checking that the manifold does not contain an immersed essential surface with $\chi \geqslant 0$ and thus invoking geometrisation. The first method was used by Thurston in his notes \cite{Th}, where he constructed various hyperbolic $3$-manifolds with an arbitrary number of cusps. The computer program SnapPy \cite{SnapPy} may be used to check the hyperbolicity of any link with a reasonable number of crossings.

In higher dimensions, constructing hyperbolic manifolds is more complicated. Due to the absence of a geometrisation theorem of any kind, the hyperbolic structure on a smooth manifold needs to be established explicitly, and this is typically done either by arithmetic methods or by assembling geodesic polyhedra. The largest known census of cusped hyperbolic $4$-manifolds is the list produced by J.~Ratcliffe and S.~Tschantz \cite{RT} which contains $1171$ distinct manifolds, all obtained by pairing isometrically the faces of the ideal hyperbolic $24$-cell: these manifolds have either $5$ or $6$ cusps. 

We construct here the first example of a finite-volume hyperbolic four-manifold having only one cusp. One of the motivations for this work is a result by D.~Long and A.~Reid \cite{LR} which shows that, amongst the six diffeomorphism types of orientable flat $3$-manifolds, at least two of them cannot be cusp sections of a single-cusped four-manifold (but they are sections in some multi-cusped one \cite{N}). The authors then asked \cite{LR, LRall} whether any single-cusped hyperbolic manifold exists in dimension $n\geqslant 4$. The techniques introduced in the present paper answer this question in the affirmative if the dimension is $n=4$, but are not applicable in higher dimensions. We note that by a recent result of M.~Stover \cite{Stover} there are no single-cusped hyperbolic \textit{arithmetic} orbifolds in dimension $n\geqslant 30$. %In particular, if there are no higher-dimensional single-cusped hyperbolic manifolds, then there are only finitely many flat manifolds that bound geometrically. 

In the present paper, we show that there are plenty of single-cusped hyperbolic four-manifolds, and more generally of hyperbolic four-manifolds with any given number $k\geqslant 1$ of cusps. Let $\rho_k(V)$ be the number of pairwise non-homeomorphic orientable hyperbolic four-manifolds with $k$ cusps and volume at most $V$. The main result is the following.

\begin{teo} \label{V:intro:teo}
For every integer $k\geqslant 1$ there are two constants $C>1$, $V_0>0$ such that $\rho_k(V) > C^{V\ln V}$ for all $V>V_0$.
\end{teo}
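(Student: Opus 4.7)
My plan is to combine the cubulation-to-manifold algorithm announced in the abstract with a purely combinatorial counting of four-dimensional cubulations, and then to impose the constraint on the number of cusps.

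The first step is to turn the statement into a combinatorial one. Given the algorithm that produces, from every four-dimensional cubulation $\mathcal{C}$ with $n$ four-cubes, an orientable cusped hyperbolic four-manifold $M(\mathcal{C})$, and given that combinatorially distinct cubulations yield topologically distinct manifolds, I would first verify that $\Vol(M(\mathcal{C})) \leqslant A \cdot n$ for some absolute constant $A>0$. This should follow from the fact that each cube is replaced by a piece of a fixed ideal hyperbolic polytope (most naturally the ideal $24$-cell, already exploited by Ratcliffe--Tschantz), whose volume is bounded.

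The second step is a combinatorial lower bound on the number $c(n)$ of cubulations of size $n$. Taking $n$ labeled $4$-cubes and pairing their boundary cubical $3$-faces in all admissible ways produces of the order of $n^{\alpha n}$ distinct gluings for some $\alpha>0$; dividing by the $n!$ possible relabelings of the cubes and by the finite symmetries of the individual cube, and restricting to gluings that satisfy whatever regularity the algorithm requires, one still obtains $c(n) \geqslant n^{\beta n}$ for some $\beta>0$. Setting $V = An$ translates this into $\rho(V) \geqslant C^{V \ln V}$ for the total count $\rho(V)=\sum_k \rho_k(V)$.

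The third and decisive step is to force the cusp count to equal a prescribed $k$. The number of cusps of $M(\mathcal{C})$ should correspond to some combinatorial invariant of $\mathcal{C}$---presumably to the equivalence classes of vertices, or of another distinguished family of subcomplexes, identified under the cubulation's gluings. My strategy is to isolate a subfamily of $n^{\beta' n}$ cubulations in which this invariant can be freely prescribed, or, failing that, to fix one cubulation whose $M$ has the correct number of cusps and then attach, in $n^{\beta' n}$ different ways, a bounded cubical gadget along a cubical face whose contribution does not alter the cusp count. Either way one preserves the super-exponential lower bound and simultaneously fixes the cusp number.

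The first two steps are essentially bookkeeping once the algorithm of the abstract is established; the genuine obstacle is the third, where one must engineer enough combinatorial freedom inside the rigid topological requirement of having exactly $k$ cusps. The borderline case $k=1$ seems the most delicate, since a single cusp is a very restrictive global condition, and the argument will likely require an explicit seed construction (yielding the first one-cusped example promised in the introduction) together with a local move that generates the required $n^{\beta' n}$ variants without creating additional cusps.
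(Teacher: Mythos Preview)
Your three-step architecture matches the paper's: volume is linear in the number $n$ of hypercubes, there are $C^{n\ln n}$ cubulations, and one must then restrict to those with exactly $k$ cusps. For step (2) the paper uses specifically the Bollob\'as count of $8$-regular simple graphs, realising each such graph as the incidence graph of some cubulation.

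Step (3) is where your plan diverges and where the real content lies. First, a correction: the cusps of $M(\mathcal{C})$ correspond not to vertex classes but to the \emph{cycles of square $2$-faces} induced by the facet pairings. Second, your option (b) cannot work as written: attaching a single bounded gadget to a fixed seed gives only finitely many cubulations, not $n^{\beta' n}$, and if instead you attach many copies you must explain why the results are pairwise inequivalent and why each attachment leaves the cycle count unchanged---neither is automatic. The paper inverts your logic. It does \emph{not} start with the right cusp count and then try to inject variety; it starts with the variety---an arbitrary $8$-regular simple graph $G$ on $n$ vertices, promoted to a cubulation with uncontrolled cycle count---and then performs at most $O(n)$ local ``flower'' moves, each inserting one extra hypercube at an edge of the incidence graph so as to merge two $2$-face cycles into one (or, via a different partial cubulation, create extra cycles). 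These moves drive the cycle count to exactly $k$ while enlarging the incidence graph to some $G'$ with $O(n)$ vertices. The key observation, which replaces your missing injectivity argument, is that $G$ is recoverable from $G'$ simply by deleting all loop-edges; hence non-isomorphic starting graphs yield inequivalent cubulations, and the Bollob\'as lower bound survives intact. Constructing the flower that reduces the cycle count by exactly one is the explicit combinatorial heart of the proof, and it relies on the specific one-cycle single-hypercube cubulation exhibited earlier in the paper.
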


Let $\rho(V)$ be the total number of hyperbolic four-manifolds with volume at most $V$: it was proved in  \cite{BGLM} that $C_1^{V\ln V} > \rho(V) > C_2^{V\ln V}$ for some constants $C_1 > C_2 > 1$. 

Following P.~Ontaneda \cite{O}, we say that a flat manifold \emph{bounds geometrically} a hyperbolic manifold if it is diffeomorphic to a cusp section of some single-cusped hyperbolic manifold. By analysing the cusp shapes we deduce the following corollary.
\begin{cor}
The $3$-torus bounds geometrically a hyperbolic manifold.
\end{cor}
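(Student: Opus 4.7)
\medskip
\noindent\emph{Plan of proof of the Corollary.}
The strategy is to apply the algorithm of Theorem~\ref{V:intro:teo} to a specific $4$-dimensional cubulation and then, by tracing the construction, recognise the resulting cusp cross-section as the $3$-torus. The first step is to make explicit how cusp cross-sections arise from the algorithm: the output manifold is assembled from ideal hyperbolic pieces, each cusp corresponds to an orbit of ideal vertices under the face identifications, and the cross-section of such a cusp is obtained by gluing together the vertex links (which are Euclidean $3$-cubes). Consequently every cusp section is a closed flat $3$-manifold carrying a canonical cubical structure read off from the combinatorics of the input cubulation near the corresponding vertex.

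Having this dictionary, the next step is to build a single-cusped example whose vertex-link combinatorics produce the $3$-torus rather than one of the five other orientable flat $3$-manifolds. Theorem~\ref{V:intro:teo} with $k=1$ guarantees the existence of many single-cusped manifolds, so there is plenty of freedom; the task is to arrange the combinatorics of the unique ideal vertex orbit so that the Euclidean cubes fit together into a single cube with opposite faces identified (or, more generally, into a cubulation of $T^3$ in which all gluings are translations). Since the $3$-torus is the simplest flat model, a small and highly symmetric cubulation should do the job.

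The main obstacle is the bookkeeping: one has to certify that, in the chosen cubulation, there is exactly one orbit of vertices under the face pairings, that the holonomy picked up when going around the link is trivial (so the cusp section is $T^3$ rather than, say, the Hantzsche--Wendt manifold or another quotient), and that the resulting $4$-manifold is orientable as required by Theorem~\ref{V:intro:teo}. All of this is a finite combinatorial check on an explicit cubulation; once that check is carried out for one well-chosen example, the corollary follows immediately from the definition of geometric bounding due to Ontaneda~\cite{O}.
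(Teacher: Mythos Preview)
Your plan is essentially the paper's own approach: construct an explicit orientable cubulation with a single cycle of square $2$-faces (so the resulting manifold $\mathscr{M}$ has one cusp), then verify that the monodromy of that cycle is trivial, which by Proposition~\ref{monodromy:prop} forces the cusp section to be the $3$-torus $T\times[0,h]/_{\mathrm{id}}$.

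One caveat worth flagging: you write that ``a small and highly symmetric cubulation should do the job,'' but symmetry actually works against you. The most symmetric one-hypercube cubulation (opposite facets paired by translations, Example~\ref{1:example}) has \emph{six} cycles of $2$-faces, not one. To collapse all $24$ squares into a single cycle the paper uses a deliberately asymmetric set of pairings (Fig.~\ref{cubulationM:fig}), and the trivial-monodromy check is then a somewhat lengthy frame-tracking along all $24$ squares. So while your outline is correct, the ``well-chosen example'' is less obvious than your phrasing suggests, and finding it is the actual content of the proof.
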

In fact, Ontaneda has proved that every flat manifold bounds geometrically a negatively pinched Riemannian manifold \cite{O}, but Long and Reid showed that at least two among the six orientable flat $3$-manifolds cannot bound a hyperbolic manifold \cite{LR}. As we said above, the $3$-torus is the first example of a connected flat manifold of dimension $n\geqslant 3$ that bounds a hyperbolic manifold.

The proof of Theorem \ref{V:intro:teo} is constructive and proceeds as follows. The \emph{ideal hyperbolic $24$-cell} $\cell$ is a well-known ideal right-angled four-dimensional hyperbolic polytope with $24$ facets and $24$ ideal vertices: each facet is a regular ideal octahedron. The $24$ facets are naturally divided into three sets of $8$ facets each, which we colour correspondingly in green, red and blue. We produce four identical copies $\cell_{11}, \cell_{12}, \cell_{21}$ and $\cell_{22}$ of $\cell$ and identify the corresponding red and blue facets as described by the pattern below:
$$
\xymatrix{ 
\cell_{11} \ar@{<->}^R[r] \ar@{<->}_B[d] & \cell_{12} \ar@{<->}^B[d] \\
\cell_{21}  \ar@{<->}_R[r] & \cell_{22}}
$$
That is, we identify the red facets ``horizontally'' and the blue facets ``vertically''.
The resulting object is a four-dimensional complete hyperbolic manifold $\block$ with non-compact geodesic boundary. The geodesic boundary is formed by the $4\times 8 = 32$ green facets which were left un-paired and has eight components, each isometric to a well-known cusped hyperbolic three-manifold: the complement of the chain link shown in Fig.~\ref{chainlink:fig}.

\begin{figure}
 \begin{center}
  \includegraphics[width = 3.5 cm]{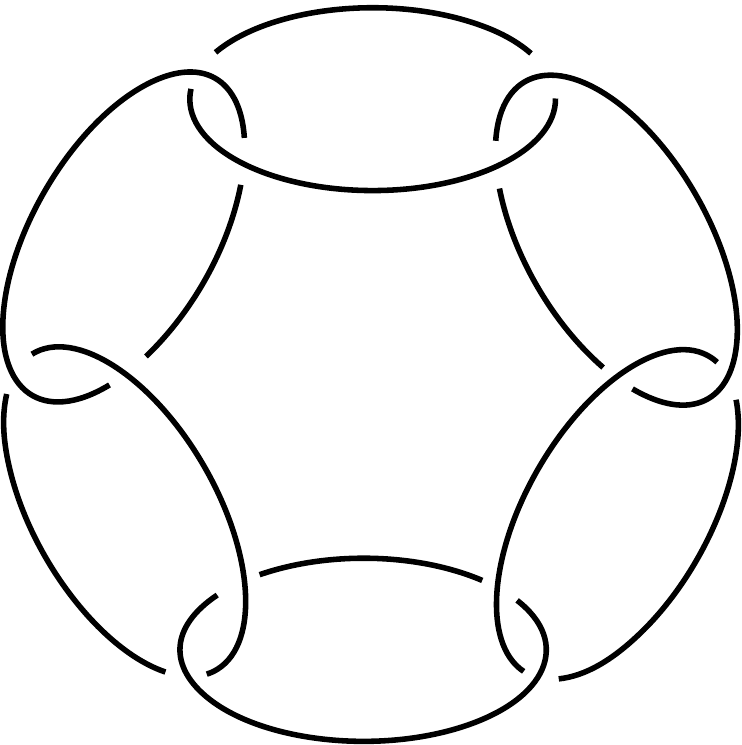}
 \end{center}
 \caption{The minimally twisted chain link with $6$ components. Its complement is tessellated by four regular ideal octahedra. The block $\block$ has eight geodesic boundary components, each isometric to this hyperbolic link complement.}
 \label{chainlink:fig}
\end{figure}

The block $\block$ has eight boundary components and $24$ cusps, each diffeomorphic to $S^1 \times S^1\times [0,1] \times [0,+\infty)$: this is a four-dimensional analogue of the annular cusps arising in dimension three. It turns out that $\block$ looks combinatorially much alike as a four-dimensional hypercube $H$: the eight boundary components correspond to the facets of $H$ and the $24$ cusps correspond to the $24$ two-dimensional faces of $H$. This combinatorial correspondence preserves all the geometric adjacencies. 

As usual, we define a four-dimensional \emph{cubulation} as the combinatorial data that consists of $n$ (four-dimensional) hypercubes and an isometric pairing of the resulting $8n$ facets. Having noticed that $\block$ looks like a hypercube, we may transform every cubulation into an orientable complete finite-volume cusped hyperbolic four-manifold by substituting every hypercube with an instance of $\block$ and glueing the geodesic boundaries as described by the combinatorics of the facet pairing. This construction was inspired by a similar algorithm introduced in \cite{CFMP}, which transforms a triangulation into a hyperbolic $3$-manifold. We have constructed a map
$$\big\{ {\rm cubulations} \big\} \longrightarrow \left\{ \begin{matrix} {\rm orientable\ complete\ finite-volume} \\ {\rm cusped\ hyperbolic\ four-manifolds} \end{matrix} \right\}.$$

Various informations on the topology and geometry of the resulting hyperbolic four-manifold $M$ can be derived directly from the cubulation. The volume of $M$ is $\frac {16 n}3 \pi^2$ where $n$ is the number of hypercubes in the cubulation. The cusps of $M$ may be recovered as follows: the facet-pairings in the cubulation induce a partition of the $24n$ two-dimensional faces into cycles, and every cycle corresponds to a cusp. Therefore, in order to construct a hyperbolic four-manifold with one cusp we only need to construct a cubulation where all two-dimensional faces become identified (shortly: a cubulation with only one $2$-face). This can be done with a single hypercube. 

The Mostow-Prasad rigidity together with the Epstein-Penner canonical decomposition \cite{E} ensure us that combinatorially distinct cubulations produce non-homeomorphic four-manifolds. In other words, the map from the set of cubulation into the set of cusped hyperbolic manifolds described above is injective. Therefore, in order to prove that $\rho_1 (V) \geqslant C^{V\ln V}$ we only need to show that the number of cubulations with $n$ hypercubes and one $2$-face grows faster than $C^{n\ln n}$.

The algorithm that transforms cubulations into hyperbolic manifolds can also be used to construct plenty of \emph{closed} Riemannian four-manifolds with non-positive sectional curvature or with Einstein metrics. It suffices to construct a cubulation where every cusp is homeomorphic to a $3$-torus (this condition is easily checked combinatorially) and then perform a Dehn filling, \emph{i.e.}~glue a copy of $D^2\times T^2$ at each cusp. The Dehn filling is encoded by a triple $(p,q,r)$ of co-prime integers, and if the triple is sufficiently complicated the resulting manifold admits a non-positively curved metric (by Gromov-Thurston's $2\pi$ theorem, see \cite{A}) and even an Einstein metric thanks to a theorem of Anderson \cite{A} that extends Thurston's Dehn filling theorem to all dimensions. 

Thus, various closed Einstein four-manifolds can be constructed on the basis of a simple combinatorial data: this can be seen as an analogue of presenting closed hyperbolic $3$-manifolds as Dehn surgeries along links in $S^3$. 

\subsection*{Structure of the paper}
We introduce the building block $\block$ in Section \ref{block:section}, and then use it in order to transform cubulations into hyperbolic four-manifolds in Section \ref{cubulations:section}. We apply this construction in Section \ref{manifolds:section} completing the proof of Theorem \ref{V:intro:teo}. Finally, we discuss Dehn fillings in Section \ref{Dehn:section}.

\section{The building block} \label{block:section}
We define the object which will play the central r\^{o}le in the sequel, namely, the building block $\block$. This is a hyperbolic four-dimensional finite-volume manifold with non-compact totally geodesic boundary. In the following section we shall use $\block$ in order to transform any cubulation into a hyperbolic four-manifold.

\subsection{The octahedral $3$-manifold} \label{O:subsection}
Let us start with the description of a cusped hyperbolic $3$-manifold whose eight disjoint isometric copies will form the boundary of the block $\block$.

Let $O$ be a regular ideal hyperbolic octahedron. Let us colour the faces of $O$ in blue and red in the chequerboard fashion (thus, every edge of $O$ is adjacent to a red and a blue triangle). Now we take four identical copies $O_{11}, O_{21}, O_{12}$ and $O_{22}$ of $O$ and pair their faces following the rules below:

\begin{itemize}
\item for $i\in \{1,2\}$ we glue each red face of $O_{i1}$ to the corresponding red face of $\cell_{i2}$;
\item for $j\in \{1,2\}$ we glue each blue face of $O_{1j}$ to the corresponding blue face of $\cell_{2j}$.
\end{itemize}
The facets are matched by identifying all the pairs of corresponding points in them by means of a hyperbolic isometry.
The rules are summarised in the following glueing diagram:
\begin{equation} \label{O:eqn}
\xymatrix{ 
O_{11} \ar@{<->}^R[r] \ar@{<->}_B[d] & O_{12} \ar@{<->}^B[d] \\
O_{21}  \ar@{<->}_R[r] & O_{22}}
\end{equation}

\begin{figure}
 \begin{center}
  \includegraphics[width = 11.5 cm]{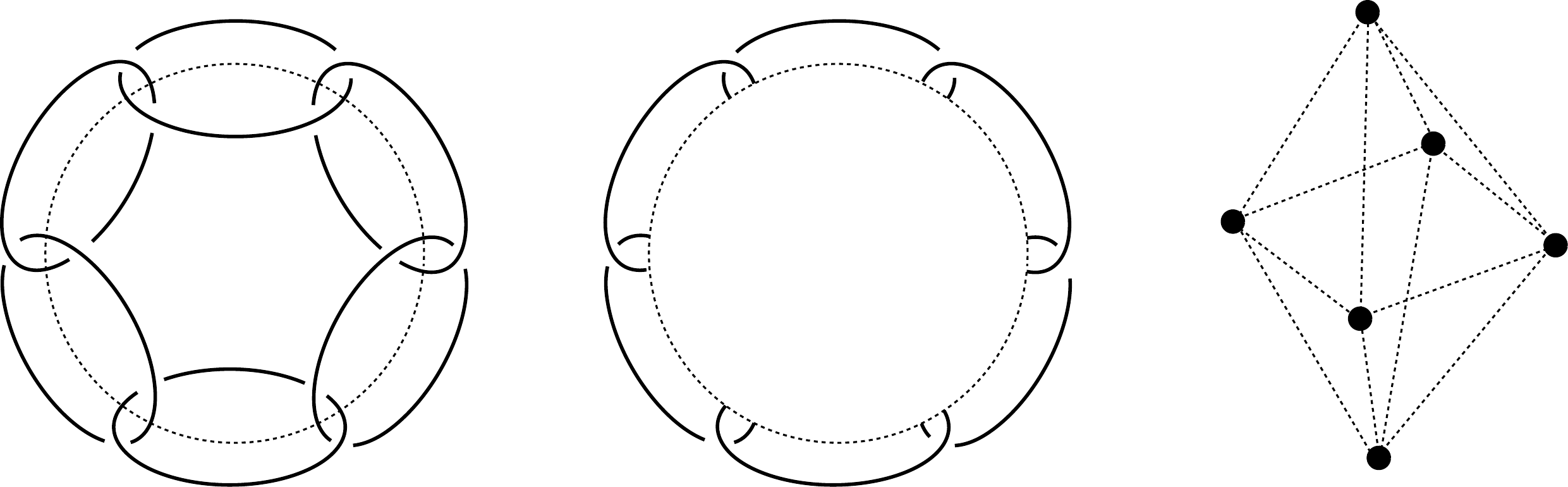}
 \end{center}
 \caption{The minimally twisted chain link with 6 components (left) is symmetric with respect to a $\pi$-rotation $\iota$ about the dotted circle. This symmetry quotients the hyperbolic link complement $N$ down to the octahedral orbifold (right), obtained from the central picture by contracting each solid arc to a vertex. All the edges in the orbifold on the right have index $2$.}
 \label{hexatangle:fig}
\end{figure}

This glueing clearly gives rise to a hyperbolic $3$-manifold $N$, since the dihedral angle along each edge in $O$ equals $\frac \pi 2$ and the edges are assembled into sets of four elements each. We call $N$ the \emph{octahedral manifold}. We may define an involution $\iota$ by interchanging $O_{11}$ with $O_{22}$ and $O_{12}$ with $O_{21}$. The quotient orbifold $N/_\iota$ may be then described as
$$
\xymatrix{ 
O_{11} \ar@<1ex>@{<->}[r]^R \ar@<-1ex>@{<->}_B[r] & O_{12}. 
}
$$
It is tessellated by two isometric octahedra, with all the corresponding faces identified. Therefore $N/_\iota$ is the octahedral orbifold shown in Fig.~\ref{hexatangle:fig}-(right) with base space $S^3$ and singular locus the $1$-skeleton of an octahedron (all of its edges are labelled with the index $2$). The manifold $N$ is a double cover of that orbifold, and Fig.~\ref{hexatangle:fig} shows that $N$ is homeomorphic to the complement of the minimally twisted six chain link. 

\begin{figure}
 \begin{center}
  \includegraphics[width = 3 cm]{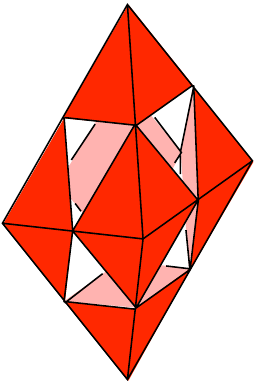}
 \end{center}
 \caption{The regular ideal octahedron has a maximal horocusp section that consists of six Euclidean unit squares.}
 \label{cusp_section2:fig}
\end{figure}

The regular ideal octahedron $O$ has a maximal horocusp section that consists of six Euclidean unit squares, see Fig.~\ref{cusp_section2:fig}. The maximal horocusp sections of $O_{11}, O_{12}, O_{21}$ and $O_{22}$ glue together up to a maximal horocusp section of $N$ that consists of six flat tori, each isometric to the square torus $T$ of area $4$, constructed by identifying the opposite edges of a $2\times 2$ square. The six cusps of $N$ are in a natural $1-1$ correspondence with the ideal vertices of the octahedron $O$. 

Let $H$ (resp.~$V$) be the isometry of $N$ that interchanges $O_{i1}$ with $O_{i2}$ (resp.~$O_{1j}$ with $O_{2j}$) for all $i$ (resp.~$j$). The isometries $H$ and $V$ are orientation-reversing and we have $\iota = H\cdot V = V\cdot H$.

\begin{prop} \label{N:prop}
Every isometry of $N$ induces an isometry of $O$. Moreover, there is the following exact sequence
$$0 \longrightarrow \matZ_2 \oplus \matZ_2 \longrightarrow \Iso(N) \longrightarrow \Iso(O) \longrightarrow 0,$$ 
where $\matZ_2 \oplus \matZ_2$ is generated by $H$ and $V$.
\end{prop}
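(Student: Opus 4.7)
The plan is to define a natural homomorphism $\pi:\Iso(N)\to\Iso(O)$ induced by the action of an isometry on the canonical decomposition of $N$ into the four octahedra $O_{ij}$, and then to analyse surjectivity and kernel separately. First I would observe that this decomposition coincides with the Epstein--Penner canonical decomposition of $N$: the six cusps admit maximal horoball neighbourhoods all isometric to the same square flat torus of area $4$, so the canonical decomposition is forced to be the one assembling the four regular ideal octahedra. Consequently every $f\in\Iso(N)$ permutes the $O_{ij}$ via some $\sigma$ and restricts to isometries $f|_{O_{ij}}:O_{ij}\to O_{\sigma(ij)}$.

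Next, fix colour-preserving isometric identifications $\psi_{ij}:O\to O_{ij}$ chosen so that every gluing of corresponding red or blue faces becomes the identity when read through the $\psi_{ij}$. For $f\in\Iso(N)$ set $\tilde{f}_{ij}:=\psi_{\sigma(ij)}^{-1}\circ f|_{O_{ij}}\circ\psi_{ij}\in\Iso(O)$. The key technical step is to verify that $\tilde{f}_{ij}$ is independent of $ij$: if $O_{ij}$ and $O_{i'j'}$ are glued along a face $F$, consistency of $f$ with the identity-gluing forces $\tilde{f}_{ij}$ and $\tilde{f}_{i'j'}$ to agree on the facet of $O$ corresponding to $F$; since an isometry of $O$ is determined by its restriction to any single face and the dual graph of the four octahedra is connected, all $\tilde{f}_{ij}$ coincide. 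Set $\pi(f):=\tilde{f}_{11}$.

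For surjectivity, given $g\in\Iso(O)$ I would extend it to $N$ in one of two ways according to whether $g$ preserves or swaps the chequerboard colouring. In the colour-preserving case, set $f|_{O_{ij}}:=\psi_{ij}\circ g\circ\psi_{ij}^{-1}$ on each $O_{ij}$ separately; in the colour-swapping case, additionally transpose indices via $(i,j)\mapsto(j,i)$, i.e.\ set $f|_{O_{ij}}:=\psi_{ji}\circ g\circ\psi_{ij}^{-1}$. Transposition interchanges the red-glued pairs $\{O_{11},O_{12}\},\{O_{21},O_{22}\}$ with the blue-glued pairs $\{O_{11},O_{21}\},\{O_{12},O_{22}\}$, exactly matching the fact that $g$ swaps colours, so both constructions yield well-defined isometries with $\pi(f)=g$. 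Finally, if $f\in\ker\pi$, then $\tilde{f}_{ij}=\mathrm{id}$ for all $ij$, so $f$ is determined by its underlying permutation $\sigma$; compatibility with the red and blue gluings forces $\sigma$ to preserve both transverse pair partitions simultaneously, and the permutations of a four-element set doing so are exactly the Klein four-group generated by $H:(i,j)\mapsto(i,3-j)$ and $V:(i,j)\mapsto(3-i,j)$. Hence $\ker\pi=\langle H,V\rangle\cong\matZ_2\oplus\matZ_2$. The main obstacle is the well-definedness of $\pi$; once the $\psi_{ij}$ have been normalised so that the gluings become the identity, everything else reduces to bookkeeping on the dual graph of the octahedral decomposition.
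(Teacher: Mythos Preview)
Your proposal is correct and follows essentially the same strategy as the paper: invoke the Epstein--Penner decomposition to see that isometries permute the four octahedra, define the map to $\Iso(O)$ via the induced action, lift colour-preserving and colour-swapping isometries of $O$ separately for surjectivity, and identify the kernel with the symmetries of the square diagram that respect both the red and blue pairings. Your treatment is in fact more careful than the paper's in making the homomorphism $\pi$ well-defined; the only cosmetic difference is that in the colour-swapping case you use the transposition $(i,j)\mapsto(j,i)$ whereas the paper swaps $O_{11}\leftrightarrow O_{22}$, but these two lifts differ by an element of the kernel and both work.
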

\begin{proof}
The four octahedra form the canonical Epstein-Penner decomposition \cite[Theorem~3.6]{E} of $N$ and therefore are preserved by any isometry. On the other hand, every isometry $\varphi$ of $O$ is realised by an isometry of $N$: if $\varphi$ preserves the red-blue colouring, then it suffices to act by $\varphi$ on each $O_{ij}$, if it inverts the colouring then we act by $\varphi$ on each $O_{ij}$ and then exchange $O_{11}$ with $O_{22}$, in order to invert the colourings in the square diagram (\ref{O:eqn}), so that the resulting isometry is well-defined on $N$.

The kernel of the surjective map $\Iso(N) \to \Iso(O)$ consists of all isometries of $N$ that fix the cusps. These are naturally identified with the symmetries of the square (\ref{O:eqn}) that preserve the red-blue colouring. Indeed, the kernel is the $\matZ_2\oplus \matZ_2$ group generated by $H$ and $V$.
\end{proof}

\begin{cor} The involution $\iota$ is the unique orientation-preserving non-trivial isometry of $N$ that fixes the cusps.
\end{cor}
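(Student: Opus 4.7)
The plan is to read this off directly from Proposition \ref{N:prop}. An orientation-preserving isometry of $N$ that fixes each cusp must project to an isometry of $O$ fixing every ideal vertex of $O$; since the regular octahedron's isometry group acts faithfully on the vertex set, this projected isometry is trivial. Hence any isometry of $N$ fixing the cusps lies in the kernel of the surjection $\Iso(N)\to \Iso(O)$, which by the proposition is the Klein four-group $\matZ_2\oplus\matZ_2 = \{1,H,V,\iota\}$ generated by $H$ and $V$.

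Next I would identify which of the three non-trivial elements $H$, $V$, $\iota$ preserve orientation. The paper has just observed that $H$ and $V$ are orientation-reversing, so they are ruled out. On the other hand $\iota = H\cdot V$ is a composition of two orientation-reversing isometries and therefore preserves orientation. Thus $\iota$ is the only candidate, and it is indeed non-trivial, proving uniqueness.

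I do not expect any genuine obstacle here: the corollary is a clean bookkeeping consequence of the short exact sequence in Proposition \ref{N:prop} together with the orientation information already recorded for $H$ and $V$. The only point that deserves a line of justification is the identification of \emph{cusp-fixing} isometries with the kernel of $\Iso(N)\to\Iso(O)$, which follows from the canonical Epstein--Penner bijection between the cusps of $N$ and the ideal vertices of $O$ and the fact that $\Iso(O)$ acts faithfully on those vertices.
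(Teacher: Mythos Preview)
Your argument is correct and is exactly the reasoning the paper has in mind: the corollary is stated without proof, as an immediate consequence of Proposition~\ref{N:prop} together with the observation (made just before the proposition) that $H$ and $V$ are orientation-reversing. One minor simplification: the proof of Proposition~\ref{N:prop} already states explicitly that the kernel of $\Iso(N)\to\Iso(O)$ ``consists of all isometries of $N$ that fix the cusps,'' so you may cite that directly rather than re-derive it via faithfulness of the $\Iso(O)$-action on vertices.
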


The action of $\iota$ on each cusp is non-trivial: the cusp shape is a square torus, and $\iota$ acts like a reflection with respect to the centre of the square. Therefore $\iota$ acts as an elliptic involution, whose effect on the homology is multiplication by $-1$.

Let $T\subset M$ be a torus inside an orientable closed three-manifold. The operation of cutting $M$ along $T$ and re-glueing back using an elliptic involution is sometimes called a \emph{mutation}: the result of this operation is a new orientable three-manifold, which often is not homeomorphic to $M$. Here we introduce a similar operation for hyperbolic four-manifolds.

\begin{defn} \label{mutation:defn}
Let $M$ be an orientable hyperbolic four-manifold which contains a three-dimensional geodesic sub-manifold $N$ isometric to the octahedral manifold. Let us call a \emph{mutation} of $M$ along $N$ the operation of cutting $M$ along $N$ and re-glueing it back via the involutary isometry $\iota$. The result of this operation is a hyperbolic four-manifold, which is typically non-homeomorphic to the initial one.
\end{defn}

\begin{rem}
An embedded cusp section $X$ of $M$ is a collection of three-dimensional flat manifolds that intersect the geodesic three-manifold $N$ along six flat tori. A mutation of $M$ along $N$ changes the cusp section $X$ via mutation along these tori, because $\iota$ acts on the cusps like an elliptic involution.
\end{rem}

\subsection{The $24$-cell} \label{euclidean:cell:subsection}
The \emph{24-cell} $\cell$ is the only regular polytope in all dimensions $n\geqslant 3$ which is self-dual and not a simplex. It may be defined as the convex hull 
$$\cell = \Conv (V)$$
of the set $V$ that contains $24$ points in $\matR^4$ obtained by permuting the coordinates of
$$\left(\pm 1, \pm 1, 0, 0 \right).$$
These $24$ points are the vertices of $\cell$. One checks easily that $\cell$ has $24$ facets, whose affine supporting hyperplanes are
$$\{\pm x_i = 1\}, \qquad \left\{\pm \frac {x_1}2 \pm \frac{x_2}2 \pm \frac{x_3}2 \pm \frac{x_4}2 = 1 \right\}.$$
Each facet is a regular octahedron. The dual polytope $\cell^*$ is therefore the convex hull 
$$\cell^* = \Conv(G\cup R \cup B)$$
where $G$ contains the $8$ points obtained by permuting the coordinates of
$$(\pm 1, 0, 0, 0)$$
and $R\cup B$ contains $16$ points of the form
$$\left(\pm \frac 12, \pm \frac 12, \pm \frac 12, \pm \frac 12\right).$$
It is convenient to partition the latter set into $R\sqcup B$ where $R$ (resp.~$B$) is the subset of $8$ points having an even (resp.~odd) number of minus signs. 

The facets of $\cell$ are regular octahedra in a $1-1$ correspondence with the vertices $G\cup R \cup B$ of $\cell^*$ and we colour them accordingly in green, red and blue. This three-colouring of $\cell$ is indeed natural: one can prove that every symmetry of $\cell$ induces a permutation of the sets $G, R$ and $B$, and vice versa every permutation may be realised in this way. 

Another fact worth mentioning is that  $\Conv (R\cup B)$ represents a hypercube and $\Conv(G)$ corresponds to its dual $16$-cell, also the same is true after permuting the sets $R$, $B$ and $G$.

The $24$-cell $\cell$ is self-dual, \emph{i.e.}~it has a homothetic dual $\cell^*$. Thus $\cell$ has $24$ facets, $96$ triangular two-dimensional faces, $96$ edges and $24$ vertices. A facet is a regular octahedron, and (in accordance with the self-duality) each vertex figure is a cube.

\subsection{The hypercube}
Let us consider the hypercube
$$H = [-1,1]^4.$$
We have already noticed that the barycentre of a facet in $H$ is a point in $G$. Thus we get a natural $1-1$ correspondence
$$\big\{{\rm facets\ of \ } H\big\} \longleftrightarrow G \longleftrightarrow \big\{ {\rm green\ facets\ of\ } \cell \big\}.$$
The vertices in $V$ are precisely the barycentres of the $2$-dimensional faces in $H$, and thus we get one more $1-1$ correspondence
$$\big\{{\rm 2-faces\ of \ } H\big\} \longleftrightarrow V \longleftrightarrow \big\{ {\rm vertices\ of\ } \cell \big\}.$$
 
Said in a single phrase, the $24$-cell with its green facets and its vertices looks like a hypercube with its (cubic) facets and its (square) $2$-faces. This analogy is the core of our construction.
 
Restricted to the facets, this analogy is just the duality of polyhedra: an octahedral green facet of $\cell$ is dual to a cubic facet of $H$, both contained in the same affine hyperplane $x_i = \pm 1$. The duality map sends the vertices of the octahedron to the square faces of the cube.
 
\begin{rem} Although we will not use it here, we mention that the analogy extends to all the strata of $H$, as follows:
$$\big\{{\rm vertices\ of \ } H\big\} \longleftrightarrow B\cup R \longleftrightarrow \big\{ {\rm blue\ and\ red\ facets\ of\ } \cell \big\}.$$
$$\big\{{\rm edges\ of \ } H\big\} \longleftrightarrow 
\left\{ \begin{matrix}{\rm triangular\ 2-faces\ of\ } \cell \\ {\rm separating\ blue\ and\ red\ facets} \end{matrix}\right\}.$$
\end{rem}

\subsection{The regular ideal hyperbolic $24$-cell}
Every $n$-dimensional regular polytope $P$ has a hyperbolic ideal presentation obtained by normalising the coordinates of its vertices so that they lie on the unit sphere $S^{n-1}$ and by interpreting $S^{n-1}$ as the ideal boundary of $\matH^n$ in Klein's ball model. 

Note that the vertex figure of an ideal vertex is a Euclidean regular $(n-1)$-dimensional polytope in some horosphere, whose dihedral angles coincide with the dihedral angles of $P$. For that reason, the ideal regular hyperbolic tetrahedron, cube, octahedron and dodecahedron have Coxeter dihedral angles $\frac{\pi}3$, $\frac{\pi}3$, $\frac{\pi}2$ and $\frac \pi 3$, respectively, since their vertex figures are either equilateral triangles or squares. 

In what follows we keep denoting by $\cell$ the ideal regular hyperbolic $24$-cell obtained from the Euclidean 24-cell $\cell$ defined in Section \ref{euclidean:cell:subsection} in the way described above. The vertex figure of $\cell$ is a Euclidean cube and therefore $\cell$ has all dihedral angles $\frac \pi 2$. The $24$-cell is the unique regular four-dimensional polytope having cubical vertex figures: the vertex figures of the other five regular four-dimensional polytopes are other Platonic solids, and therefore their dihedral angles are not sub-multiples of $\pi$. Hence $\cell$ may be used as a building block in order to construct cusped hyperbolic $4$-manifolds, as shown by J.~Ratcliffe and S.~Tschantz \cite{RT}.

The boundary of $\cell$ consists of $24$ regular ideal hyperbolic octahedra, $96$ ideal triangular $2$-dimensional faces and $96$ geodesic edges. 

Recall that the octahedral facets of $\cell$ are coloured in green, red and blue. We now glue four isometric copies of $\cell$ together to produce a hyperbolic $4$-manifold $\block$ with totally geodesic boundary. 

\subsection{The $24$-cell block $\block$} 
In Section \ref{O:subsection} we constructed the octahedral hyperbolic $3$-manifold $N$ by glueing four copies of the regular ideal octahedron $O$ according to the diagram (\ref{O:eqn}), and by employing the bi-colouring of $O$. Now we construct the building block $\block$ from $\cell$ exactly in the same way, using the colouring of its facets.

We pick four isometric copies of $\cell$, which we denote by $\cell_{11}, \cell_{12}, \cell_{21}$ and $\cell_{22}$, then pair some of their facets as follows:
\begin{itemize}
\item for $i\in \{1,2\}$ we glue each red facet of $\cell_{i1}$ to the corresponding red facet of $\cell_{i2}$;
\item for $j\in \{1,2\}$ we glue each blue facet of $\cell_{1j}$ to the corresponding blue facet of $\cell_{2j}$.
\end{itemize}
The facets are matched by identifying all the pairs of corresponding points in them by means of a hyperbolic isometry.
We have glued together the red and the blue facets according to the same square diagram as (\ref{O:eqn}):
$$
\xymatrix{ 
\cell_{11} \ar@{<->}^R[r] \ar@{<->}_B[d] & \cell_{12} \ar@{<->}^B[d] \\
\cell_{21}  \ar@{<->}_R[r] & \cell_{22}}
$$

We denote by $\block$ the resulting topological object.
The identifications have paired all the blue and red facets: the un-paired facets of $\block$ are therefore the $4\times 8 = 32$ remaining green facets. As above, the fact that $\cell$ is right-angled should guarantee that the resulting object is a hyperbolic manifold: now we prove this in detail.

\begin{prop} \label{manifold:prop}
The space $\block$ is a hyperbolic four-manifold with totally geodesic boundary. 
\end{prop}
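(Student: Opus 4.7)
The plan is to verify, in order, that (i) the hyperbolic metric extends smoothly across the identified facets, (ii) the combinatorial link at every stratum of $\block$ is the expected sphere or disk, and (iii) the unpaired green facets form a totally geodesic boundary. Step (i) is immediate: the red facets of $\cell$ are pairwise isometric regular ideal octahedra (and likewise the blue facets), so the required facet pairings can be realised by hyperbolic isometries and the metric extends continuously across each identification.

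The main step is the link analysis at each triangular $2$-face. I would first establish a combinatorial lemma: no two facets of $\cell$ of the same colour are adjacent. This follows from the coordinate description in Section~\ref{euclidean:cell:subsection}, since two facets of $\cell$ are adjacent iff their outward unit normals have inner product $\frac{1}{2}$, and a brief check on the normals $\pm e_i$ (green) and $\frac{1}{2}(\pm 1,\pm 1,\pm 1,\pm 1)$ (red or blue) rules out same-colour pairs. Consequently every $2$-face is flanked by facets of two distinct colours, giving three types GR, GB, RB. For an RB $2$-face, crossing the red face swaps the second index of $\cell_{ij}$ and crossing the blue face swaps the first, so the four copies cycle $\cell_{11}\to\cell_{12}\to\cell_{22}\to\cell_{21}\to\cell_{11}$ around the $2$-face, and the four right dihedral angles sum to $2\pi$: an interior manifold point. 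For a GR or GB $2$-face only two copies meet via the single coloured pairing, the two angles sum to $\pi$, and the two remaining green half-octahedra assemble the neighbourhood into a hyperbolic half-space: this is a totally geodesic boundary point.

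The link conditions at edges and ideal vertices then fall out of the $\matZ_2\oplus\matZ_2$ gluing pattern once the $2$-face cycles are understood. Around each edge of $\cell$ exactly three facets meet (the vertex figure is a cube), and the non-adjacency lemma forces these to be coloured G, R, B; glueing the four spherical-triangle links from the $\cell_{ij}$'s along the identified $2$-faces produces a $2$-complex with $5$ vertices, $8$ edges and $4$ triangles, hence Euler characteristic $1$, whose boundary is a $4$-cycle of unpaired green edges, so the link is a disk. Each vertex of $\cell$ contains exactly two facets of each colour, so every ideal vertex lies on $\partial\block$, and its horospherical cross-section is a Euclidean $3$-manifold with boundary obtained by glueing four cubes via the same $\matZ_2\oplus\matZ_2$ pattern, giving a $3$-ball vertex link and a flat cusp cross-section with flat geodesic boundary.

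The main obstacle is the $2$-face case analysis, which hinges entirely on the no-same-colour-adjacency fact; all other link verifications are combinatorial consequences of the $\matZ_2\oplus\matZ_2$ gluing pattern. Total geodesicity of $\partial\block$ is then immediate, since each unpaired green facet is a geodesic ideal octahedron of $\cell$ meeting every neighbouring interior $2$-face at a right angle.
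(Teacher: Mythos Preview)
Your argument is correct and complete, but it proceeds along a different line from the paper's proof. You carry out a stratum-by-stratum combinatorial link analysis: the key ingredient is the lemma that no two same-coloured facets of $\cell$ are adjacent, which forces every triangular $2$-face to be of type GR, GB or RB and hence forces the dihedral-angle sums to be $\pi$ (boundary) or $2\pi$ (interior). You then verify the edge links explicitly via an Euler-characteristic count. The paper instead gives a single unified argument: for any point $x$ on an edge or $2$-face, send an incident ideal vertex to $\infty$ in the upper half-space model, so that the horosphere through $x$ meets each $\cell_{ij}$ in (a piece of) a unit cube with opposite faces coloured G, R, B; the four cubes glue via the $\matZ_2\oplus\matZ_2$ pattern to the flat manifold $T\times[0,1]$, and hence $x$ sits inside a hyperbolic cusp modelled on $T\times[0,1]\times[0,\infty)$. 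This horospherical trick handles all codimensions simultaneously and avoids both the colour-adjacency lemma and the separate edge-link computation. Your approach, by contrast, is more explicitly combinatorial and makes the boundary/interior dichotomy at each $2$-face completely transparent. One small wording issue: for an ideal vertex there is no ``$3$-ball vertex link'' in the usual sense; what you actually need (and what you correctly describe) is that the horospherical cross-section is the flat manifold-with-boundary $T\times[0,1]$, which is exactly what the paper computes.
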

\begin{proof}
The building block $\block$ is obtained from $\cell_{11}, \cell_{21}, \cell_{12}$ and $\cell_{22}$ by an isometric glueing of some pairs of their facets. Let us consider $\block$ as a cell complex. Then the $4$- and $3$-dimensional strata of $\block$ clearly have a hyperbolic structure (with geodesic boundary). We need to check that the structure extends to any point $x$ lying in the $1$- or $2$-dimensional stratum $S$, \emph{i.e.}~on an edge or in an ideal triangle.

We may suppose that $x$ lies in a boundary edge or a triangle $S$ of $\cell_{11}$. Let us represent $\cell_{11}$ in the upper half-space model for $\matH^4$ and send one of the ideal vertices of $S$ to the infinity. Let $U_x$ be a horizontal horosphere passing through $x$. Suppose for now that the fourth coordinate $x_4$ of $x$ is big enough, so that the intersection $U_x\cap \cell_{11}$ is a Euclidean cube $C_{11}$. The point $x$ is either a vertex of or contained in an edge of $C_{11}$.

The faces of the cube $C_{11}$ are coloured in green, red and blue according to the colours of the facets of $\cell_{11}$ they are contained in. Opposite faces share the same colour. The block $\block$ contains four isometric copies $C_{11}, C_{12}, C_{21}, C_{22}$ of this cube, which are glued according to the same pattern as above:
$$
\xymatrix{ 
C_{11} \ar@{<->}^R[r] \ar@{<->}_B[d] & C_{12} \ar@{<->}^B[d] \\
C_{21}  \ar@{<->}_R[r] & C_{22}}
$$
These cubes form a flat manifold in $\block$, which is isometric to $T \times [0,1]$, where $T$ is the Euclidean square torus of area $4$, obtained by identifying the opposite sides of a $2\times 2$ square. Therefore $x$ is contained in a hyperbolic cusp based on that flat manifold: the hyperbolic structure clearly extends to $x$. If $x_4$ is arbitrary, then the intersection $\block \cap U_x$ is only a subset of a cube, but still it looks geometrically like a piece of a cube near $x$ and hence the same argument applies.
\end{proof}

During the proof we have described the cusps of $\block$: they are of the type $T\times [0,1] \times [0,+\infty)$, and they are $24$ in total, one for each ideal vertex of $\cell$. We will return to that later: first we describe the totally geodesic boundary of $\block$.

\begin{prop}
The block $\block$ has eight totally geodesic boundary components, each isometric to the octahedral $3$-manifold.
\end{prop}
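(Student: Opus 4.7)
My plan is to exhibit an explicit decomposition of $\partial \block$ into $32$ green octahedra and to show that they assemble into eight isometric copies of the octahedral manifold $N$ from Section \ref{O:subsection}, following exactly the pattern that defines $N$ from its four copies of $O$.

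The first step is a combinatorial observation inside a single copy of $\cell$: no two green facets share a triangular $2$-face. Inspecting the facet equations $\{\pm x_i = 1\}$ (green) and $\{\pm x_1/2 \pm x_2/2 \pm x_3/2 \pm x_4/2 = 1\}$ (red/blue), one sees that the eight $2$-faces of the green facet $G=\{x_i=\epsilon\}$ are each shared with one of the eight red/blue facets of the form $\{\epsilon x_i/2 + \sum_{j\ne i}\epsilon_j x_j/2 = 1\}$. This equips each green octahedron with a natural $2$-colouring of its triangular faces (red/blue according to the colour of the neighbouring facet, equivalently according to the parity of the number of minus signs in $(\epsilon_j)_{j\ne i}$), and since adjacent $2$-faces differ by a single sign flip, this is precisely the chequerboard colouring of $O$ used in Section \ref{O:subsection}.

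Next, I count and match the $32$ boundary octahedra of $\block$. For each of the $8$ green facets $G$ of $\cell$ there are four copies $G_{11}, G_{12}, G_{21}, G_{22}$, one in each $\cell_{ij}$. Green facets are never glued in $\block$, but the red/blue facet identifications induce identifications of their triangular $2$-faces: a red triangle of $G_{i1}$ is glued to the corresponding red triangle of $G_{i2}$ through the red-facet identification $\cell_{i1}\leftrightarrow \cell_{i2}$, and similarly a blue triangle of $G_{1j}$ is identified with the corresponding blue triangle of $G_{2j}$. Because different green facets of $\cell$ do not share any triangular $2$-face, the $32$ octahedra split into eight disjoint groups of four that are never mixed. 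Within each group the induced glueing diagram is exactly (\ref{O:eqn}), the chequerboard colouring on $G$ coincides with the one on $O$, and each identification is a hyperbolic isometry matching corresponding points; hence each group assembles isometrically into a copy of $N$.

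Finally, I need total geodesicity. Each green facet $G\subset \cell_{ij}$ lies in a hyperbolic hyperplane, so it is totally geodesic in the ambient copy of $\cell$. Whenever we cross a triangular $2$-face shared by $G_{i_1j_1}$ and $G_{i_2j_2}$ through a red/blue facet identification, both green octahedra meet that red/blue facet at the internal right dihedral angle of $\cell$; the isometric glueing then lines the two perpendicular geodesic slabs up across the triangle into a single geodesic $3$-plane, so the resulting $3$-manifold is totally geodesic everywhere. The main obstacle is essentially bookkeeping: one has to confirm that the indices $(i,j)$ on $G_{ij}$ match up coherently with those on the $O_{ij}$ under the red/blue facet identifications, so that the resulting diagram is exactly (\ref{O:eqn}) rather than a twisted variant.
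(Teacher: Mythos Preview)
Your proof is correct and follows essentially the same approach as the paper's: identify, for each green facet $G$ of $\cell$, the four copies $G_{11},G_{12},G_{21},G_{22}$ in $\partial\block$ and observe that the red/blue facet identifications glue them exactly according to diagram~(\ref{O:eqn}), yielding a copy of $N$. The paper's proof is a two-line sketch; you have filled in the details it leaves implicit (that green facets are pairwise non-adjacent, that the induced colouring on each green octahedron is the chequerboard one, and that the eight groups of four octahedra do not interact). Your discussion of total geodesicity is redundant here, since that was already established in Proposition~\ref{manifold:prop}.
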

\begin{proof}
A green octahedral facet $O$ of $\cell$ gives rise to four regular ideal octahedra $O_{11}, O_{12}, O_{21}$ and $O_{22}$ glued together in accordance with the square diagram (\ref{O:eqn}), forming an octahedral $3$-manifold. The eight green facets of $\cell$ produce eight such manifolds.
\end{proof}

The eight boundary components naturally correspond  to the green facets of $\cell$. There is a sequence of $1-1$ correspondences (recall that $H$ is the hypercube):
$$\big\{{\rm facets\ of \ } H\big\} \longleftrightarrow G \longleftrightarrow 
\big\{{\rm green\ facets\ of\ } \cell \big\} \longleftrightarrow 
\left\{\!\!\! \begin{array}{c} 
{\rm (geodesic)\ boundary} \\ {\rm components\ of\ } \block \end{array} \!\!\! \right\}$$

\subsection{The maximal cusp section}
Finally, we describe the cusps of $\block$. As an ideal regular polytope, the $24$-cell $\cell$ has a maximal horosection which meets every boundary octahedron also in a maximal (two-dimensional) horosection. The maximal horosection of an ideal regular octahedron clearly consists of six unit squares. Hence the maximal horosection of $\cell$ comprises a unit Euclidean cube $C$ for each vertex $v$ of $\cell$. 

The faces of $C$ inherit the colour of the facets of $\cell$ that they are contained in: every cube is hence coloured in green, red or blue, with opposite faces sharing the same colour. The block $\block$ contains four isometric copies $C_{11}, C_{12}, C_{21}$ and $C_{22}$ of  $C$, which are glued in accordance to the usual pattern:
$$
\xymatrix{ 
C_{11} \ar@{<->}^R[r] \ar@{<->}_B[d] & C_{12} \ar@{<->}^B[d] \\
C_{21}  \ar@{<->}_R[r] & C_{22}}
$$
These four cubes glue together in order to form the flat manifold $T \times [0,1]$ where $T$ is the Euclidean torus obtained by identifying the opposite sides of a $2\times 2$ square. The maximal horosection of $\cell$ then gives rise to a maximal horosection of $\block$ made of $24$ components (one for each vertex of $\cell$), each isometric to $T\times [0,1]$.

A horosection isometric to $T\times [0,1]$ bounds a \emph{toric cusp} homeomorphic to $T\times [0,1]\times [0,+\infty)$: this is a four-dimensional analogue to the annular cusps that one may find in hyperbolic $3$-manifolds with totally geodesic boundary, see fig.~\ref{toric_cusp:fig}. 

\begin{figure}
 \begin{center}
  \includegraphics[width = 12 cm]{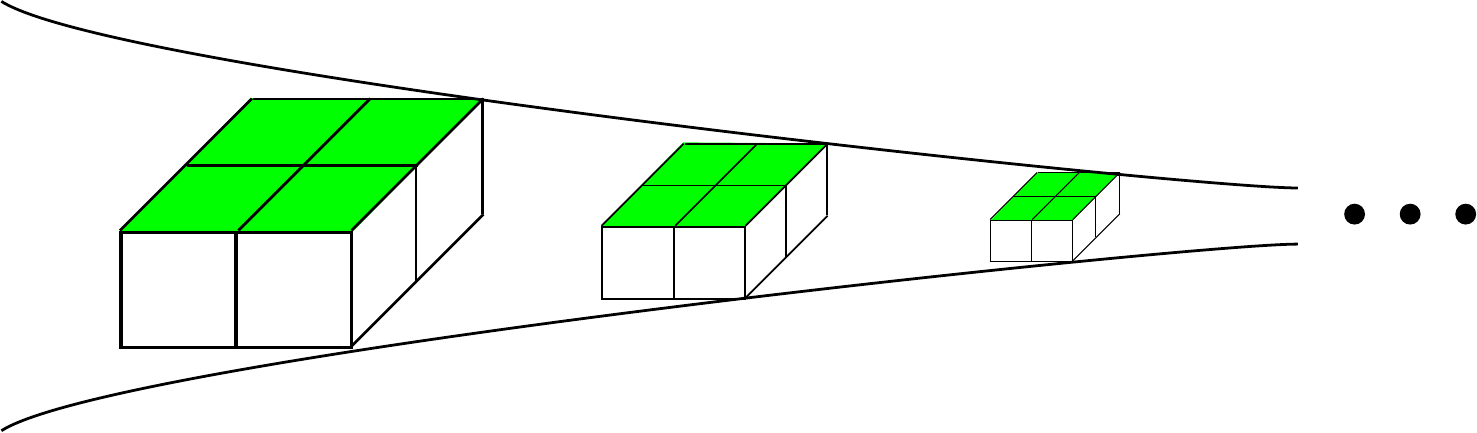}
 \end{center}
 \caption{A toric cusp in $\block$. Every flat section is $T\times [0,1]$, up to a similarity, where $T$ is the square torus obtained by identifying the opposite sides of a $2\times 2$ square. The flat section is tessellated by four cubes: these are four copies of the same vertex cube in $\cell$, glued following the pattern as in (\ref{O:eqn}).}
 \label{toric_cusp:fig}
\end{figure}

Thus, concerning the geodesic boundary components, we have found a sequence of natural $1-1$ correspondences:
$$\big\{{\rm 2-faces\ of \ } H\big\} \longleftrightarrow V \longleftrightarrow \big\{ {\rm vertices\ of\ } \cell \big\}  \longleftrightarrow \big\{ {\rm (toric)\ cusps\ of\ } \block \big\} .$$

Summing up, the block $\block$ looks combinatorially like the hypercube $H$, with $8$ geodesic boundary components corresponding to the facets of $H$, and $24$ toric cusps corresponding to the $2$-faces of $H$. (The vertices and edges of $H$ do not play a r\^{o}le here.) The correspondence is easily described on the facets: a geodesic boundary component $N$ is made of four copies of an octahedral green facet $O$ of $\cell$, which is dual to the corresponding cubic facet $C$ of $H$. The six cusps of $N$ correspond to the six vertices of $O$ and hence, by duality, to the square faces of $C$.

The picture is similar to the one given in \cite{CFMP} where the authors constructed a $3$-dimensional block combinatorially equivalent to a tetrahedron. In that paper this correspondence was used to transform any $3$-dimensional triangulation into an orientable hyperbolic cusped $3$-manifold. Here we perform an analogous construction, using hypercubes instead of tetrahedra.

\section{Cubulations} \label{cubulations:section}
We now construct orientable hyperbolic four-manifolds by glueing several copies of the block $\block$ along their totally geodesic boundaries. The combinatorial tool which is best suited to describe this procedure is a cubulation.

\subsection{The construction} \label{construction:subsection}
A \emph{combinatorial four-dimensional cubulation} (here for short, a \emph{cubulation}) is a data that consists of $n$ copies $H_1, \ldots, H_n$ of the standard hypercube $H$ together with an isometric pairing of the given $8n$ facets. An isometric pairing is a partition of the $8n$ facets into pairs, together with a Euclidean isometry between the two cubes in each pair. 

Here we show how a cubulation determines a finite-volume cusped orientable hyperbolic four-manifold, unique up to some well-understood mutations (recall Definition \ref{mutation:defn}). Note that we make no requirements on the cubulation: the topological space obtained by glueing the hypercubes $H_1,\ldots, H_n$ does not need to be a manifold. However, we always assume that it is connected.

First, we pick $n$ isometric copies $\block_1, \ldots, \block_n$ of the block $\block$.
Recall that there is a $1-1$ correspondence between the facets (resp., $2$-faces) of $H$ and the boundary components (resp., cusps) of $\block$. For every isometric pairing $\varphi\colon C^1 \to C^2$ of two cubic facets we construct an isometry $\varphi_*\colon N^1 \to N^2$ between the corresponding geodesic boundary components. Recall that $N^h$ is made of four copies $O_{ij}^h$ of an ideal octahedron $O^h$ naturally dual to the cube $C^h$, for every $h=1,2$. The isometry $\varphi$ defines an isometry $\varphi\colon O^1 \to O^2$ according to the duality.

By Proposition \ref{N:prop}, there are precisely four distinct choices for $\varphi_*$.
Let $\varphi_*^{ij}\colon N^1 \to N^2$ be the isometry that sends $O^1_{11}$ to $O^2_{ij}$ via $\varphi$ and extends (uniquely) to the whole of $N^1$. The four choices are 
$$\varphi_*^{11}, \quad \varphi_*^{12} = H\circ \varphi_*^{11}, \quad \varphi_*^{21} = V\circ \varphi_*^{11},  \quad \varphi_*^{22} = \iota \circ \varphi_*^{11}.$$
Amongst these isometries, two are orientation-preserving and the other two are orientation-reversing; the isometries $\varphi_*^{11}$ and $\varphi_*^{22}$ are of the same type and differ only by the involution $\iota$, as are the isometries $\varphi_*^{12}$ and $\varphi_*^{21}$.
The following lemma follows from the construction.

\begin{lemma} \label{orientation:lemma}
The map $\varphi_*^{11}$ is orientation-reversing if and only if $\varphi$ is.
\end{lemma}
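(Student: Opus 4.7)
The plan is to reduce the statement to a direct computation on a single constituent octahedron. First I would fix orientations on $N^1$ and $N^2$ compatibly with the model octahedra, namely, orient each $N^h$ so that every embedded copy $O^h_{ij}$ inherits the same orientation as the model regular octahedron $O^h$ under the tautological identification $O^h_{ij} \cong O^h$. This is consistent because the pairings in diagram (\ref{O:eqn}) are applied face-by-face as the identity of $O^h$ on corresponding coloured faces; such an identification between two isometric copies of $O^h$ with matching bulk orientations is automatically orientation-reversing on the shared boundary triangle, and hence the four octahedra glue to a coherently oriented $N^h$.

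Next I would invoke the cube-octahedron duality. Since $O$ sits inside $C$ with its vertices at the centres of the faces of $C$, the Euclidean isometry group of $C$ acts faithfully on $O$ through this duality, and the correspondence manifestly intertwines determinants in $O(3)$. Consequently $\varphi : C^1 \to C^2$ and the induced $\varphi : O^1 \to O^2$ have the same orientation character.

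Finally, by construction of $\varphi_*^{11}$ in Proposition \ref{N:prop}, its restriction to $O^1_{11}$ is precisely $\varphi : O^1 \to O^2$ (under the identifications $O^1_{11} = O^1$ and $O^2_{11} = O^2$). With the orientations chosen above, $\varphi_*^{11}|_{O^1_{11}}$ is orientation-reversing if and only if $\varphi$ is. But $\varphi_*^{11}$ is a global isometry between connected oriented three-manifolds, so its orientation behaviour is uniform across $N^1$, which closes the chain of equivalences.

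The only delicate point, and the one I would treat most carefully, is the compatibility of the chosen orientations on the four octahedra under the red and blue pairings of (\ref{O:eqn}): once one checks that a coloured-face identification between two identically oriented copies of $O^h$ is orientation-reversing on the glued face — a local parity computation at a single triangle — the rest of the argument is formal, resting on the uniqueness of the extension established in Proposition \ref{N:prop} and on the parity-preserving character of the cube-octahedron duality.
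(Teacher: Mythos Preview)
The paper gives no detailed argument here, asserting only that the lemma ``follows from the construction''. Your overall strategy---restrict to $O^1_{11}$, use cube--octahedron duality to compare determinants, then globalise---is the right one, but the first step contains a genuine error.

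You claim that all four octahedra $O^h_{ij}$ can carry the \emph{same} orientation inside $N^h$, arguing that the identity pairing of a coloured face between two identically oriented copies of $O^h$ is ``automatically orientation-reversing on the shared boundary triangle''. This is false: if $O_{11}$ and $O_{12}$ are identical copies of $O$ with the same bulk orientation, then the common face $F$ receives the \emph{same} boundary orientation from each side, and the identity $F\to F$ is orientation-preserving with respect to these. Consequently the glued object is \emph{not} coherently oriented with both pieces equi-oriented (this is the usual phenomenon for the double of a manifold with boundary). An independent check: the isometries $H$ and $V$ of $N$ act as the identity on each $O_{ij}$ while permuting them, yet the paper records that $H$ and $V$ are orientation-reversing; this would be impossible if all four octahedra were equi-oriented. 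The correct picture is that $O_{11},O_{22}$ carry one orientation and $O_{12},O_{21}$ the opposite.

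Fortunately your argument only ever uses the orientation on $O_{11}$, so it is salvageable. Rather than trying to orient $N^h$ intrinsically, take the boundary orientation induced from $\block$ (which is the orientation relevant to the lemma). Near $O^h_{11}$ the block $\block$ coincides with $\cell_{11}$, which by convention carries the standard orientation of $\cell$; hence $O^h_{11}\subset N^h$ carries precisely the boundary orientation of the green facet $O^h\subset\partial\cell$. Since $O^h$ and the dual cube $C^h$ lie in the same affine hyperplane with the same outward normal from $\cell$ and $H$ respectively, these boundary orientations are compatible, and your duality step (same element of $O(3)$, hence same determinant) and your restriction step then close the argument as you wrote it.
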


Since we want to obtain an orientable four-manifold, we choose one of the two orientation-reversing isometries.

The possible two choices differ exactly by composition with the involution $\iota$. That is, the resulting orientable four-manifold $M$ is uniquely determined, up to mutations along some of the $4n$ geodesic octahedral manifolds which it contains by construction.

Summing up, we have described an algorithm that transforms every cubulation into a cusped hyperbolic orientable four-manifold, well-defined up to a mutation. We now study these hyperbolic manifolds in detail.

\subsection{The hyperbolic four-manifolds}
A cubulation $C$ is an isometric pairing of the facets of $n$ hypercubes $H_1,\ldots, H_n$. Every hypercube has $8$ cubic facets and $24$ square two-dimensional faces. Every square face is contained in exactly two cubic facets and is hence identified by the pairing to two other square faces (counted with multiplicities).
 
Consider the abstract set of $24 n$ square faces and connect two of them if they are identified by some pairing: the resulting graph will be a union of \emph{cycles}. The number $k$ of the resulting cycles and the length of each depend not only on $n$, but on the combinatorial structure of the cubulation as well.

We have described a procedure that transforms the cubulation $C$ into a hyperbolic orientable four-manifold $M$. A considerable amount of information about $M$ can be derived directly from $C$, thanks to the following correspondences that follow immediately from the construction:
\begin{itemize}
\item the $n$ hypercubes in $C$ correspond to the $n$ copies of the block $\block$ in $M$; 
\item the $4n$ pairs of cubes in $C$ correspond to the totally geodesic octahedral $3$-manifolds separating two adjacent blocks;
\item the $k$ cycles of squares in $C$ correspond to the $k$ cusps of $M$.
\end{itemize}

For example, the volume formula is a direct consequence:
\begin{prop}\label{volume:prop}
We have $\chi(M) = 4n$ and $\Vol (M) = \frac {4\pi^2}3\chi (M) = \frac {16 n}3 \pi^2.$
\end{prop}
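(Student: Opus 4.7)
The plan is to reduce both statements to two classical facts: the volume of the ideal regular hyperbolic $24$-cell, and the Gauss--Bonnet--Chern formula in dimension four.

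First I would observe that the manifold $M$ is tessellated by exactly $4n$ isometric copies of $\cell$. This is immediate from the construction: $M$ is obtained by isometrically glueing $n$ copies of the block $\block$ along their geodesic boundaries, and each $\block$ is in turn built from four copies of $\cell$ via the square diagram from Section \ref{O:subsection}. The boundary identifications involved in assembling $\block$ and in assembling $M$ from the $\block_i$ are all facet-to-facet isometries between copies of $\cell$, so the cell structure given by these $4n$ polytopes is preserved.

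Next I would invoke the classical computation $\Vol(\cell) = \tfrac{4\pi^{2}}{3}$ for the ideal regular hyperbolic $24$-cell. Multiplying gives
\[
\Vol(M) \;=\; 4n \cdot \frac{4\pi^{2}}{3} \;=\; \frac{16\,n}{3}\pi^{2},
\]
which is the second equality in the statement.

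Finally I would apply the Gauss--Bonnet--Chern formula for complete finite-volume hyperbolic four-manifolds, which reads
\[
\chi(M) \;=\; \frac{3}{4\pi^{2}}\,\Vol(M).
\]
Substituting the value of $\Vol(M)$ yields $\chi(M) = 4n$, and simultaneously rewrites the volume as $\tfrac{4\pi^{2}}{3}\chi(M)$, giving the remaining equalities in the proposition.

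The only mildly subtle point is to justify Gauss--Bonnet--Chern in the cusped setting. In dimension four the boundary contribution at each cusp vanishes because the cusp cross-sections are closed flat $3$-manifolds (of the form $T \times [0,1]$ glued appropriately along the boundaries of $M$, but the ends themselves are genuine cusps), so the formula above applies without correction terms. Apart from this verification the proof is essentially a counting argument combined with the known volume of $\cell$.
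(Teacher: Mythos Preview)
Your proof is correct and follows essentially the same approach as the paper: count the $4n$ copies of $\cell$, multiply by $\Vol(\cell)=\tfrac{4\pi^2}{3}$, and then invoke the Gauss--Bonnet--Chern relation $\Vol(M)=\tfrac{4\pi^2}{3}\chi(M)$ for finite-volume hyperbolic four-manifolds. The only minor quibble is the parenthetical in your last paragraph, where the description of the cusp sections as ``$T\times[0,1]$ glued appropriately'' is slightly garbled (those are the sections of $\block$, not of $M$); but the point you need---that the ends of $M$ are cusps on closed flat $3$-manifolds, so no boundary correction appears---is correct.
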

\begin{proof}
The manifold $M$ is tessellated by $n$ isometric copies of $\block$ and hence by $4n$ copies of the $24$-cell $\cell$, which has the volume $\frac 43 \pi^2$, see for instance \cite{K}. The formula $\Vol(M) = \frac {4\pi^2}3\chi (M)$ holds for any hyperbolic four-manifold \cite{G}. 
\end{proof}
Note that there exist cusped hyperbolic four-manifolds having any positive Euler characteristic, c.f.~\cite{RT}. 

We now turn to the $k$ cusps of $M$, corresponding to the $k$ cycles of squares in $C$. The toric maximal sections of the $\block_i$'s glue together to a maximal cusp section of $M$, determined only by the cubulation. It consists of $k$ components, one for each cycle of squares. As above, let $T$ denote the square torus obtained by identifying the opposite sides of a $2\times 2$ Euclidean square. We tessellate $T$ into four unit squares in the obvious way.

\begin{figure}
 \begin{center}
  \includegraphics[width = 9 cm]{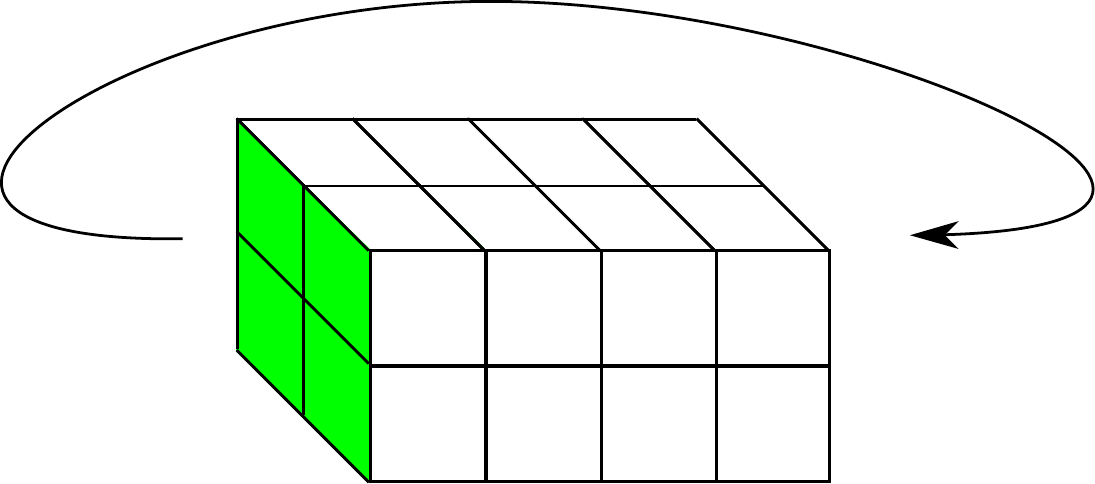}
 \end{center}
 \caption{A cusp in $M$ is constructed from $T\times [0,h]$ by glueing the two boundary tori via an isometry that preserves the tessellation into four unit squares. In the picture, $h=4$.}
 \label{cusp:fig}
\end{figure}

\begin{prop} \label{h:prop}
Let $X$ be a connected component of the maximal cusp section of $M$, corresponding to a cycle of $h$ square $2$-faces. The flat $3$-manifold $X$ is isometric to
$$T\times [0, h]/_\psi$$ 
where $\psi$ identifies $T\times 0$ and $T\times h$ via an orientation-preserving isometry of $T$ which preserves its tessellation into unit squares. Topologically, the cusp section is homeomorphic to the torus bundle over $S^1$ with one of the following monodromies:
$$\begin{pmatrix} 1 & 0 \\ 0 & 1 \end{pmatrix}, \qquad \begin{pmatrix} -1 & 0 \\ 0 & -1 \end{pmatrix}, 
\qquad \begin{pmatrix} 0 & 1 \\ -1 & 0 \end{pmatrix}.
$$
\end{prop}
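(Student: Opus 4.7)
The plan is to trace the cusp section of $M$ around the cycle of $h$ square $2$-faces that it corresponds to, assemble it from pieces, and identify the resulting gluing isometry of $T$.

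First, I would recall from the last subsection of Section \ref{block:section} that each vertex of $\cell$ (equivalently, each $2$-face of $H$) gives a toric cusp of $\block$ whose maximal horosection is $T\times [0,1]$, tessellated by the four unit cubes coming from $\cell_{11},\cell_{12},\cell_{21},\cell_{22}$; the two bounding tori $T\times \{0\}$ and $T\times \{1\}$ are inherited from the two cubic facets of $H$ containing the given $2$-face. Given a cycle of $h$ square $2$-faces $s_1,\dots,s_h$, the associated cusp section of $M$ is therefore obtained by concatenating $h$ such slabs end-to-end along the gluings of the corresponding boundary components, yielding $T\times [0,h]/_\psi$ for some isometry $\psi$ of $T$ that we must control.

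Next I would examine $\psi$. Each facet pairing $\varphi\colon C^1\to C^2$ in the cubulation gives rise to the boundary gluing $\varphi_*\colon N^1\to N^2$ used in Section \ref{construction:subsection}. Since $\varphi_*$ is an isometry of octahedral $3$-manifolds and hence preserves the canonical Epstein--Penner decomposition by Proposition \ref{N:prop}, its restriction to the cusps preserves the decomposition of each boundary torus $T$ into four unit squares (one per copy $\cell_{ij}$). Composing these restrictions around the cycle produces an isometry $\psi$ of $T$ preserving the tessellation into unit squares; and because $M$ is orientable (the construction in Section \ref{construction:subsection} selects orientation-reversing facet gluings), the composite $\psi$ is orientation-preserving. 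This establishes the first assertion.

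The topological classification then reduces to computing the linear part of $\psi$. The orientation-preserving isometries of $T=\matR^2/(2\matZ)^2$ that preserve the tessellation into unit squares have linear part in the orientation-preserving symmetry group of the square, which is cyclic of order $4$, generated by rotation by $\pi/2$. Translations do not affect the homeomorphism type of the torus bundle, so the monodromy is, up to conjugation in $GL(2,\matZ)$, one of $I$, $-I$, or a rotation by $\pm \pi/2$; the latter two are conjugate (via $\mathrm{diag}(1,-1)$) and produce the same torus bundle, leaving exactly the three listed monodromy matrices.

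The main obstacle is step two: one must verify carefully that around a full cycle the various local identifications compose to an orientation-preserving isometry, so as to rule out Klein-bottle bundle cases, and one must check that the tessellation-preserving constraint is not lost under the choice made in Lemma \ref{orientation:lemma}. Both facts follow directly from the construction and from Proposition \ref{N:prop}.
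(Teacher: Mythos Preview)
Your proposal is correct and follows essentially the same approach as the paper: assemble the cusp section from $h$ slabs $T\times[0,1]$, observe that each boundary gluing preserves the unit-square tessellation of $T$ and that orientability of $M$ forces $\psi$ to be orientation-preserving, and then reduce the topological classification to the cyclic group of rotations of the square. The paper's argument is slightly terser (it passes directly from ``$\psi$ preserves the tessellation'' to ``$A$ preserves the unordered pair of coordinate axes''), while you add the explicit invocation of Proposition~\ref{N:prop} and the remark that translations do not affect the bundle type, but the substance is the same.
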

\begin{proof}
The flat manifold $X$ is made of $h$ pieces, each isometric to a toric cusp $T \times [0,1]$ corresponding to some vertex in some block $\block_i$. The tessellation into squares of $T$ corresponds to the subdivision of $\block$ into $\cell_{ij}$'s. By construction, every piece is glued to the subsequent one via an isometry that preserves the tessellation: therefore the maximal cusp section looks exactly as required, see Fig.~\ref{cusp:fig}.

The manifold $X$ is homeomorphic to a torus bundle over $S^1$ with some monodromy $A$ having $\det A = 1$. Since $\psi$ preserves the tessellation, it also preserves the pair (meridian, longitude) up to signs and permutation of its components. Therefore $A$ preserves the unordered pair of coordinate axis in $\matR^2$ and hence is one of the following rotation matrices:
$$\begin{pmatrix} 1 & 0 \\ 0 & 1 \end{pmatrix}, \quad \begin{pmatrix} -1 & 0 \\ 0 & -1 \end{pmatrix}, 
\quad \begin{pmatrix} 0 & 1 \\ -1 & 0 \end{pmatrix}, \quad \begin{pmatrix} 0 & -1 \\ 1 & 0 \end{pmatrix}.
$$
The latter two are conjugate and hence give rise to the same fibred $3$-manifold, up to homeomorphism.
\end{proof}

\begin{cor}
The flat $3$-manifold $X$ has volume $4h$. The maximal cusp section of $M$ has total volume $4\times 24n$ = $96n$.
\end{cor}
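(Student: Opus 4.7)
The corollary is essentially a bookkeeping consequence of Proposition \ref{h:prop} together with the combinatorial correspondence between cusps of $M$ and cycles of square $2$-faces, so the plan is short.

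For the first assertion, I would invoke Proposition \ref{h:prop} directly: the component $X$ is isometric to $T\times[0,h]/_\psi$ where $\psi$ is an orientation-preserving isometry of the square torus $T$. Since $\psi$ is an isometry of $T$, the quotient has the same three-dimensional volume as the unquotiented product $T\times[0,h]$, namely $\Vol(T)\cdot h$. Because $T$ is by definition the torus obtained by identifying opposite sides of a $2\times 2$ Euclidean square, it has area $4$, giving $\Vol(X) = 4h$.

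For the second assertion, I would sum over the $k$ cusps of $M$. By the correspondence recalled just before Proposition \ref{volume:prop}, the cusps of $M$ are in bijection with the cycles of square $2$-faces in the cubulation $C$, and each cycle of length $h_i$ contributes a cusp section of volume $4h_i$ by the first part. The lengths $h_1,\dots,h_k$ partition the total number of square $2$-faces across all hypercubes, and since every hypercube has exactly $24$ square $2$-faces and there are $n$ hypercubes, we have $\sum_{i=1}^k h_i = 24n$. Therefore the total volume of the maximal cusp section is $\sum_{i=1}^k 4h_i = 4\cdot 24n = 96n$.

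The only step requiring any care is verifying that each square $2$-face is counted exactly once in the sum $\sum h_i$; this is immediate from the definition of the cycles as the connected components of the pairing graph on square faces, since that graph partitions the set of $24n$ squares. There is no genuine obstacle here — the statement is essentially a volume calculation plus a counting argument.
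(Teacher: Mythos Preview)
Your proposal is correct and matches the paper's treatment: the corollary is stated there without proof, as an immediate consequence of Proposition \ref{h:prop} and the fact that the cycle lengths partition the $24n$ square $2$-faces. Your two-step argument (compute $\Vol(T\times[0,h]/_\psi)=4h$ since $\psi$ is an isometry, then sum $\sum 4h_i = 4\cdot 24n$) is exactly the intended justification.
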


Recall that a cusped hyperbolic manifold has an \emph{Epstein-Penner canonical decomposition} into geodesic ideal polytopes, determined by the choice of a section at each cusp \cite{E}. 

\begin{prop} \label{EP:prop}
The Epstein-Penner canonical decomposition of $M$ determined by the maximal cusp section is the decomposition of $M$ into $4n$ ideal $24$-cells.
\end{prop}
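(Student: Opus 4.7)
The plan is to apply the Epstein--Penner convex-hull construction in Minkowski space $\matR^{4,1}$. Lifting the maximal cusp section of $M$ to a $\pi_1(M)$-equivariant family $\calH$ of horoballs in the universal cover $\matH^4$, each horoball $B \in \calH$ corresponds to a unique point $\widehat v$ on the positive light cone of $\matR^{4,1}$. Following \cite{E}, the EP decomposition is obtained by projecting the lower faces of the convex hull $\calC$ of these points back to $\matH^4$ and then quotienting by $\pi_1(M)$. The goal is to show that these lower faces correspond exactly to the $4n$ copies of $\cell$ tessellating $M$.

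The first step is to observe that, for any single copy of $\cell$ in the tessellation, the $24$ light-cone lifts of its ideal vertices lie on a common spacelike affine hyperplane $\Pi \subset \matR^{4,1}$, which is a support hyperplane of $\calC$. Indeed, the full symmetry group of the regular polytope $\cell$ acts transitively on its ideal vertices and preserves the maximal cusp section by construction; its extension to Minkowski space permutes the $24$ lifts transitively, forcing them to share a common Minkowski height and thus lie on a single affine hyperplane $\Pi$. Since each lift sits on the light cone and the light cone is strictly convex, $\Pi$ is a genuine support hyperplane of $\calC$, so the $24$-cell appears either as an EP cell or as a union of smaller EP cells.

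The second step is to rule out the possibility that two adjacent copies $\cell^1, \cell^2$ of $\cell$, sharing an octahedral facet $F$, merge into a larger EP cell. In the convex-hull picture this amounts to showing that the two support hyperplanes $\Pi_1, \Pi_2 \subset \matR^{4,1}$ are distinct. Both contain the six lifts of the vertices of $F$, which span the codimension-two affine subspace $\Pi_1 \cap \Pi_2$. If $\Pi_1 = \Pi_2$, then the $18$ remaining vertices of $\cell^2$, obtained from those of $\cell^1$ by the hyperbolic reflection $r$ across $F$, would also lie on $\Pi_1$; equivalently, the Lorentz transformation realising $r$ would preserve $\Pi_1$ setwise while fixing $\Pi_1 \cap \Pi_2$ pointwise, and hence act as a non-trivial affine reflection on $\Pi_1$. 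This would force the lifts of $\cell^1$'s off-$F$ vertices to sit symmetrically across $\Pi_1 \cap \Pi_2$ on the light-cone slice of $\Pi_1$, which a direct inspection of the regular $24$-cell vertex configuration rules out.

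The main obstacle is making this second step fully rigorous. A concrete alternative, which I would adopt if the symmetry argument proves awkward, is to compute the Epstein--Penner tilt at an octahedral facet of $\cell$ equipped with the maximal inscribed horoball packing described in Proposition \ref{h:prop}, and check directly that the sum of tilts across every octahedral facet of the tessellation is strictly negative; this numerical verification uses only the regular right-angled ideal $24$-cell and the fact that its vertex figure is a unit Euclidean cube, which makes the tilt computation explicit. Once the octahedral facets are shown to survive in the EP decomposition, the first step guarantees that no finer subdivision occurs, and we conclude that the EP decomposition of $M$ is precisely the tessellation into $4n$ ideal $24$-cells.
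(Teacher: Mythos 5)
The decisive gap is in your first step. The coplanarity claim itself is fine: the $24$ lifts form a single orbit of the stabiliser of the cell in the symmetry group of the lifted configuration, and that stabiliser fixes the timelike vector $u$ through the barycentre of the cell, so the orbit lies in a level set $\Pi = \{v : \langle v, u\rangle = c\}$. But the inference ``each lift sits on the light cone and the light cone is strictly convex, hence $\Pi$ is a support hyperplane of $\calC$'' is false. $\Pi$ is a spacelike \emph{secant} hyperplane: it meets the cone in a compact ellipsoid, and light-cone points lie on \emph{both} sides of it. Whether the lifts of all the other cusps of the tessellation lie on the correct side of $\Pi$ is precisely the nontrivial content of the proposition --- if even one lift of a farther vertex lay on the wrong side, the $24$ points would not span a face of $\calC$ at all and the Epstein--Penner cells near that cell would be completely different (neither a subdivision nor a merging of the $24$-cell). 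Strict convexity of the cone gives nothing here; you need either an explicit verification over the whole orbit of lifts or a global symmetry argument. Your second step is, by your own admission, also not closed: the ``direct inspection of the vertex configuration'' is not carried out, and the fallback tilt computation is only proposed, not executed (and the tilt theorem is set up in the literature for dimension $3$, so invoking it in dimension $4$ would itself require justification).

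The paper dispatches both steps at once with a single observation you never exploit: the lifted horoball family is invariant not merely under $\pi_1(M)$ and the stabiliser of one cell, but under the \emph{entire} isometry group of the regular honeycomb of $\matH^4$ by right-angled ideal $24$-cells, because the maximal cusp section lifts to one canonically-sized horoball at every ideal vertex of the honeycomb and all vertices are equivalent under that group. Since the Epstein--Penner construction is equivariant, the resulting decomposition is invariant under this group, and the only invariant decomposition is the honeycomb itself. Concretely, this closes exactly the two holes in your argument: the group contains the reflections in the facet hyperplanes, so an EP cell whose interior met a wall would be reflection-invariant and, containing the reflected copies of any $24$-cell inside it, would have infinite volume --- this rules out merging (your step 2) without any vertex inspection; and vertex-transitivity of the cell stabiliser rules out subdivision, since an invariant cell containing the barycentre has vertex set a union of orbits, hence all $24$ vertices. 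If you want to keep your local convex-hull framing, replace your step 2 by this reflection argument; it simultaneously supplies the support property that step 1 assumed without proof.
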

\begin{proof}
The maximal cusp section and the decomposition into ideal $24$-cells lift to the tessellation of $\matH^4$ by ideal $24$-cells, together with a horocusp at each ideal vertex. The set of horocusps is invariant under the action of the isometry group of the tessellation. The Epstein-Penner decomposition is formed by interpreting the horocusps as points of the light cone in $\matR^{4,1}$ and taking their convex hull. Thus, by symmetry, the resulting decomposition is just the original decomposition into $24$-cells.
\end{proof}

How can we determine the precise isometry or homeomorphism class of $X$ by looking only at the combinatorics of $C$? It is much easier to answer this question when the cubulation is \textit{orientable}.

\subsection{Orientable cubulations}\label{orientablecub:section}
Recall that a cubulation $C$ is an isometric pairing of the $8n$ facets of $n$ copies $H_1,\ldots, H_n$ of the hypercube $H$. As usual, the natural orientation of $H$ induces an orientation on all its facets, which is replicated in the copies $H_1,\ldots, H_n$. 
\begin{defn} The cubulation $C$ is \emph{orientable} if all the isometric pairings are orientation-reversing. 
\end{defn}

In our construction, a cubulation $C$ determines a hyperbolic manifold $M$ only up to mutations. When $C$ is orientable, we may resolve this ambiguity as follows: by Lemma \ref{orientation:lemma} every glueing map $\varphi_*^{11}$ is orientation-reversing, and hence we choose $\varphi_* = \varphi_*^{11}$ in all our pairings. 

This choice turns out to be very convenient for the analysis of the maximal cusp section. Recall that each cusp of $M$ corresponds to a cycle of squares in $C$, which may be represented as
\begin{equation}
\label{Q:eqn}
\xymatrix{ Q_{1} \ar@{->}[r]^{\psi_1}  & Q_{2} \ar@{->}[r]^{\psi_2} &\ \cdots\ \ar@{->}[r]^{\psi_{h-1}} &
Q_{h} \ar@{->}[r]^{\psi_h} & Q_1.
}
\end{equation}

Each $Q_i$ is a square in some hypercube and each $\psi_i$ is an isometry. The composition $\psi = \psi_h\circ \cdots \circ \psi_1$ is an isometry of a Euclidean square, whose conjugacy class depends only on the cycle and thus is called its \emph{monodromy}. Since $C$ is orientable, the monodromy $\psi$ is orientation-preserving and is represented (up to a conjugation) by one of the following rotation matrices:
$$\begin{pmatrix} 1 & 0 \\ 0 & 1 \end{pmatrix}, \qquad \begin{pmatrix} -1 & 0 \\ 0 & -1 \end{pmatrix}, 
\qquad \begin{pmatrix} 0 & 1 \\ -1 & 0 \end{pmatrix}.
$$
As above, let $T$ be the torus obtained by identifying the opposite faces of the $2\times 2$ square $[-1,1]^2$.
The matrix $\psi$ acts on $[-1,1]^2$ and hence on $T$.
\begin{prop} \label{monodromy:prop}
Let $C$ be an orientable cubulation. The cusp section corresponding to a cycle of $h$ squares with monodromy $\psi$ is isometric to $T \times [0,h]/_\psi$.
\end{prop}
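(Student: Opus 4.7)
The plan is to combine Proposition \ref{h:prop} with a careful tracking of how the canonical orientable choice $\varphi_* = \varphi_*^{11}$ (permitted by Lemma \ref{orientation:lemma} precisely when the cubulation is orientable) propagates around a cycle of square $2$-faces. By Proposition \ref{h:prop} we already know that $X$ is isometric to $T\times[0,h]/_{\psi'}$ for some orientation-preserving isometry $\psi'$ of $T$ that preserves the tessellation into four unit squares. It therefore suffices to identify $\psi'$ with the monodromy $\psi$.

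Label the four unit squares of $T=[-1,1]^2/\!\sim$ as $S_{jk}$, $(j,k)\in\{1,2\}^2$, recording which copy $\cell_{jk}$ of the block $\block$ the corresponding horoball corner belongs to. The key observation is that the canonical orientable gluing $\varphi_*^{11}$ between two geodesic boundary components of adjacent blocks preserves these labels: by definition it sends $\cell_{jk}^1$ to $\cell_{jk}^2$ for every $j,k$, and restricted to the boundary torus of a toric cusp it sends $S_{jk}$ to the square with the same label in the next block, via (a copy of) the facet-pairing isometry $\psi_i\colon Q_i\to Q_{i+1}$ acting on the square face $Q_i$. Concatenating the $h$ successive gluings along the cycle \eqref{Q:eqn} therefore produces an isometry of $T$ that preserves $S_{11}$ setwise and restricts on $S_{11}$ to the composition $\psi=\psi_h\circ\cdots\circ\psi_1$.

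The argument concludes by a uniqueness step: an orientation-preserving isometry of $T$ which preserves the tessellation and restricts on one unit square $S_{11}$ to a prescribed rotation $\psi$ is uniquely determined, and its linear part on the universal cover $\matR^2$ coincides with the matrix of $\psi$ on $Q_1$. This forces $\psi'=\psi$ on $T$, giving the claimed isometry $X\cong T\times[0,h]/_\psi$. Orientability of $C$ enters here twice: through Lemma \ref{orientation:lemma}, which allows the consistent $\varphi_*^{11}$-choice, and through the resulting orientability of $M$, which guarantees that $\psi$ is an orientation-preserving rotation and hence belongs to the list of three matrices exhibited in Proposition \ref{h:prop}.

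The main step requiring care is the label-compatibility middle paragraph: one must verify that the duality sending a $2$-face of $H$ to the corresponding unit square in the toric cusp section of $\block$ is \emph{natural} with respect to facet-pairings, so that the purely combinatorial composition $\psi_h\circ\cdots\circ\psi_1$ accurately reads off the geometric action on $S_{11}$. This amounts to unpacking the construction of $\varphi_*^{11}$ from Section \ref{construction:subsection} on the level of horoball sections; once this is in place, the extension-uniqueness step is immediate and no further computation is needed.
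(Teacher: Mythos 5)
Your proof is correct and follows essentially the same route as the paper's: both track the distinguished unit square dual to $Q_i$ through the cycle of tori in the cusp section, use the canonical choice $\varphi_*=\varphi_*^{11}$ to see that each gluing carries this square to the corresponding square of the next torus via an isometry dual to $\psi_i$, and conclude that the return map of $T$ is determined by its restriction there, hence given by $\psi=\psi_h\circ\cdots\circ\psi_1$. The only cosmetic difference is that you assert $\psi'=\psi$ on the nose, whereas the paper concludes only that the return map is \emph{conjugate} to $\psi$ --- which suffices, since the monodromy is defined only up to conjugacy and the isometry class of $T\times[0,h]/_\psi$ depends only on that conjugacy class.
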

\begin{proof}
The cycle of squares (\ref{Q:eqn}) corresponds to a cycle of tori
$$\xymatrix{ T_{1} \ar@{->}[r]^{f_1}  & T_{2} \ar@{->}[r]^{f_2} &\ \cdots\ \ar@{->}[r]^{f_{h-1}} &
T_{h} \ar@{->}[r]^{f_h} & T_1.
}$$
in the cusp section. Each $T_i$ is a square $2\times 2$ torus. Let $S_i\subset T_i$ be the unit top-left square in $T_i$: it is naturally dual to the square $Q_i$. By our convention, the isometry $f_i$ sends $S_i$ onto $S_{i+1}$, and it does so via an isometry dual to $\psi_i$. Therefore $f = f_h\circ \cdots \circ f_1$ sends $S_1$ to itself via a map conjugate to $\psi$. Therefore the whole of $f$ is conjugate to $\psi$, as required.
\end{proof}

\begin{example} \label{1:example}
Define an orientable cubulation by taking one hypercube $H$ and pairing its opposite facets via translations. All the parallel square $2$-faces are identified: we thus get $6$ cycles of $4$ squares each; each cycle has trivial monodromy. The resulting hyperbolic manifold $M$ has $6$ cusps. Each maximal cusp section is a $3$-torus isometric to a right-angled parallelepiped with side lengths $2$, $2$ and $4$ whose opposite faces are identified by translations.
\end{example}

\begin{example} \label{2:example}
Define an orientable cubulation by taking two copies $H_1$ and $H_2$ of the hypercube $H$ and pairing each facet of $H_1$ with the corresponding facet of $H_2$ via the identity map. We get $24$ cycles of square $2$-faces, each cycle containing only two squares with trivial monodromy. The resulting hyperbolic manifold $M$ has $24$ cusps. Each maximal cusp section is a cubic $3$-torus isometric to a cube with side-lengths $2$ whose opposite faces are identified by translations. This very symmetric manifold $M$ may be constructed directly by taking eight copies of the triple-coloured $24$-cell $\cell$ and glueing them together according to the following cubic diagram:
$$
\xymatrix@!0{ 
& \cell_{111} \ar@{<->}^R[rr]\ar@{<->}'[d][dd]_B 
& & \cell_{112} \ar@{<->}[dd]^B 
\\ 
\cell_{211} \ar@{<->}[ur]^G\ar@{<->}[rr]^{\ \ R}\ar@{<->}[dd]_B 
& & \cell_{212} \ar@{<->}[ur]^G\ar@{<->}[dd]^<<<{B}
\\
& \cell_{121} \ar@{<->}'[r]^R[rr]
& & \cell_{122} 
\\
\cell_{221} \ar@{<->}[rr]_R\ar@{<->}[ur]^G 
& & \cell_{222} \ar@{<->}[ur]_G 
}
$$

The manifold $M$ is analogous to the octahedral manifold $N$: indeed we can compute $\Iso(M)$ by applying the same proof of Proposition~\ref{N:prop}. Let $H$ (resp. $V$, and $L$) be the isometry of $M$ that interchanges $\mathscr{C}_{ij1}$ with $\mathscr{C}_{ij2}$ (resp. $\mathscr{C}_{i1j}$ with $\mathscr{C}_{i2j}$, and $\mathscr{C}_{1ij}$ with $\mathscr{C}_{2ij}$). Then $\iota = H\cdot V\cdot L$ is the central involution of the above cubical diagram. We get the following exact sequence:
$$0 \longrightarrow \matZ_2 \oplus \matZ_2 \oplus \matZ_2 \longrightarrow \Iso(M) \longrightarrow \Iso(\mathscr{C}) \longrightarrow 0,$$
where the group $\matZ_2 \oplus \matZ_2 \oplus \matZ_2$ is generated by $H$, $V$ and $L$. 
\end{example}

\subsection{Uniqueness}
Two cubulations 
$$C = \{H_1,\ldots, H_n\}, \qquad C' = \{H_1', \ldots, H_n'\}$$ 
are \emph{combinatorially equivalent} if there is a sequence of isometries $\{\varphi_i\colon H_i \to H_i'\}_{i=1,\ldots, n}$ which transforms all the pairings of $C$ into the pairings of $C'$. Below we prove the following theorem.

\begin{teo} \label{eq:teo}
Non-equivalent cubulations with at least $3$ hypercubes produce non-homeomorphic hyperbolic four-manifolds.
\end{teo}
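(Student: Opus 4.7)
The plan is to use Mostow-Prasad rigidity together with Proposition~\ref{EP:prop} to reduce the homeomorphism question to a combinatorial one about the $24$-cell tessellations, and then to show that the block structure can be canonically recovered from the tessellation whenever $n \geq 3$.

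First, I would apply Mostow-Prasad rigidity to upgrade any homeomorphism $M \to M'$ to an isometry $f : M \to M'$. By the canonicity of the Epstein-Penner decomposition and Proposition~\ref{EP:prop}, this $f$ carries the set of $4n$ ideal $24$-cells of $M$ bijectively onto that of $M'$, preserving every octahedral $3$-face identification. It is then enough to show that $f$ sends blocks to blocks, since an isometry of blocks is combinatorially an isometry of hypercubes (recall that a block has eight geodesic boundary components corresponding to the eight cubic facets of $H$ and twenty-four toric cusps corresponding to the twenty-four $2$-faces of $H$); collecting these isometries together with the bijection between blocks and the compatibility with the inter-block facet pairings preserved by $f$ then yields the desired combinatorial equivalence $C \cong C'$.

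To recover the blocks intrinsically, I introduce the following relation on the $24$-cells of $M$: declare $X \sim Y$ when $X$ and $Y$ share (at least) $8$ octahedral $3$-faces inside $M$. Within any block, the four $24$-cells are joined by the red and blue facet pairings into a $4$-cycle $\cell_{11}, \cell_{12}, \cell_{22}, \cell_{21}$ in which each consecutive pair shares the full octet of $8$ red or $8$ blue facets; so the four $24$-cells of a block lie in a single connected component of the $\sim$-graph. The key lemma is that for $n \geq 3$ two $24$-cells lying in \emph{distinct} blocks $\block_i$ and $\block_j$ can never be $\sim$-related: if they were, there would have to be eight parallel inter-block facet pairings between the hypercubes $H_i$ and $H_j$ all aligning the two given $24$-cells, thereby exhausting every facet of both $H_i$ and $H_j$ and disconnecting them from the remaining $n - 2 \geq 1$ hypercubes, contradicting the connectedness of the cubulation. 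Hence the connected components of $\sim$ are exactly the blocks, and $f$ must send blocks to blocks.

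The main obstacle is this $8$-face sharing lemma: one has to keep track of how each inter-block pairing contributes exactly one shared octahedral face between a specified pair of $24$-cells and argue that no more than $7$ such contributions can accumulate once $n \geq 3$ and connectedness are assumed. A secondary subtlety is that the red/blue/green $3$-colouring of an individual $24$-cell is not intrinsic to $M$ --- the isometry $f$ may permute the three colour classes locally --- but once the block decomposition is pinned down the role of ``green = inter-block'' is forced, and the residual swap between red and blue is an internal symmetry of the block that corresponds to a trivial relabelling of the cubulation.
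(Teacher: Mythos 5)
Your second half --- recovering the blocks from the decomposition into ideal $24$-cells via the ``share at least $8$ octahedral faces'' relation --- is essentially the paper's Lemma \ref{24:lemma}, and your key lemma is proved the same way there: inside a block the four $24$-cells form a $4$-cycle glued along red and blue octets, while two $24$-cells in distinct blocks can only share green facets, one per boundary pairing, so sharing eight of them forces all eight facet-pairings of $H_i$ to land on $H_j$ and disconnects the cubulation once $n\geqslant 3$. That part is sound and matches the paper.

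The genuine gap is in the first step, in the phrase ``by the canonicity of the Epstein--Penner decomposition.'' The Epstein--Penner decomposition is canonical only \emph{relative to a choice of cusp cross-sections}: when $M$ has $k\geqslant 2$ cusps it depends on the ratios of the horoball volumes assigned to the different cusps, and Proposition \ref{EP:prop} identifies the $24$-cell decomposition only as the Epstein--Penner decomposition determined by the \emph{maximal} cusp section. An isometry $f\colon M\to M'$ therefore carries the $24$-cell decomposition of $M$ to the Epstein--Penner decomposition of $M'$ relative to the sections $f(S)$, and to conclude that this is the $24$-cell decomposition of $M'$ you must know that $f$ takes maximal sections to maximal sections --- equivalently, that the integer $h$ attached to each cusp (the preferred section has volume $4h$) is recoverable from the similarity class of the flat cross-section alone. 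This is exactly the paper's Lemma \ref{h:lemma}, and it is not formal: its proof runs a case analysis over the flat manifolds $T\times [0,h]/_\psi$, choosing shortest orthogonal equal-length lattice vectors $v_1,v_2$ subject to an integrality constraint on $2\Vol(X)/l^3$, precisely to exclude coincidences such as the length-$\sqrt{2}$ vectors $(1,0,1)$ and $(-1,0,1)$ when $\Gamma$ is generated by $(2,0,0)$, $(0,2,0)$, $(1,0,1)$. Your argument is complete as written only for one-cusped manifolds, where scale-invariance makes the step immediate; since Theorem \ref{eq:teo} is applied to manifolds with arbitrarily many cusps, the missing lemma must be supplied.
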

A cubulation actually produces a finite set of hyperbolic four-manifolds related by mutations, and the theorem says that any two manifolds produced by non-equivalent cubulations with at least $3$ hypercubes are non-homeomorphic. A similar theorem was proved in \cite{CFMP}, and our proof strategy is the same. We do not know if the hypothesis on the number of hypercubes is necessary, but it helps to simplify the arguments. We start by proving a lemma.

\begin{lemma} \label{24:lemma}
Combinatorially non-equivalent cubulations with at least $3$ hypercubes produce combinatorially non-equivalent decompositions into ideal $24$-cells.
\end{lemma}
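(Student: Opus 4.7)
The plan is to reconstruct the combinatorial data of the cubulation $C$ canonically from the combinatorial data of the decomposition $\calD$ of $M$ into $4n$ ideal $24$-cells; any combinatorial equivalence between two such decompositions will then induce a combinatorial equivalence between the underlying cubulations. The key ingredient is that the natural three-colouring of $\cell$ is intrinsic: since every combinatorial automorphism of $\cell$ permutes the three colour classes $G$, $R$, $B$, each $24$-cell of $\calD$ acquires a canonical partition of its $24$ facets into three classes of $8$, defined up to a relabelling of the three colours.

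The crucial step is to single out the green class intrinsically. For each $24$-cell $c$ and each colour class $\kappa$ of $c$, one asks whether the $8$ facets in $\kappa$ are all paired with facets of a single $24$-cell $c' \neq c$. By construction of the block $\block$, the red and blue classes of every $c$ satisfy this \emph{concentration} property (the $8$ red facets of $\cell_{11}$ go collectively to $\cell_{12}$, and its $8$ blue facets go to $\cell_{21}$). I claim that, under the hypotheses $n \geq 3$ and connectedness, the green class of every $c$ fails it. Indeed, if the $8$ green facets of $c$ were all paired with facets of a single cell $c'$ in a different block $\block'$, then all $8$ super-facets of the block $\block_c$ would be glued to super-facets of $\block'$, exhausting the $8$ super-facets of $\block'$ and disconnecting the remaining $n-2$ blocks; a similar counting argument shows that concentration to a $c'$ in the same block would force all super-facets of $\block_c$ to be self-paired, again disconnecting the rest. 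Thus the green class is characterised intrinsically as the unique non-concentrated one.

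Once the green class has been identified in every $24$-cell, the remaining combinatorics follows. The red/blue pairings define a graph on the $4n$ cells whose connected components are $n$ disjoint $4$-cycles, and these are precisely the blocks. Within a block, the $8$ super-facets are reconstructed by following internal pairings from any green facet $O \subset \cell_{11}$: the $4$ triangular $2$-faces of $O$ adjacent to red facets glue, through the red pairings, to $4$ triangles lying on a single green facet $O' \subset \cell_{12}$, and analogously for blue; the quadruple $\{O, O', O'', O'''\}$ then constitutes a super-facet. The cubulation facet pairings are the clusters of $4$ green-facet pairings sharing the same ordered pair of super-facets. Finally, each block inherits the combinatorial structure of a hypercube $H$ through its cusps: via the $1$--$1$ correspondence between cusps of $\block$ and vertices of $\cell$ (equivalently, $2$-faces of $H$), two super-facets share a cusp in $\block$ iff the corresponding facets of $H$ are non-opposite, which yields the facet-adjacency graph of $H$ (a $K_8$ minus a perfect matching of opposite pairs) and completes the reconstruction of $C$ from $\calD$.

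The main obstacle is the intrinsic identification of the green class, and the hypothesis $n \geq 3$ enters essentially at this point. For $n = 2$ with the hypercubes paired symmetrically as in Example~\ref{2:example}, all three colour classes of every $24$-cell become ``concentrated'' and there is no combinatorial way to distinguish green from red or blue, reflecting a genuine ambiguity in the block structure. The subsidiary claims---uniqueness of the three-colouring up to relabelling, consistency of the super-facet reconstruction under the mutation ambiguity in the choice of $\varphi_*$, and canonicity of the cusp-based identification of the hypercube structure---all reduce to the explicit properties of $\cell$ and $\block$ already established in the preceding sections.
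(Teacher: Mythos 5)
Your proposal is correct and follows essentially the same route as the paper: you recover the cubulation from the decomposition into ideal $24$-cells by observing that the red and blue colour classes of each cell concentrate onto a single neighbouring cell while the green class cannot when $n\geqslant 3$, since green concentration would force all eight boundary components of a block to be glued to a single block, giving $n\leqslant 2$ by connectedness --- exactly the dichotomy (``adjacent to two versus three cells along eight facets'') in the paper's proof. Your intrinsic colour-class formulation of the criterion and your explicit reconstruction of the super-facets, facet pairings and hypercube structure are more detailed than the paper's terse write-up, but they elaborate the same argument rather than constitute a different one.
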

\begin{proof}
A cubulation $C$ consisting of $n$ hypercubes gives rise to a hyperbolic four-manifold $M$ (defined up to mutations) which can be decomposed into $4n$ hyperbolic ideal regular $24$-cells. We show that the cubulation $C$ can be recovered (up to combinatorial equivalence) from such a decomposition: this proves the lemma.

Recall that in the block $\block$ every $24$-cell is adjacent to two other $24$-cells along two sets of eight facets sharing same colour. If every $24$-cell of the decomposition is adjacent only to two other $24$-cells along eight facets, then the blocks can be recovered from the decomposition into $24$-cells. Thus, the decomposition into blocks determine $C$ and the lemma is proved.

If not, there is a $24$-cell which is adjacent to three $24$-cells, along eight facets to each. This implies that the block $\block$ that contains this $24$-cell is incident along all of its $8$ geodesic boundary components to another block $\block'$ (which might coincide with $\block$). Therefore $C$ consists only of one or two hypercubes, which contradicts our hypothesis.
\end{proof}

We would like to conclude by saying that the decomposition into ideal $24$-cells is determined by the topology of $M$ only. Proposition \ref{EP:prop} says that the decomposition is determined by the topology of $M$ \emph{and} the maximum cusp section: thus we need to prove that the maximum cusp section is determined by the topology. When $M$ has only one cusp this is immediate: when the are more cusps the situation is more delicate, because the maximum cusp section depends on the ratios of the volumes spanned by different cusp sections. Luckily, the topology alone tells us which ratio to choose.
A similar argument was used in \cite{CFMP}.

\begin{lemma} \label{h:lemma}
Let $M$ be a hyperbolic manifold produced by a cubulation. Then the maximal cusp section  of $M$ is determined only by its topology.
\end{lemma}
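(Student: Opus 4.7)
The plan is to combine Mostow-Prasad rigidity with the Epstein-Penner canonical decomposition to characterize the maximal cusp section purely in terms of the hyperbolic structure on $M$, which by rigidity is determined by the homeomorphism type. It suffices to recover the maximal cusp section as an intrinsic geometric feature of $M$.

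First, I would establish that the tessellation of $M$ by $4n$ regular ideal $24$-cells is an intrinsic feature of the hyperbolic structure. The existence is part of the construction. For uniqueness, observe that any two tessellations of $M$ by regular ideal $24$-cells lift to tessellations of $\matH^4$ by congruent polytopes, and two such tessellations of $\matH^4$ differ by a global isometry that descends to an isometry of $M$. Hence the $24$-cell decomposition of $M$ is unique up to the (finite) symmetry group of $M$.

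Next, since by Proposition \ref{EP:prop} the $24$-cell tessellation coincides with the Epstein-Penner decomposition for our maximal cusp section, and since Epstein-Penner depends only on the ratios of cusp volumes $v_i/v_j$, these ratios are encoded in the tessellation itself. Concretely, the maximal horocusp section of a single regular ideal $24$-cell assigns volume $4$ to each ideal vertex (a unit Euclidean cube), so in $M$ the $i$-th cusp receives total volume $v_i^*=4h_i$, where $h_i$ is the number of $24$-cell vertices belonging to that cusp — a combinatorial invariant of the intrinsic decomposition. Thus the ratios $v_i^*:v_j^* = h_i:h_j$ are determined by topology.

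Finally, the overall scale is fixed by a maximality condition: once the ratios are pinned down, the admissible uniform rescalings form a bounded interval $(0,t^*]$, with $t^*$ characterized as the first value at which the horoball neighborhoods become mutually tangent along edges of the $24$-cell decomposition. The value $t^*=1$ corresponds to our maximal cusp section, and is an intrinsic geometric quantity of $M$. Combining the three steps, the maximal cusp section is determined by the topology of $M$.

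The main obstacle is the uniqueness assertion in the first step: ruling out the possibility of an "exotic" regular $24$-cell decomposition of $M$ whose cusp volume ratios differ from $h_1:\cdots:h_k$. The argument must leverage both the rigidity of the regular ideal $24$-cell in $\matH^4$ (so that any global tessellation is isometric to the standard one) and Mostow rigidity applied to $M$ (so that any two such tessellations of $M$ differ only by an automorphism of $M$), in a manner parallel to the three-dimensional case treated in \cite{CFMP}.
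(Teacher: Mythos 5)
You correctly identify where the difficulty lies, but your step~1 is a genuine gap, not a technicality, and your own closing paragraph concedes it without filling it. From $g\mathcal{T}_1=\mathcal{T}_2$ for two $\Gamma$-invariant lifted tessellations (where $\Gamma=\pi_1(M)$) you may only conclude that both $\Gamma$ and $g\Gamma g^{-1}$ lie in the symmetry group of $\mathcal{T}_2$; nothing forces $g\Gamma g^{-1}=\Gamma$, so $g$ need not descend to $M$ --- it only induces an isometry from $M$ onto $\matH^4/g\Gamma g^{-1}$ carrying the first decomposition to the second decomposition \emph{of the target}, which is not the uniqueness up to $\Iso(M)$ you assert. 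Worse, since $\Gamma$ has finite index in both symmetry groups, such a $g$ lies in the commensurator of $\Gamma$, which is dense here (the $24$-cell reflection group is arithmetic); so ruling out an ``exotic'' regular $24$-cell decomposition cannot follow from general rigidity nonsense and would require genuine extra input that you do not supply. There is also a circularity in your step~2: Proposition \ref{EP:prop} identifies the $24$-cell decomposition with the Epstein--Penner decomposition only \emph{relative to the maximal cusp section}, which is precisely the object whose topological invariance is to be proved, so it cannot be used to certify the tessellation beforehand.

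The paper's proof sidesteps all of this by never claiming uniqueness of decompositions into regular ideal $24$-cells: canonicity is delegated entirely to the Epstein--Penner construction, which is canonical by definition once a choice of cusp sections (equivalently, of cusp volumes up to common scaling) is fixed. The whole content of the lemma is therefore to recover, from the similarity class of each flat cusp cross-section $X = T\times[0,h]/_\psi$ --- a topological invariant by Mostow--Prasad rigidity --- the integer $h$, hence the volume-$4h$ section. The paper does this by an elementary lattice computation: in the translation lattice $\mathcal{T}<\Gamma$ of $X=\matR^3/_\Gamma$ one takes the shortest orthogonal pair $v_1,v_2$ of equal length $l$ subject to the integrality constraint $\frac{2\Vol(X)}{l^3}\in\matZ$, and verifies case by case ($h\geqslant 3$; $h=2$; $h=1$ with non-translation monodromy; $h=1$ with each of the four possible translation monodromies) that this ratio equals $h$, the integrality condition discarding spurious short pairs such as $v_1=(1,0,1)$, $v_2=(-1,0,1)$ of length $\sqrt2$. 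Your proposal contains no substitute for this computation because all the work was pushed into the unproved step~1. To repair it you would either have to prove uniqueness of regular $24$-cell decompositions (hard, and unnecessary --- note that \cite{CFMP} likewise works through Epstein--Penner rather than uniqueness of tessellations), or follow the paper: determine the cusp-volume ratios intrinsically from the cusp shapes and let Epstein--Penner provide canonicity.
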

\begin{proof}
The maximal cusp section is obtained by selecting for each cusp corresponding to a cycle of $h$ squares the unique section $X$ with three-dimensional volume equal to $4h$. If we show that the integer $h$ can be recovered intrinsically from the topology of the cusp, the proof is finished.

More precisely, the flat manifold $X$ is only determined up to similarity, and we now prove that $h$ can be recovered from its similarity class. As Proposition \ref{h:prop} says, up to a similarity we have $X=T\times [0,h]/_\psi$, where $T$ is the $2\times 2$ square torus and $\psi$ identifies $T\times 0$ with $T\times h$ via an orientation-preserving isometry of $T$ that preserves its tessellation into four unit squares. 

We have that $X = \matR^3/_\Gamma$ where $\Gamma$ is a discrete group of orientation-preserving isometries of $\matR^3$ without fixed points, determined only up to a similarity. The group $\Gamma$ contains a finite-index translation subgroup $\mathcal{T}< \Gamma$ isomorphic to $\matZ^3$ which may be seen as a lattice in $\matR^3$, which is also defined up to a similarity. 

Let $v_1, v_2 \in \mathcal{T}$ be two vectors such that the following conditions are satisfied:
\begin{enumerate}
\item $v_1$ and $v_2$ are orthogonal and have the same length $l$;
\item the number $h=\frac{2\Vol (X)} {l^3}$ is an integer;
\item $v_1$ and $v_2$ are the shortest such vectors.
\end{enumerate}
The resulting integer $h=\frac{2\Vol (X)}{l^3}$ depends only on the similarity class of $X$ because the formula is invariant under a rescaling of the flat metric on $X$. To show that in this way we recover the number $h$, we have to analyse separately a number of cases proving the existence of the vectors $v_1$ and $v_2$ in each of them. Recall that up to a similarity we have $X = T\times [0,h]/_\psi$. Then, if $h\geqslant 3$, it is clear that $v_1$ and $v_2$ are the length two vectors corresponding to the translations along the respective dimensions of $T$, which is a side of the parallelepiped that gives rise to the lattice $\mathcal{T}$. If $h=2$, there can be another pair of vectors having length two, not necessarily corresponding to $T$, that we can choose as $v_i$'s. However, we always have $l=2$ and $\Vol(X) = l^2h = \frac{l^3h}2$. Thus, we are left with the case of $h=1$.

Suppose $\psi$ is not a translation, that is $X$ is not a $3$-torus. Thus $\mathcal{T}\neq \Gamma$ and $\mathcal{T}$ is generated by $(2,0,0), (0,2,0),$ and a third vector of the form $(a,b,c)$ with $c=2$ or $c=4$, which corresponds to either $\psi^2$ or $\psi^4$ depending on the homeomorphism class of $X$. In this case we conclude as above. Finally, we are left with the case $\mathcal{T}=\Gamma$ and $h=1$. We know that $\psi$ preserves the tessellation of the torus $T$ into four squares. There are four possible such translations, and so the lattice $\mathcal{T}$ is generated by one of the following triples: 
$$ (2,0,0), (0,2,0), (0,0,h); $$
$$ (2,0,0), (0,2,0), (1,0,h); $$
$$ (2,0,0), (0,2,0), (0,1,h); $$
$$ (2,0,0), (0,2,0), (1,1,h). $$
It is easy to check that in all the above cases $v_1$ and $v_2$ are either the original vectors $(2,0,0)$ and $(0,2,0)$ that generate the torus $T$, or some other pair of vectors having length $l=2$, that we can choose as $v_i$'s. The requirement that $\frac{2\Vol(X)}{l^3}$ is an integer excludes the solutions with $l=\sqrt 2$ that arise for instance if $v_1=(1,0,1)$ and $v_2=(-1,0,1)$ when $\Gamma$ is generated by $(2,0,0)$, $(0,2,0)$, and $(1,0,1)$.
\end{proof}

\prf{eq:teo}
Non-equivalent cubulations produce combinatorially non-equivalent decompositions into ideal $24$-cells by Lemma \ref{24:lemma}. These decompositions are certain Epstein-Penner canonical decompositions by Proposition \ref{EP:prop}, which depend only on the topology of the manifolds by Lemma \ref{h:lemma}. Therefore, non-equivalent decompositions give rise to non-homeomorphic manifolds.
\prfend

\section{Four-manifolds with $k$ cusps} \label{manifolds:section}
We can now use cubulations to construct plenty of cusped hyperbolic four-manifolds. Let $\rho_k(V)$ denote the number of non-homeomorphic orientable hyperbolic four-manifolds with $k$ cusps and volume at most $V$. Below, we prove the following theorem.
\begin{teo} \label{V:teo}
For every integer $k\geqslant 1$ there are two constants $C>1$, $V_0>0$ such that $\rho_k(V) > C^{V\ln V}$ for all $V>V_0$.
\end{teo}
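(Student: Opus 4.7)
The plan is to convert the problem into pure combinatorics, count cubulations with a prescribed number of $2$-face cycles, and then invoke Theorem \ref{eq:teo} to turn this count into a count of pairwise non-homeomorphic manifolds. By Proposition \ref{volume:prop}, a cubulation with $n$ hypercubes produces a hyperbolic four-manifold of volume $V_n = \frac{16n}{3}\pi^2$, so $n \asymp V$. Since the number of cusps of the resulting manifold equals the number of cycles in the $2$-face pairing of the cubulation, it suffices to construct, for every $k \geqslant 1$ and every sufficiently large $n$, at least $n^{cn}$ combinatorially inequivalent orientable cubulations on $n$ hypercubes whose $2$-face pairing has exactly $k$ cycles. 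Stirling's formula $n! \geqslant (n/e)^n$ then gives $\rho_k(V_n) > C^{V_n \ln V_n}$ for some $C > 1$, and because the values $V_n$ are equally spaced this extends to all $V > V_0$.

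First I would establish the existence of at least one orientable cubulation with exactly $k$ cycles of $2$-faces. Example \ref{1:example} already provides one with $6$ cycles on a single hypercube. For other values of $k$ one can either combine base cubulations by gluing them along a single facet pair (which merges a controllable number of cycles) or modify Example \ref{1:example} by replacing some identity translations with non-trivial isometries of the cubic facet: each such local modification either merges two cycles into one or splits a cycle in two, and so by iterated local moves one can reach any target value of $k$. Denote the resulting base cubulation by $C_0^{(k)}$ and let $n_0$ be its number of hypercubes.

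Next I would perform an amplification construction. Inside $C_0^{(k)}$ choose a pair of identified cubic facets and "open" this identification, leaving two free facets; then insert a long chain of $n-n_0$ auxiliary hypercubes $H_1,\ldots,H_{n-n_0}$ between them. The chain is designed so that each $H_i$ has six of its $24$ square $2$-faces paired \emph{internally} in a fixed way (contributing a constant number of short local cycles) while the remaining squares lie on a single through-going cycle that eventually closes up inside $C_0^{(k)}$. The choice of the linear order of the $H_i$'s, together with one of the $48$ possible isometries between each consecutive pair of cubic facets, produces at least $(n-n_0)! \cdot 48^{n-n_0-1}$ different facet pairings. After breaking the residual combinatorial symmetries of the chain (e.g.\ by prescribing distinct decorations on each $H_i$ via its internal pairing), distinct choices yield combinatorially non-equivalent cubulations, and the count easily exceeds $n^{cn}$.

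The main obstacle is the third step: one must design the internal pairings of the chain hypercubes so that the insertion is genuinely \emph{cycle-neutral}, i.e.\ every permutation of the $H_i$'s produces exactly the same total number $k$ of cycles. This requires a careful combinatorial analysis of how the $24(n-n_0)$ square $2$-faces in the chain assemble under arbitrary permutations of the hypercubes, so that no accidental merges or splits of cycles occur. Once the chain is arranged so that each $H_i$ contributes a fixed number of local cycles independent of its position and the through-going cycle simply gets longer, the final cycle count depends only on $C_0^{(k)}$ and on the (constant) local contribution of the chain; a small preliminary adjustment of $C_0^{(k)}$ absorbs this constant. Theorem \ref{eq:teo} (applicable since $n \geqslant 3$) then converts these $\geqslant n^{cn}$ inequivalent cubulations into the same number of pairwise non-homeomorphic hyperbolic four-manifolds, completing the proof.
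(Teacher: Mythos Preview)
Your reduction to counting cubulations with a prescribed number of $2$-face cycles and then invoking Theorem~\ref{eq:teo} is exactly right, and this is also how the paper proceeds. The difficulty lies entirely in the combinatorial lemma, and there your argument has a genuine gap which you yourself flag but do not resolve.

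The tension is this: your $n^{cn}$ count comes from permuting the chain hypercubes $H_1,\ldots,H_{n-n_0}$ and from the $48$ isometries at each junction. But if the $H_i$ are all identical, permuting them is a combinatorial equivalence and contributes nothing to the count; to make them distinguishable you propose ``distinct decorations via internal pairing'', yet any such internal pairing affects which $2$-faces are matched and hence the cycle structure. Likewise, the $48$ (really $24$, since you need orientation-reversing maps) different isometries between consecutive cubic facets induce different bijections among the six boundary $2$-faces, so different choices will merge or split cycles differently. In short, the very freedom you are counting is precisely the freedom that alters the number of cycles. Saying that one will ``arrange the chain so that each $H_i$ contributes a fixed number of local cycles independent of its position'' is restating the problem, not solving it; no concrete partial cubulation with this property is exhibited, and it is far from clear that one exists compatible with factorial-many distinguishable insertions.

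The paper sidesteps this obstacle by a different mechanism. It does not try to produce many cycle-neutral modifications of a single base. Instead it starts from the $C^{n\ln n}$ non-isomorphic simple $8$-regular graphs (Bollob\`as), realises each as the incidence graph of \emph{some} cubulation with uncontrolled cycle count, and then applies a local ``flower'' move (Proposition~\ref{flower:prop}, inserting one extra hypercube at an edge) which reduces the cycle count by exactly one at each application, together with a companion move (Proposition~\ref{flower2:prop}) that increases it. At most $O(n)$ such moves bring the cycle count to exactly $k$. Injectivity is then obtained not by tracking cycles but by observing that the original simple graph is recovered from the final incidence graph by deleting all loops; hence non-isomorphic starting graphs yield non-equivalent cubulations. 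This decouples the counting from the cycle bookkeeping, which is exactly the step your proposal is missing.
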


Let us start by constructing a hyperbolic four-manifold with one cusp.

\subsection{A hyperbolic manifold with one cusp.}  \label{manifold:subsection}
In the previous section we have shown how to transform a four-dimensional cubulation into a hyperbolic four-manifold. Recall that the $2$-dimensional faces in a cubulation are partitioned into cycles, and that each cycle gives rise to a cusp in the respective manifold. Therefore, in order to construct a hyperbolic manifold with one cusp we have to find a cubulation with only one cycle of square $2$-faces. We construct such a cubulation here.

\begin{figure}
 \begin{center}
  \includegraphics[width = 8cm]{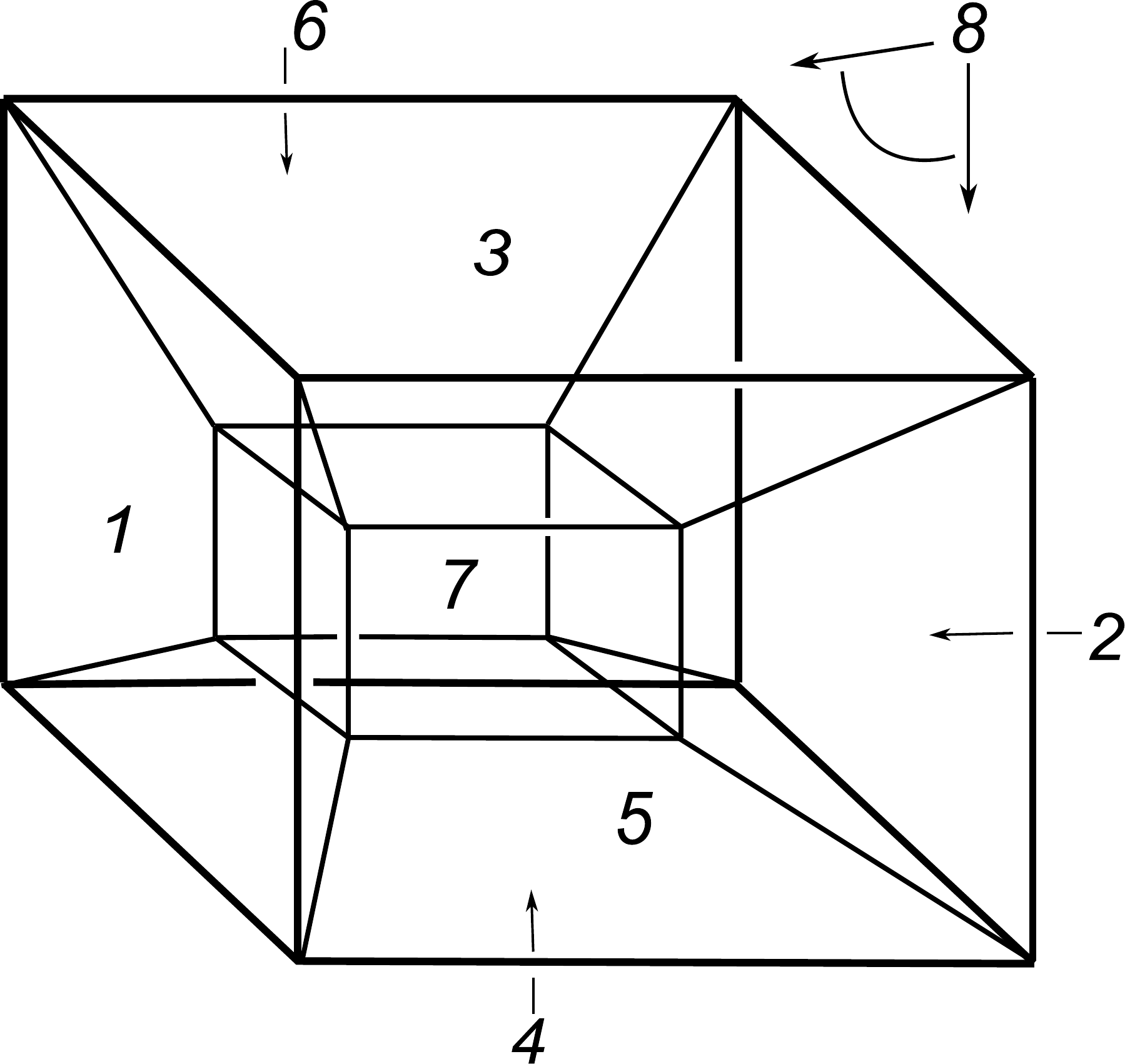}
 \end{center}
 \caption{The Schlegel diagram of the hypercube $H$. Each facet is labelled with an integer $1$ to $8$. The innermost facet has label $7$, the outermost has label $8$.}
 \label{cube:fig}
\end{figure}

The cubulation consists of one hypercube $H$ shown in Fig.~\ref{cube:fig}. Its eight cubic facets are numbered from $1$ to $8$. We pair the opposite faces $(1,2)$, $(3,4)$, $(5,6)$ and $(7,8)$ via certain Euclidean isometries. The isometries are described in Fig.~\ref{cubulationM:fig}: we have reproduced every facet from Fig.~\ref{cube:fig} and drawn a frame $x,y,z$ in each, that consists of a vertex and three adjacent oriented edges ordered as $x,y,z$. We identify the facets on the left with the facets on the corresponding right using the unique isometry that matches the frames.

\begin{figure}
 \begin{center}
  \subfigure[]{\includegraphics[width = 7cm]{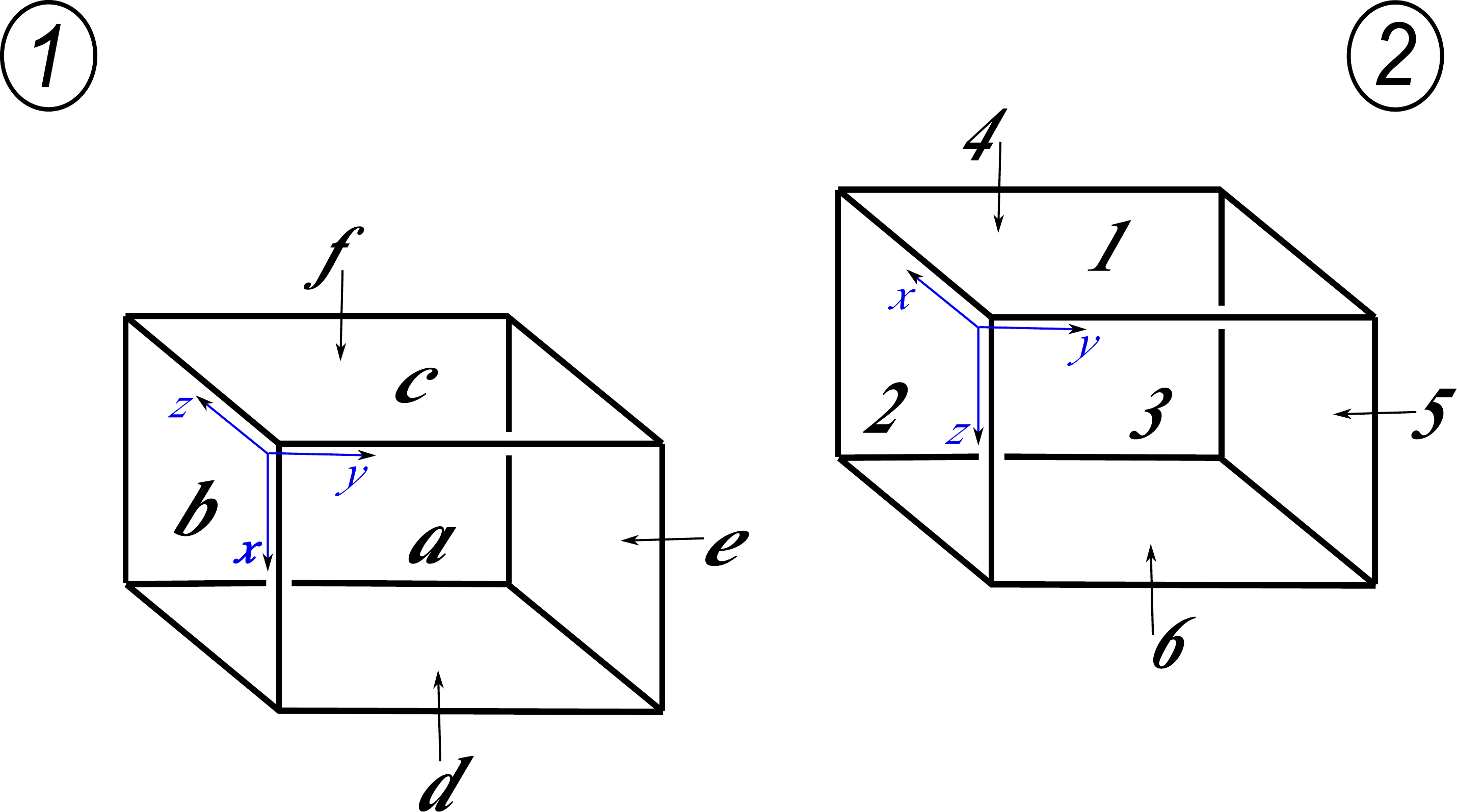}}
  \subfigure[]{\includegraphics[width = 7cm]{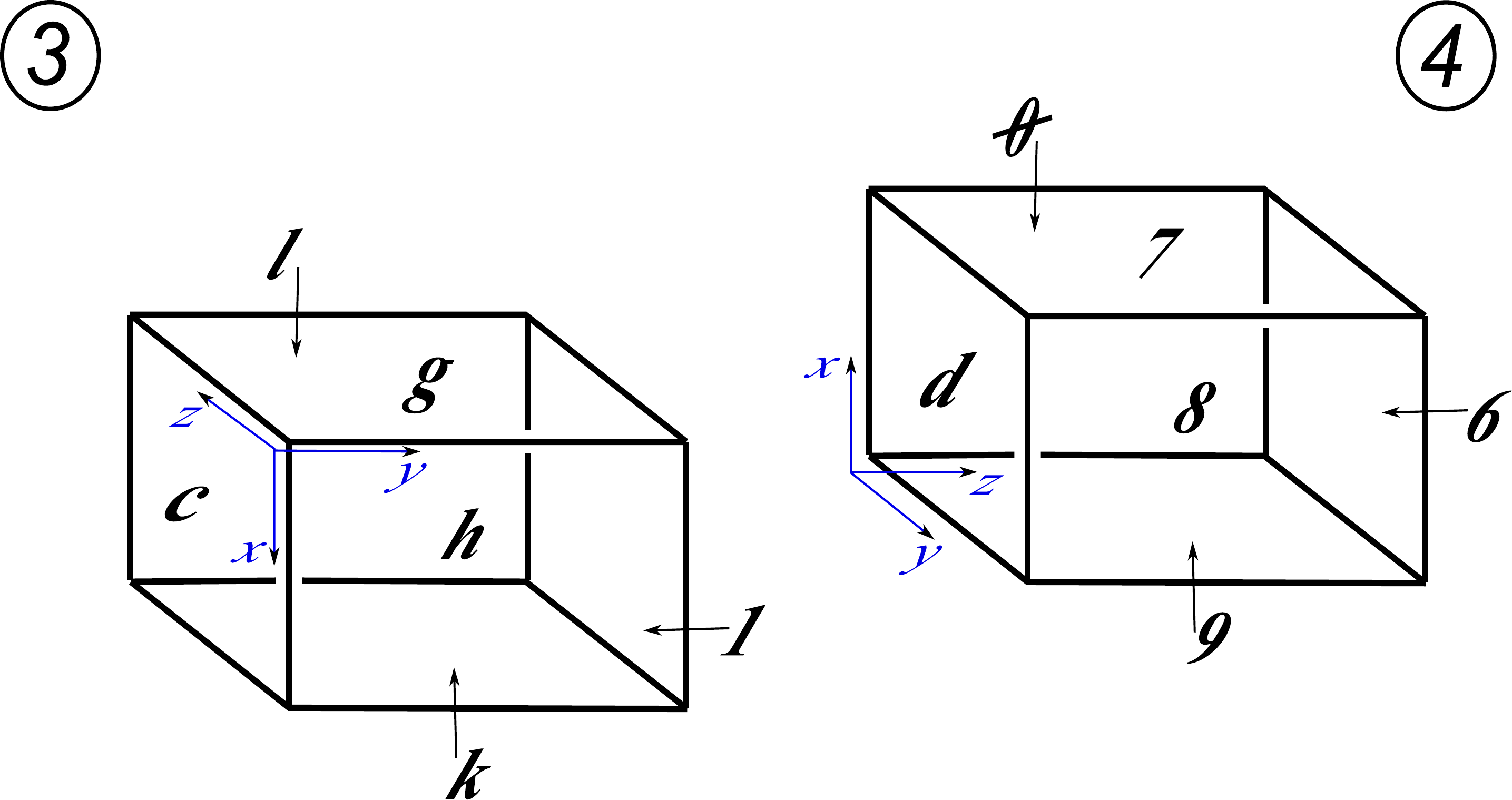}}
  \subfigure[]{\includegraphics[width = 7cm]{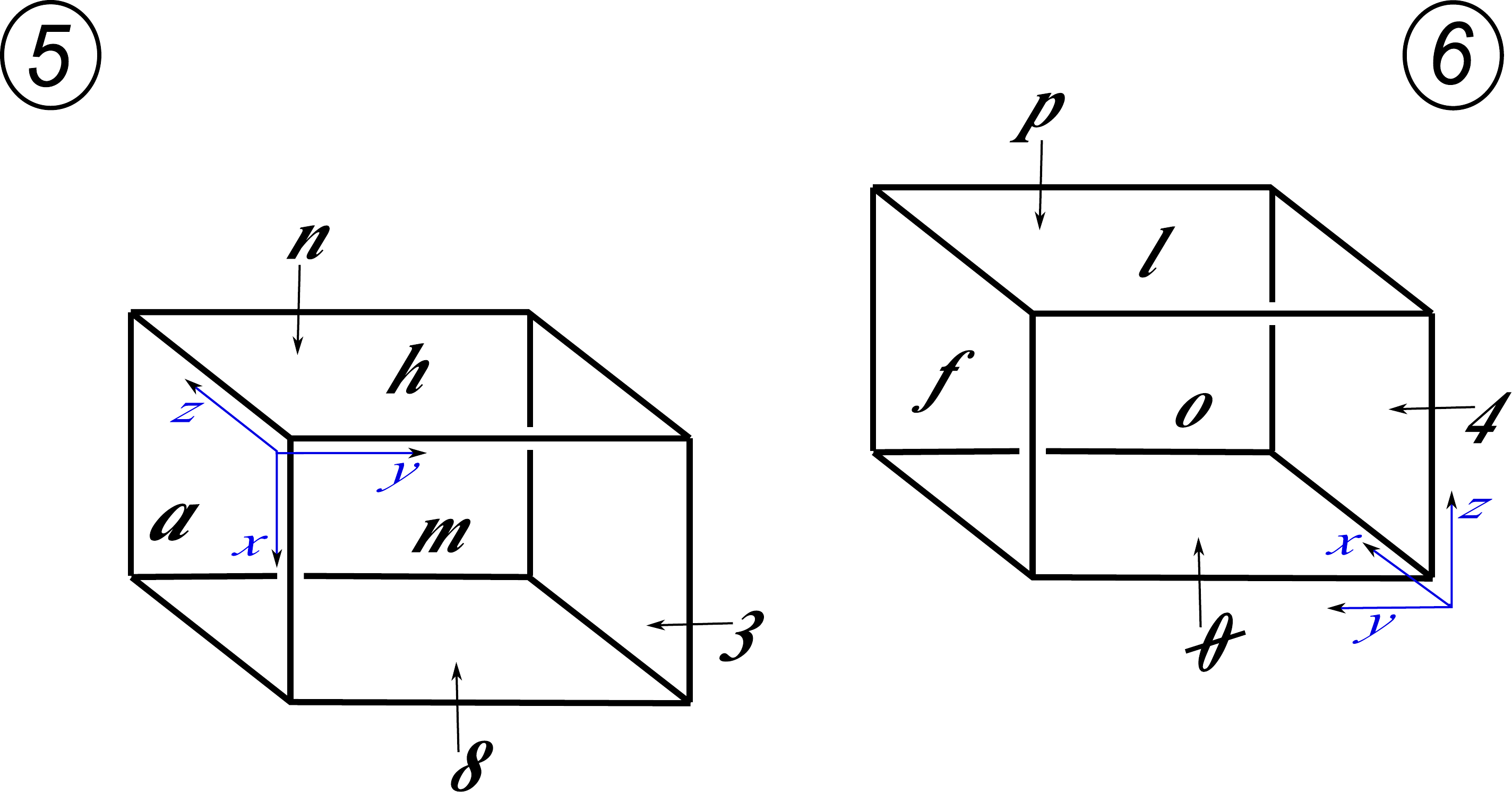}}
  \subfigure[]{\includegraphics[width = 7cm]{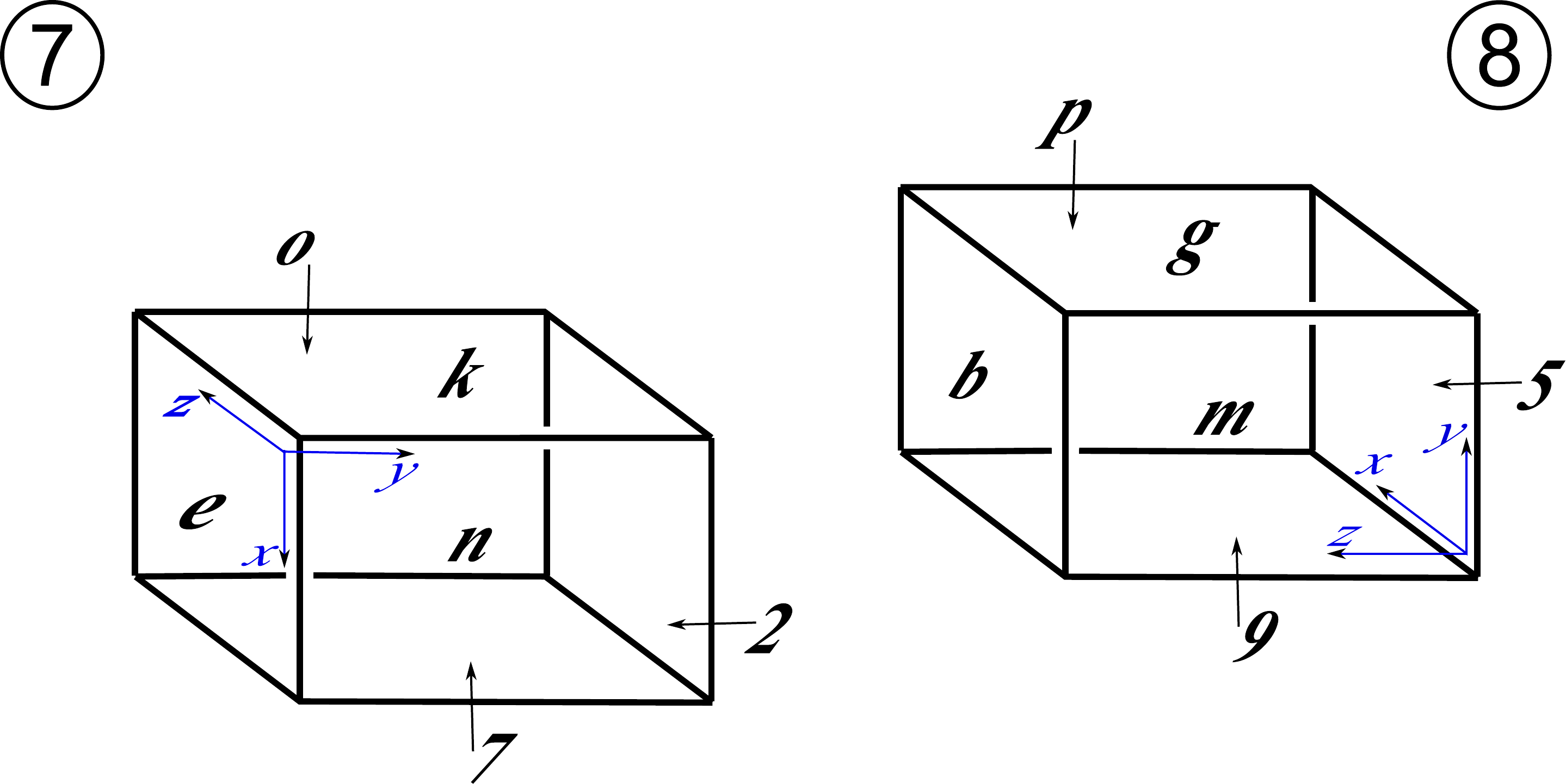}}
 \end{center}
 \caption{The pairing of the facets of $H$. Every row describes a pair of opposite facets, which are identified by the unique isometry which matches the $x,y,z$ frames.
The facet labels of $H$ are encircled. The square $2$-face labels are bold. All facets have the same orientation, except for $8$.}
 \label{cubulationM:fig}
\end{figure}
 
\begin{prop} The cubulation is orientable and has a unique cycle of $2$-faces.
\end{prop}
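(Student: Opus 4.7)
My plan is to verify both assertions by direct combinatorial inspection of the frame-matching identifications described in Fig.~\ref{cubulationM:fig}, following the conventions set up in Section~\ref{construction:subsection}.

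For orientability I would argue as follows. The hypercube $H$ carries a fixed orientation, which induces boundary orientations on all eight of its cubic facets. The caption tells us that the frames drawn on facets $1$ through $7$ agree with these induced boundary orientations, while the frame on facet $8$ is taken relative to the opposite one. Recall that a cubulation is orientable exactly when each facet pairing is orientation-reversing with respect to the induced boundary orientations. Thus I would read off, for each of the four pairings, the orientations of the two triples $(x,y,z)$ relative to the ambient cube's boundary orientation, and check that $(1,2)$, $(3,4)$, $(5,6)$ match two boundary-oriented cubes by an orientation-reversing isometry, while $(7,8)$ matches one boundary-oriented cube to an anti-boundary-oriented one by an orientation-preserving isometry. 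Either way the net identification on boundary-oriented cubes is orientation-reversing, so the cubulation is orientable.

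For the unique cycle of $2$-faces I would label the $24$ squares by the bold labels in Fig.~\ref{cubulationM:fig} and explicitly compute the permutation on squares induced by the four pairings. Each square $s$ lies in exactly two cubic facets; one of the pairings sends $s$ to a square $s'$ of the opposite facet, and $s'$ in turn lies in some further facet matched by a different pairing. Iterating this from any starting square produces a cycle, and by listing the resulting sequence and checking that every label $1, 2, \dots, 24$ appears exactly once, we conclude that there is precisely one cycle, of length $24$.

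The main obstacle is pure bookkeeping. With $24$ squares and four pairings, the computation must be organised carefully, because a single misread frame can split a cycle in two or merge two into one, changing the predicted number of cusps. The cleanest way to proceed is to first tabulate, for each of the six squares on every facet, the label of the square of the opposite facet to which it is sent by the corresponding pairing; only then should the orbit be traced. Once this table is produced the two claims become immediate finite verifications.
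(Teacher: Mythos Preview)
Your proposal is correct and follows essentially the same route as the paper's own proof: verify orientability by noting that facets $1$--$7$ carry the induced boundary orientation while $8$ carries the opposite, so the three pairings $(1,2),(3,4),(5,6)$ must be orientation-reversing in $\matR^3$ and $(7,8)$ orientation-preserving, making all four orientation-reversing on $\partial H$; then label the $24$ square $2$-faces, tabulate the identifications induced by the four pairings, and trace the resulting orbit to exhibit a single cycle of length $24$. The paper carries out exactly this computation, producing the explicit identification table and the cycle $\mathbf{a}\mathbf{1}\mathbf{8}\mathbf{p}\mathbf{7}\mathbf{k}\mathbf{m}\mathbf{\emptyset}\mathbf{c}\mathbf{3}\mathbf{f}\mathbf{6}\mathbf{l}\mathbf{n}\mathbf{5}\mathbf{e}\mathbf{9}\mathbf{g}\mathbf{2}\mathbf{b}\mathbf{o}\mathbf{h}\mathbf{d}\mathbf{4}\mathbf{a}$.
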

\begin{proof} 
Every facet inherits an orientation from the orientation of $H$. In Fig.~\ref{cube:fig}, all the facets are projected inside $8$. Therefore, the orientation of the facets $1$ to $7$ is the same, and that of $8$ is reversed. The isometries in Fig.~\ref{cubulationM:fig} pairing $(1,2)$, $(3,4)$ and $(5,6)$ are orientation-reversing as isometries in $\matR^3$, while that pairing $(7,8)$ is orientation-preserving. Thus, all the isometries are orientation-reversing if seen with the intrinsic orientation of $\partial H$: the cubulation is hence orientable.

We now prove that all the square $2$-faces form a unique cycle. We have labelled the $24$ distinct $2$-faces of $H$ with the symbols 
$$\mathbf a, \mathbf b, \mathbf c, \mathbf d, \mathbf e, \mathbf f, \mathbf 1, \mathbf 2, \mathbf 3, \mathbf 4, \mathbf 5, \mathbf 6, \mathbf g, \mathbf h, \mathbf k, \mathbf l, \mathbf 7, \mathbf 8, \mathbf 9, \mathbf 0, \mathbf m, \mathbf n, \mathbf o, \mathbf p.$$
The labels are shown in Fig.~\ref{cubulationM:fig}. The facet pairing induces the following identifications of the corresponding $2$-faces: 
\begin{align}\label{cubulationM:glueing1}
\begin{array}{cccc}
\mathbf{a}\rightarrow \mathbf{1}& \mathbf{f}\rightarrow \mathbf{6}& \mathbf{1}\rightarrow \mathbf{8}& \mathbf{c}\rightarrow \mathbf{\emptyset}\\
\mathbf{b}\rightarrow \mathbf{2}& \mathbf{e}\rightarrow \mathbf{5}& \mathbf{g}\rightarrow \mathbf{9}& \mathbf{k}\rightarrow \mathbf{7}\\
\mathbf{c}\rightarrow \mathbf{3}& \mathbf{d}\rightarrow \mathbf{4}& \mathbf{h}\rightarrow \mathbf{d}&
\mathbf{l}\rightarrow \mathbf{6}
\end{array}
\end{align}
\begin{align}\label{cubulationM:glueing2}
\begin{array}{cccc}
\mathbf{m}\rightarrow \mathbf{\emptyset}& \mathbf{n}\rightarrow \mathbf{l}& \mathbf{e}\rightarrow \mathbf{9}& \mathbf{2}\rightarrow \mathbf{g}\\
\mathbf{3}\rightarrow \mathbf{f}& \mathbf{a}\rightarrow \mathbf{4}& \mathbf{n}\rightarrow \mathbf{5}& \mathbf{o}\rightarrow \mathbf{b}\\
\mathbf{h}\rightarrow \mathbf{o}& \mathbf{8}\rightarrow \mathbf{p}& \mathbf{k}\rightarrow \mathbf{m}&
\mathbf{7}\rightarrow \mathbf{p}
\end{array}
\end{align}

Altogether these identifications produce a unique cycle:
\begin{equation}\label{cubulationM:cycle}
%w_{\mathscr{M}} = 
\mathbf{a}\mathbf{1}\mathbf{8}\mathbf{p}\mathbf{7}\mathbf{k}\mathbf{m}\mathbf{\emptyset}
\mathbf{c}\mathbf{3}\mathbf{f}\mathbf{6}\mathbf{l}\mathbf{n}\mathbf{5}\mathbf{e}\mathbf{9}\mathbf{g}
\mathbf{2}\mathbf{b}\mathbf{o}\mathbf{h}\mathbf{d}\mathbf{4}\mathbf{a}.
\end{equation} 
\end{proof}

The algorithm described in the previous section transforms the cubulation into a cusped hyperbolic four-manifold $\mathscr M$ with one cusp.

\subsection{Proof of Theorem \ref{V:teo}}
Thanks to the discussion carried over in the previous sections, the proof of Theorem \ref{V:teo} reduces to the following lemma, which deals with cubulations only and not with hyperbolic geometry. Let $\eta_k(n)$ be the number of combinatorially distinct cubulations with at most $n$ hypercubes whose $2$-dimensional faces form exactly $k$ cycles.

\begin{lemma} \label{n:lemma}
For every integer $k\geqslant 1$ there are two constants $C, n_0>1$ such that $\eta_k(n) > C^{n\ln n}$ for all $n>n_0$.
\end{lemma}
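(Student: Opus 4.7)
I would produce at least $C^{n\ln n}$ combinatorially distinct cubulations of $n$ hypercubes with exactly $k$ cycles of $2$-faces; since $\eta_k$ is non-decreasing in $n$ this gives the lemma. The construction proceeds in three stages.

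First, for each $k \geq 1$ I would exhibit a bounded-size \emph{base} cubulation $B_k$ with $m_k$ hypercubes and exactly $k$ cycles of $2$-faces. For $k = 1$ the cubulation $\mathscr M$ of Section \ref{manifold:subsection} serves. For $k \geq 2$, I would describe an elementary \emph{cycle-splitting} surgery on any cubulation that inserts one additional hypercube with pairings chosen so that one specified cycle is broken into two while all other cycles are preserved. Iterating the surgery $k-1$ times starting from $\mathscr M$ yields $B_k$.

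Second, I enlarge $B_k$ to a cubulation with $n$ hypercubes in a factorially parametrised way. Pick one facet pairing $\varphi : F_1 \to F_2$ of $B_k$, break it, and insert $n - m_k$ additional hypercubes $H_1, \ldots, H_{n - m_k}$ in a chain from $F_1$ to $F_2$ via the two opposite ``chain'' facets of each $H_i$. On each $H_i$ four of the six remaining ``side'' facets are paired internally via a fixed pattern, leaving two \emph{free} side facets. The $2(n - m_k)$ free facets, pooled across all the $H_i$, are then paired by an arbitrary perfect matching $\sigma$ together with a choice of facet isometry per pair. The number of such matchings is at least $(2(n - m_k) - 1)!!$, which by Stirling exceeds $C^{n\ln n}$ for some $C > 1$ and all large $n$. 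Combinatorial inequivalence of the outcomes follows from an intrinsic recovery argument: the subcomplex $B_k$ and the spine chain are canonically identifiable in the combinatorial data (for instance, as the locus of hypercubes with a given local pairing pattern), so two different matchings $\sigma$ can only produce equivalent cubulations through a bounded group of symmetries of the gadget, and after quotienting we still retain at least $C^{n\ln n}$ cubulations.

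\textbf{Main obstacle.} The delicate step is verifying that the cycle count is invariant under $\sigma$. Naively, permuting the free side facets can merge cycles of the base with the new cycles created by the chain, changing the total. I would address this by engineering the internal self-pairings of each $H_i$ so that all $24$ of its local $2$-faces merge into existing cycles in a manner independent of how the two free side facets are paired externally. This is the analogue of the neutral insertion used in \cite{CFMP}. An explicit local analysis of how the facet-isometries permute the six squares of a cubic facet should show that a careful choice of internal pairings makes the $\sigma$-dependence disappear at the level of cycle counting, so the global number of cycles remains $k$ for every $\sigma$. Once this local neutrality is established, Stirling's formula immediately yields the required super-exponential lower bound.
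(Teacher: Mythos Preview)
Your proposal has a genuine gap at exactly the point you flag as the main obstacle, and I do not see how to close it along the lines you suggest. Each free side facet carries six square $2$-faces, and pairing a free facet of $H_i$ with a free facet of $H_j$ via some isometry creates six new identifications among these squares. Whether such an identification merges two cycles, splits one cycle, or leaves the count unchanged depends on which cycles the two squares already belong to \emph{and} on the isometry used; since $\sigma$ ranges over all perfect matchings of a pool of $2(n-m_k)$ facets spread across different hypercubes, there is no local choice of internal self-pairings on a single $H_i$ that can neutralise this. In particular, the squares lying on a free facet cannot all be absorbed into ``existing cycles'' before $\sigma$ is chosen, because the very identifications that place them in cycles are the ones $\sigma$ provides. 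The analogy with \cite{CFMP} does not help here: there the inserted block is glued along \emph{all} of its boundary to fixed neighbours, so no free parameter remains. Your distinctness argument is also shaky: once arbitrary matchings among the free facets are allowed, the incidence graph need not single out the chain or the base $B_k$.

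The paper's argument avoids this difficulty by reversing the logic. Instead of fixing the cycle count first and then varying a parameter, it takes the known $C^{n\ln n}$ supply of simple $8$-regular graphs as the source of growth, realises each such graph $G$ as the incidence graph of \emph{some} cubulation (with uncontrolled cycle count), and only afterwards adjusts the cycle count to the target $k$ by repeatedly inserting a single hypercube at an edge (a ``flower''). A flower insertion either merges two chosen cycles into one (Proposition~\ref{flower:prop}) or adds new internal cycles (Proposition~\ref{flower2:prop}); at most a linear number of such insertions suffices. Distinctness survives because the original simple graph $G$ is recovered from the final incidence graph by deleting all self-loops. Thus the paper never needs the cycle count to be invariant under a free parameter; it post-processes each cubulation individually.
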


\vspace{2pt}\noindent\textit{Proof of Theorem \ref{V:teo} from Lemma \ref{n:lemma}. }\ 
By Theorem \ref{eq:teo} non-equivalent cubulations (with at least $3$ hypercubes) produce non-homeomorphic hyperbolic four-manifolds. 
\prfend

The rest of this section is devoted to the proof of Lemma \ref{n:lemma}. The \emph{incidence graph} $G$ of a cubulation $C$ is constructed by placing a vertex for every hypercube and an edge for every facet-pairing. Every vertex in $G$ has valence $8$. We start by the following proposition.

\begin{figure}
 \begin{center}
  \includegraphics[width = 6 cm]{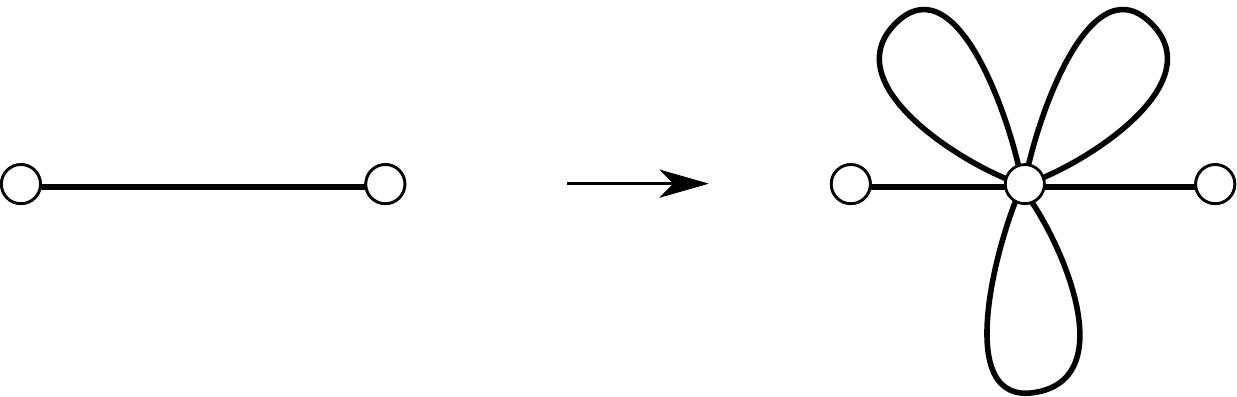}
 \end{center}
 \caption{Adding a flower to an edge of the incidence graph. The new cubulation $C'$ has one hypercube more than $C$.}
 \label{flower:fig}
\end{figure}

\begin{prop} \label{flower:prop}
Let a cubulation $C$ have $k>1$ cycles of square $2$-faces and incidence graph $G$. There is a cubulation $C'$ with $k-1$ cycles of $2$-faces, whose incident graph $G'$ is obtained from $G$ by applying the transformation depicted in Fig.~\ref{flower:fig} to some edge of $G$.
\end{prop}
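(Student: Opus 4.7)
The strategy is to perform a local surgery: choose an edge $e$ of $G$ and replace it by a flower centred at a new hypercube $H^*$, choosing the facet pairings on $H^*$ so that exactly two cycles of $C$ get merged in $C'$ while no new cycle is created by the internal structure of $H^*$.

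\textbf{Step 1: Find a ``multichromatic'' edge.} Call an edge $e$ of $G$ \emph{monochromatic} if its six induced square identifications all lie in a single cycle of $C$. Suppose, for contradiction, that every edge of $G$ is monochromatic, and let $c(e)$ be its cycle label. Inside any hypercube $H_i$, any two non-opposite cubic facets share a square $2$-face; if they are paired by distinct edges $e_1, e_2$ of $G$, then this shared $2$-face forces $c(e_1) = c(e_2)$. Since the facet-adjacency graph of a hypercube is connected, all edges of $G$ incident to $H_i$ share a common label $c_i$, and since $G$ itself is connected the labels $c_i$ all coincide. But then every square of $C$ lies in a single cycle, i.e. $k = 1$, contradicting the hypothesis. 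Hence some edge $e$ of $G$ is multichromatic, meaning its six induced identifications involve at least two distinct cycles $c_1$ and $c_2$ of $C$.

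\textbf{Step 2: Realise the flower.} Fix such an edge $e$, corresponding to $\varphi \colon F \to F'$, and select two induced identifications at $e$ lying in $c_1$ and $c_2$ respectively. Insert $H^*$ as in Figure~\ref{flower:fig}: designate opposite facets $A, A'$ of $H^*$ and glue $F \leftrightarrow A$ and $A' \leftrightarrow F'$ so that, with a ``baseline'' choice of self-loop isometries on the remaining three opposite-facet pairs of $H^*$, the composite routing $F \to A \to \cdots \to A' \to F'$ would recover $\varphi$. Now perturb one of the three self-loop isometries so that the composite routing agrees with $\varphi$ on four of the six identifications but transposes the two chosen ones; the transposition is produced by a single non-trivial isometry on one opposite-facet pair of $H^*$.

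\textbf{Step 3: Count cycles of $C'$.} Cycles of $C$ disjoint from $e$ are not affected. A standard check on the cycle graph shows that swapping two edges lying in distinct cycles merges them into one: tracing around, one sees that $c_1$ and $c_2$ become a single cycle in $C'$. At the same time, the self-loop isometries on $H^*$ are chosen so that every one of the $24$ squares of $H^*$ lies on a cycle meeting $A \cup A'$, and hence gets absorbed into the already-existing cycle structure rather than spawning a new internal cycle. Combining these observations gives exactly $k - 1$ cycles in $C'$.

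The main obstacle is the explicit verification underlying Step~2: one must exhibit isometries on the three petals of the flower that simultaneously transpose the two chosen identifications at $e$ and ensure that none of the $24$ squares of $H^*$ lies on a cycle disjoint from $A \cup A'$. This is a finite combinatorial check carried out directly on the pairings inside $H^*$.
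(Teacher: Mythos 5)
Your Step 1 is correct and complete (in fact it is a cleaner justification than the paper's brief ``it is easy to deduce''; you should only add that once two distinct cycles appear among the six faces of $K$, one can always choose the two representative faces to be \emph{non-opposite}, since if every pair of non-opposite faces were monochromatic then all six faces would lie in one cycle). The genuine gap is Step 2, and you have named it yourself: the entire content of the proposition is the existence of a ``partial cubulation'' on $H^*$ whose six internal strands realise the desired routing with no closed internal cycle, and you assert this as ``a finite combinatorial check'' without carrying it out. Worse, your baseline premise is false: if the three petal pairs of $H^*$ are glued by the natural choice (translations of opposite facets, as in Example \ref{1:example}), then \emph{no} strand passes through from $A$ to $A'$ --- every boundary square makes a U-turn back into the facet it started from, and the twelve squares of $H^*$ not lying on $A\cup A'$ organise into closed cycles entirely internal to $H^*$. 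This is precisely the behaviour the paper exploits in Proposition \ref{flower2:prop} to \emph{increase} the number of cycles, i.e.\ the exact opposite of what you need. So there is no ``baseline'' routing recovering $\varphi$ from which to perturb, and the claim that one non-trivial isometry on a single petal produces ``identity on four strands plus a transposition of two'' is unsupported; it is not even clear that this particular routing pattern is realisable at all.

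The paper closes this gap differently, and the mechanism is worth noting. It does not aim for a transposition: it takes the petal pairings verbatim from the explicit one-cycle cubulation of Section \ref{manifold:subsection} (Fig.~\ref{cubulationM:fig}), glueing the pairs $(3,4)$, $(5,6)$, $(7,8)$ and leaving $(1,2)$ open. Because that cubulation has a \emph{single} cycle through all $24$ squares, cutting it open at the six squares shared with facets $1$ and $2$ automatically decomposes all $24$ squares into six open arcs --- so the absence of internal closed cycles comes for free, with no case check. Direct inspection of the arcs (Fig.~\ref{braid:fig}) then shows the routing consists of four through-strands plus one U-turn on each side, each U-turn joining a pair of \emph{non-opposite} squares ($\mathbf b\leftrightarrow\mathbf d$ and $\mathbf 5\leftrightarrow\mathbf 6$); inserting this at $e$, aligned so the U-turns hit the two chosen faces of $K$ and their $\varphi$-images, cross-joins $f_1$ and $f_2$ into one cycle and leaves every other cycle unchanged. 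To repair your proof you would have to exhibit an explicit set of petal isometries and trace all $24$ squares of $H^*$ --- at which point you would essentially be reconstructing the paper's partial cubulation.
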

\begin{proof}
Every cycle of $2$-faces in $C$ may be seen as a closed path in $G$. This closed path may pass multiple times over a vertex or edge of $G$. Let us denote these cycles by $f_1,\ldots, f_k$. 

Every edge $e$ of $G$ corresponds to a pairing of $3$-dimensional cubes, identified to a single cube $K$ in the cubulation. The edge $e$ is traversed $6$ times by the paths $f_1,\ldots, f_k$: each passing corresponds to a $2$-face of $K$. A path may traverse $e$ multiple times in both directions.

Since $k>1$, it is easy to deduce that there is at least one edge $e$ which is traversed by at least two distinct paths: in other words, not all of the six faces in the corresponding cube $K$ belong to the same cycle. Then we may pick two non-opposite $2$-faces of $K$ that belong to distinct cycles, say $f_1$ and $f_2$.

We now want to insert a new hypercube at $e$ and construct a new cubulation in which $f_1$ and $f_2$ become a single cycle. In order to do so, we construct a ``partial cubulation'' as follows. Let us consider a new hypercube $H$ as in Fig.~\ref{cube:fig} and pair the facets $(3,4)$, $(5,6)$ and $(7,8)$ as described by Fig.~\ref{cubulationM:fig}, leaving the facets $1$ and $2$ unglued. The $2$-faces of the facets $1$ and $2$ are labelled respectively with the symbols $\mathbf a, \mathbf b, \mathbf c, \mathbf d, \mathbf e, \mathbf f$ and $\mathbf 1, \mathbf 2, \mathbf 3, \mathbf 4, \mathbf 5, \mathbf 6$. The partial cubulation induces a partition of the $24$ two-dimensional faces of $H$ into six sequences, obtained by cutting the cycle (\ref{cubulationM:cycle}) at six points, separating the pairs $\mathbf a \mathbf 1$, $\mathbf c \mathbf 3$, $\mathbf f \mathbf 6$, $\mathbf 5 \mathbf e$, $\mathbf 2 \mathbf b$, $\mathbf d \mathbf 4$. These six sequences are:
$$
\mathbf{1}\mathbf{8}\mathbf{p}\mathbf{7}\mathbf{k}\mathbf{m}\mathbf{\emptyset}
\mathbf{c}, \quad 
\mathbf{3}\mathbf{f}, \quad 
\mathbf{6}\mathbf{l}\mathbf{n}\mathbf{5}, \quad 
\mathbf{e}\mathbf{9}\mathbf{g}\mathbf{2}, \quad 
\mathbf{b}\mathbf{o}\mathbf{h}\mathbf{d}, \quad 
\mathbf{4}\mathbf{a}.
$$

\begin{figure}
 \begin{center}
  \includegraphics[width = 4.5 cm]{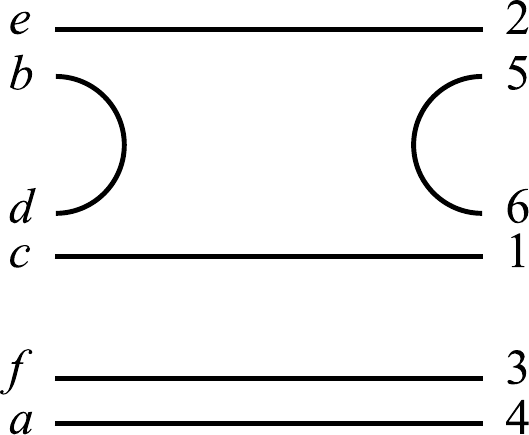}
 \end{center}
 \caption{The sequence of $2$-faces in the partial cubulation.}
 \label{braid:fig}
\end{figure}

We visualise the six paths in Fig.~\ref{braid:fig}. Each path connects a pair of $2$-faces belonging to the unglued facets $1$ and $2$. We note that there are only two paths connecting two square $2$-faces belonging to the same $3$-dimensional cube: they connect $\mathbf b$ to $\mathbf d$ and $\mathbf 5$ to $\mathbf 6$.

We now turn back to our original cubulation $C$. We visualise the six paths traversing the edge $e$ as in Fig.~\ref{braid2:fig}-(left): the three pairs of paths correspond to the three pairs of opposite faces in $K$. By hypothesis, the cycles $f_1$ and $f_2$ pass through two non-opposite faces. We now modify the cubulation $C$ by inserting the partial cubulation at the edge $e$ as suggested by Fig.~\ref{braid2:fig}. The incidence graph changes as in Fig.~\ref{flower:fig} and the cycles look now as shown in Fig.~\ref{braid2:fig}: the two cycles $f_1$ and $f_2$ are fused into a single cycle, and all other cycles remain the same. This proves the proposition.
\begin{figure}
 \begin{center}
  \includegraphics[width = 12 cm]{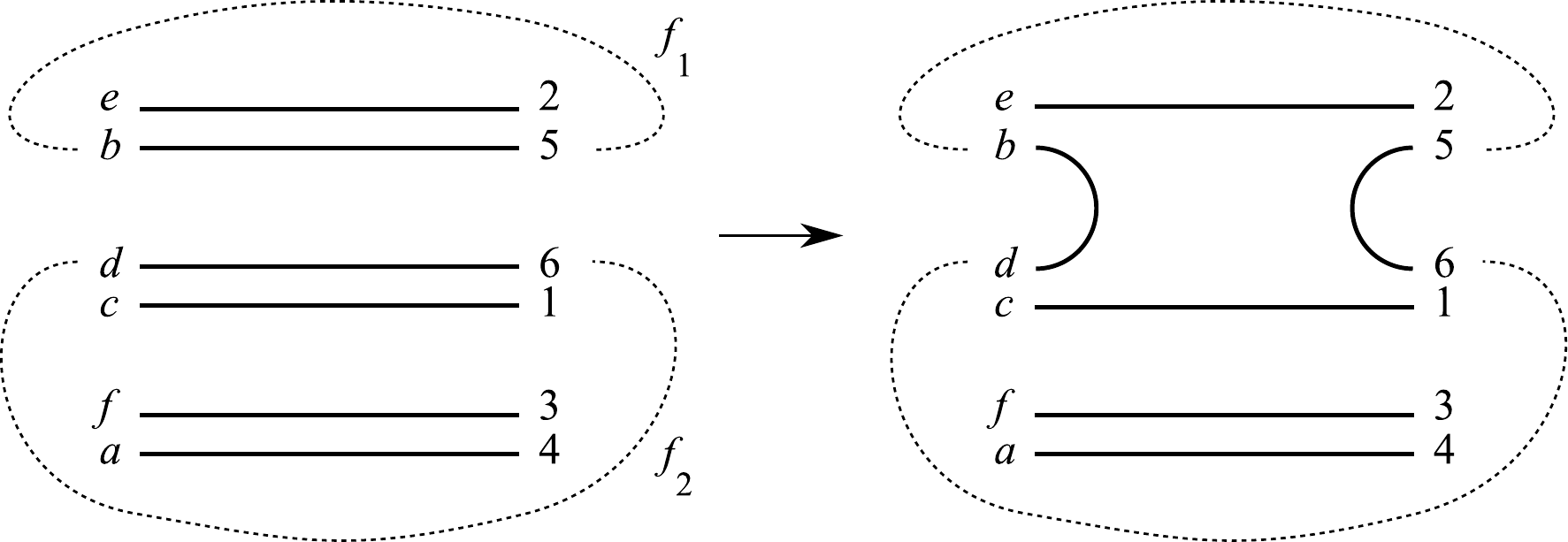}
 \end{center}
 \caption{By inserting the partial cubulation at an edge we reduce the number of cycles by one.}
 \label{braid2:fig}
\end{figure}
\end{proof}

We can now prove Lemma~\ref{n:lemma} in the case of $k=1$. 

\begin{lemma} \label{1:lemma}
There are two constants $C, n_0>1$ such that $\eta_1(n) \geqslant C^{n\ln n}$ for all $n>n_0$.
\end{lemma}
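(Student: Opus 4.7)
The plan is to convert a large family of arbitrary orientable cubulations into one-cycle cubulations by iteratively applying Proposition~\ref{flower:prop}, and to control the blow-up both in size and in fibre cardinality. First I would count the total number $\eta(n)$ of combinatorial classes of orientable cubulations on $n$ hypercubes. A labelled orientable cubulation is a perfect matching of the $8n$ abstract facets together with one of the $24$ orientation-reversing cube isometries per pair; the number of perfect matchings on $8n$ labels is $(8n)!/((4n)!\,2^{4n})$, whose logarithm by Stirling is $4n\ln n+O(n)$. Multiplying by $24^{4n}$ and dividing by the order $n!\cdot 48^n$ of the symmetry group (relabelling cubes and composing with internal cube isometries) gives $\log\eta(n)\ge 3n\ln n+O(n)$, hence $\eta(n)\ge C_0^{n\ln n}$ for some $C_0>1$ and every sufficiently large $n$.

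Next I would promote Proposition~\ref{flower:prop} to a deterministic map $\mathcal F$ sending every cubulation $C$ with $k=k(C)\le 24n$ cycles to a one-cycle cubulation $\mathcal F(C)$ on $n+k-1\le 25n$ hypercubes. This requires fixing canonical rules (for example, a lexicographic order on the edges of the incidence graph and a canonical rule for picking the two non-opposite merged $2$-faces at the chosen cube) that resolve every ambiguity in the iterative insertion. By construction, each inserted ``flower'' is a hypercube whose eight facets are glued in the rigid pattern of the partial cubulation in the proof of Proposition~\ref{flower:prop}: six of its facets are paired among themselves, and only two remain external.

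Finally I would estimate the fibres of $\mathcal F$. To recover $C$ from $C^{*}=\mathcal F(C)$ it suffices to identify which of the $m\le 25n$ hypercubes of $C^{*}$ are inserted flowers; since the local pairing pattern of a flower is rigid and combinatorially detectable, the number of preimages of $C^{*}$ is at most the number of subsets of its hypercubes, namely $2^{m}\le 2^{25n}$. Combining,
$$\eta_1(25n)\;\ge\;\eta(n)\big/2^{25n}\;\ge\;C_0^{n\ln n}\big/2^{25n}\;\ge\;C_1^{n\ln n}$$
for some $C_1>1$ and all large $n$, since $C_0^{n\ln n}$ dominates $2^{25n}$; setting $N=25n$ we obtain $\eta_1(N)\ge C^{N\ln N}$ for some $C>1$ and $N$ large, as required.

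The main obstacle will be to make the reduction map $\mathcal F$ genuinely well-defined and its reconstruction recipe unambiguous. In particular, one must check that the canonical choices remain consistent under the combinatorial changes introduced by earlier flower insertions, and that distinct starting cubulations do not collapse to the same output along different insertion histories; the rigidity of the flower's internal pairing pattern is precisely what should allow one to read off, from $C^{*}$ alone, which hypercubes are flowers and in what order they were inserted.
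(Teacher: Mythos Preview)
Your approach is workable in principle but differs from the paper's and carries exactly the gap you flag at the end. The paper avoids the canonicalisation and fibre-counting altogether. It invokes Bollob\`as's theorem that the number of \emph{simple} $8$-regular graphs on $n$ vertices already grows like $C^{n\ln n}$, picks for each such graph $G$ any cubulation with incidence graph $G$, and then applies Proposition~\ref{flower:prop} at most $24n$ times. The key observation is that each flower insertion adds to the incidence graph a new vertex carrying three loops; since the starting graph $G$ was simple (no loops at all), one recovers $G$ from the output graph $G'$ simply by deleting every edge with coinciding endpoints. Thus non-isomorphic $G$'s yield one-cycle cubulations with non-isomorphic incidence graphs, hence non-equivalent cubulations---no canonical representatives and no $2^{25n}$ fibre bound are needed. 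The injection lives at the level of incidence graphs, not of cubulations.

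Your direct Stirling count of $\eta(n)$ is correct, and the $2^{25n}$ fibre bound is generous enough that the asymptotics survive, but the step ``define a deterministic $\mathcal F$ on equivalence classes'' is where the real work hides. A lexicographic rule on edges presupposes a labelling, which is precisely what combinatorial equivalence quotients out; to make $\mathcal F$ well-defined you must first fix a canonical representative for each class (say the lex-least encoding), and then verify that removing a putative flower-subset $S$ from a representative of $C^*$ yields a well-defined equivalence class---in particular that the isometry reconnecting the two external facets after excising a flower is uniquely determined by the data visible in $C^*$. These points are checkable but tedious. The paper's detour through simple regular graphs sidesteps all of them at the modest cost of citing one external counting result, and buys a one-line reconstruction (``delete the petals'') in place of your subset-enumeration argument.
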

\begin{proof}
Recall that a \emph{regular $v$-graph} is a simple graph where every vertex has valency $v$. By \emph{simple} we mean that every edge connects two distinct vertices and distinct edges connect distinct pairs of vertices. 

By \cite{Bollobas}, the number of non-isomorphic $v$-regular graphs with $n$ vertices grows like $C^{n \ln n}$ for any fixed $v\geqslant 3$ and $C>1$ depending only on $v$. Therefore, it would have been sufficient to prove that every $8$-regular graph $G$ arises as the incidence graph of a cubulation with only one $2$-face. However, we are unable to prove exactly this statement, but we can use the previous proposition to prove a slightly weaker version of it, which is enough for our purposes. 

Let $G$ be a $8$-regular graph with $n$ vertices. Let $C$ be any cubulation with incidence graph $G$. The cubulation $C$ has at most $24n$ cycles of square $2$-faces: Proposition \ref{flower:prop} implies that by adding at most $24n$ flowers at its edges, as shown in Fig.~\ref{flower:fig}, we may transform $G$ into a graph $G'$, which is the incidence graph of a cubulation with only one $2$-face.

Every flower increases the number of vertices of $G$ by one. Therefore every $8$-regular graph $G$ with $n$ vertices gives rise to a graph $G'$ with at most $25n$ vertices which is the incidence graph of some cubulation with one $2$-face. The graph $G$ can be reconstructed from $G'$ by eliminating all the petals, \emph{i.e.}~the edges with coinciding endpoints: therefore non-isomorphic regular graphs $G$ and $H$ give rise to non-isomorphic graphs $G'$ and $H'$ and hence to non-isomorphic cubulations.

Let $f(n)$ be the number of non-isomorphic simple $8$-regular graphs. We know that $f(n) > {C_0}^{n\ln n}$ for some $C_0, n_0>1$ and all $n>n_0$. We have proved that $\eta_1(25 n) \geqslant f(n) > {C_0}^{n \ln n}$. The latter implies that $\eta_1(n) \geqslant {C_1}^{n \ln n}$, with some $C_0 > C_1 > 1$ and $n_1 > n_0 > 1$.
\end{proof}

We now turn to the general case, and so we will need the following proposition.

\begin{prop} \label{flower2:prop}
Let $C$ be a cubulation with incidence graph $G$. For every $k>0$ there is a cubulation $C'$ with more than $2k$ cycles of $2$-faces, whose incident graph $G'$ is obtained from $G$ by applying $k$ times the move of Fig.~\ref{flower:fig}.
\end{prop}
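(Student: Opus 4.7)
The plan is to follow the strategy of Proposition \ref{flower:prop} but to engineer each flower insertion so that the cycle count increases by at least two, giving the required bound after $k$ iterations. Since the initial cubulation $C$ has at least one cycle, a net gain of at least $2k$ cycles suffices to produce more than $2k$ cycles at the end.

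Two sources of cycle gain are available at each flower insertion: \emph{internal cycles}, i.e., cycles of $2$-faces lying entirely inside the inserted hypercube $H$ (these depend on the choice of pairings between the paired facets $3$--$4$, $5$--$6$, $7$--$8$ of $H$); and \emph{splittings}, i.e., the effect on pre-existing cycles of the current cubulation that traverse the edge $e$ where the flower is inserted. The first source gives a very robust construction: if the three self-loop facet pairs of $H$ are glued via the identity translations along the corresponding coordinate axes, a direct computation on the combinatorics of the hypercube shows that the twelve interior $2$-faces (those disjoint from facets $1$ and $2$) decompose into three cycles of length four, each lying entirely within $H$. These three cycles contribute automatically to the total cycle count, independently of the surrounding combinatorics.

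The second source exploits the short-circuit paths of the partial cubulation used in Proposition \ref{flower:prop}: the $\alpha$--$\alpha$ and $\beta$--$\beta$ paths, when inserted so that their endpoints lie on two $2$-faces of the same cycle of the current cubulation, \emph{split} that cycle into two (whereas if their endpoints lie in different cycles, they merge them, as exploited in Proposition \ref{flower:prop}). The insertion isometries -- two elements of the $48$-element cube symmetry group -- give enough freedom to realize essentially any non-opposite pair of boundary $2$-faces as the $\alpha$--$\alpha$ or $\beta$--$\beta$ pair. At each step we therefore pick an edge $e$, identify pairs of non-opposite $2$-faces of the cube at $e$ lying in common cycles, and insert the flower realizing these choices; the remaining four $\alpha$--$\beta$ paths can be checked not to re-merge the pieces.

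The main obstacle is the interplay between the two sources in degenerate configurations. In particular, if at some edge $e$ the existing cubulation already realizes six length-two cycles through $e$, the identity-pairing insertion can lose up to five boundary cycles, exceeding the three internal cycles it gains; and if every cycle passes through $e$ through a single $2$-face, no splitting via short-circuits is possible. The plan is to combine both sources adaptively: at each step we choose $e$ and the variant of the partial cubulation so that at least one source gives a gain of at least two. A pigeonhole argument on the $24n'$ $2$-faces distributed among the $c'$ cycles of the current cubulation (with $n'$ hypercubes) shows that such a favourable edge exists whenever $c'$ is not already much larger than $2k$, which is precisely the regime in which new cycles must be created. Verifying that this adaptive choice can always be made, and that the four $\alpha$--$\beta$ paths do not undo the intended split, constitutes the technical heart of the argument.
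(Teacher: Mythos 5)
Your first source of cycles is precisely the paper's proof: the paper uses the partial cubulation obtained from Example \ref{1:example} --- a hypercube with three pairs of opposite facets glued by translations and the fourth pair left free --- and your count of its interior $2$-faces is right: the twelve $2$-faces disjoint from the two free facets fall into three cycles of length four, each lying entirely inside the inserted hypercube. (The paper asserts two inner cycles at this point; the correct count is three, and either number suffices.) The genuine gap is everything you build after that. You have misread what must be proved: the proposition asks for a cubulation $C'$ whose \emph{total} number of cycles exceeds $2k$, not for a net gain of two cycles per insertion relative to the cycle count of $C$. This misreading drives you to the second source (splitting pre-existing cycles via the short-circuit paths of the partial cubulation of Proposition \ref{flower:prop}), to worst-case bookkeeping about boundary cycles being merged away, and finally to an adaptive pigeonhole scheme whose decisive steps --- that a favourable edge always exists, and that the four $\alpha$--$\beta$ paths do not re-merge the intended split --- you explicitly leave unverified. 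As written the proposal is therefore not a proof: what you call its technical heart is exactly what is missing, and it is not at all clear that the adaptive choice survives the degenerate configurations you yourself point out.

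None of that machinery is needed. Take any edge $e$ of $G$ and insert $k$ copies of the translation partial cubulation in series at $e$: this realises $k$ applications of the move of Fig.~\ref{flower:fig}. Every face of an inner cycle lies only on the self-glued (petal) facets of its own copy, and each insertion modifies only the gluings along the chain at $e$, so the three inner cycles of each copy persist in $C'$; moreover, in $C'$ every $2$-face lies on exactly two paired facets, so the remaining faces organise into at least one further cycle. Hence $C'$ has at least $3k+1>2k$ cycles, no matter how drastically the rewiring at $e$ merges the original cycles of $C$ --- the losses you worry about are irrelevant because no lower bound in terms of the cycles of $C$ is claimed. This absolute bound is also all the application requires: in the proof of Lemma \ref{n:lemma} one first inflates the number of cycles above $k$ and then merges them down, one at a time, by Proposition \ref{flower:prop}.
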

\begin{proof}
First we construct a partial cubulation as in the proof of Proposition \ref{flower:prop}, but this time we use the cubulation from Example~\ref{1:example}: we take a hypercube $H$ and identify three pairs of opposite facets by a translation, leaving the fourth pair unglued. In contrary to the one used in the proof of Proposition \ref{flower:prop}, this partial cubulation contains two inner cycles of $2$-faces. Take any edge $e$ of $G$. Let us insert $k$ subsequent copies of this partial cubulation at $e$: the resulting cubulation $C'$ contains more than $2k$ cycles of $2$-faces. The graph $G$ changes by adding $k$ flowers at $e$, as required.
\end{proof}

We finish the proof of Lemma \ref{n:lemma}.

\begin{lemma} 
For every integer $k > 1$ there are two constants $C, n_0>1$ such that $\eta_k(n) \geqslant C^{n\ln n}$ for all $n>n_0$.
\end{lemma}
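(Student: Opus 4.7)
The plan is to extend the proof of Lemma~\ref{1:lemma} to general $k \geqslant 1$. As there, let $f(n)$ denote the number of non-isomorphic simple $8$-regular graphs on $n$ vertices; we have $f(n) \geqslant C_0^{n\ln n}$ for some $C_0>1$ and all $n$ sufficiently large. For each such graph $G$, pick any cubulation $C_G$ with incidence graph $G$, and denote by $j \leqslant 24n$ its number of cycles of $2$-faces.

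We modify $C_G$ into a cubulation $C_G'$ with exactly $k$ cycles of $2$-faces. If $j \geqslant k$, iterate Proposition~\ref{flower:prop} precisely $j-k$ times: each application reduces the cycle count by one and adds one hypercube (together with one petal in the incidence graph). If instead $j < k$, first apply Proposition~\ref{flower2:prop} by inserting sufficiently many copies of the partial cubulation from Example~\ref{1:example}, for instance $k$ of them, yielding more than $2k$ cycles, and only then reduce down to $k$ cycles via Proposition~\ref{flower:prop}. In either case $C_G'$ has at most $25(n+k)$ hypercubes and exactly $k$ cycles of $2$-faces.

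To conclude, we need the assignment $G\mapsto C_G'$ to be injective. As in Lemma~\ref{1:lemma}, both flower moves insert new vertices joined by \emph{petals} (edges with coincident endpoints) in the incidence graph. Since $G$ is simple and hence petal-free, the incidence graph of $C_G'$ consists of $G$ together with all the petals introduced during the modification; erasing every petal recovers $G$ uniquely, so non-isomorphic $G$ give rise to non-equivalent $C_G'$. Therefore
\[
\eta_k\bigl(25(n+k)\bigr)\geqslant f(n)\geqslant C_0^{n\ln n},
\]
and setting $N=25(n+k)$ yields $\eta_k(N)\geqslant C^{N\ln N}$ for some $C=C(k)>1$ and all large $N$. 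The main obstacle is the case $j<k$, where one must know that iterating Proposition~\ref{flower2:prop} reliably drives the number of cycles above $k$; this is precisely what its ``more than $2k$'' conclusion delivers, which is why we cited that proposition in preparation for this step.
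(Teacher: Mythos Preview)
Your proof is correct and follows essentially the same approach as the paper: use Propositions~\ref{flower2:prop} and~\ref{flower:prop} to adjust the cycle count to exactly $k$ while adding only $O(n+k)$ hypercubes, then recover $G$ from the modified incidence graph by removing petals, exactly as in Lemma~\ref{1:lemma}. The only cosmetic difference is that the paper always first boosts the cycle count above $2k$ via Proposition~\ref{flower2:prop} before reducing, whereas you introduce a case split and skip the boost when $j \geqslant k$; both routes give the same bound.
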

\begin{proof}
The proof proceeds as for Lemma \ref{1:lemma}. Let $C$ a cubulation with incidence graph $G$. By adding $k$ flowers we get a cubulation $C'$ with more than $2k$ cycles of $2$-faces thanks to Proposition \ref{flower2:prop}. Then, in accordance with Proposition \ref{flower:prop}, by adding at most $25(n+k)$ more flowers we finally get a cubulation $C''$ with exactly $k$ distinct cycles of square $2$-faces. The rest of the argument is analogous to that of Lemma \ref{1:lemma}.
\end{proof}

The proof of Theorem \ref{V:teo} is now complete.

\subsection{Cusp shapes}
In Section \ref{manifold:subsection} we have introduced an orientable cubulation $C$ with one hypercube $H$ and one cycle of $2$-faces, determining a hyperbolic manifold $\mathscr{M}$ with one cusp. We are now interested in determining the topology of its cusp section.

\begin{prop}
The maximal cusp section of $\mathscr{M}$ is the flat $3$-torus obtained by identifying the opposite faces of a right-angled parallelepiped of size $2\times 2\times 24$.
\end{prop}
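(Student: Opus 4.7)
The plan is to derive the statement directly from Proposition \ref{monodromy:prop}. Since the cubulation $C$ of Section \ref{manifold:subsection} is orientable and has a unique cycle of $h=24$ two-dimensional faces (the cycle (\ref{cubulationM:cycle})), that proposition already tells us that the maximal cusp section of $\mathscr{M}$ is isometric to $T \times [0,24]/_\psi$ for some monodromy $\psi$ belonging to the three-element set of orientation-preserving, tessellation-preserving isometries of $T$, namely $\mathrm{id}$, $-I$, and the $\pi/2$-rotation. The flat $3$-torus arising from the $2\times 2\times 24$ parallelepiped with opposite faces identified is precisely the case $\psi=\mathrm{id}$; hence the statement reduces to showing that the monodromy of the cycle (\ref{cubulationM:cycle}) is trivial.

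To verify this, I would pick a basepoint square at the start of the cycle, fix an orthonormal $2$-frame on it, and propagate the frame through all $24$ transitions prescribed by the identifications (\ref{cubulationM:glueing1})–(\ref{cubulationM:glueing2}). Each elementary transition $\psi_i$ is the restriction to a square $2$-face of one of the four facet pairings drawn in Figure~\ref{cubulationM:fig}; because every such pairing is the unique Euclidean isometry matching the depicted $x,y,z$ frames, the rotation induced on the square by $\psi_i$ can be read off unambiguously as one of $\mathrm{id}$, $R_{\pi/2}$, $R_\pi$, $R_{3\pi/2}$. The monodromy $\psi$ is then the composition of the resulting $24$ rotations, which a direct tally identifies with the identity.

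The main obstacle is purely the bookkeeping required for the $24$ elementary transitions, but no conceptual difficulty arises: each step is entirely determined by the frames in Figure~\ref{cubulationM:fig}, and the three candidate monodromies in Proposition~\ref{monodromy:prop} are distinguished by tracking the image of a single coordinate direction. Once $\psi=\mathrm{id}$ is established, Proposition~\ref{monodromy:prop} identifies the cusp section with $T \times [0,24]$ glued by the identity between $T\times 0$ and $T\times 24$, which is exactly the right-angled parallelepiped of size $2\times 2\times 24$ with opposite faces identified by translations, as required.
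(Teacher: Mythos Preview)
Your proposal is correct and follows essentially the same route as the paper: reduce the statement via Proposition~\ref{monodromy:prop} to the claim that the monodromy of the cycle~(\ref{cubulationM:cycle}) is trivial, then verify this by choosing a frame on the initial square $\mathbf a$ and transporting it through the $24$ transitions of Figure~\ref{cubulationM:fig}. The paper carries out this bookkeeping pictorially in Figure~\ref{cubulationM-monodromy:fig}; one minor caveat is that the individual transitions $\psi_i$ need not all be rotations of the square (only their composition is guaranteed orientation-preserving by the orientability of $C$), so you should track a full ordered frame rather than just a rotation angle---but this does not affect the argument.
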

\begin{proof}
We need to determine the monodromy of the respective cycle of $2$-faces. In order to do so, we develop the eight cubic faces of the hypercube $H$ as in Fig.~\ref{cubulationM-monodromy:fig} and we fix a marking frame on the square face $\mathbf{a}$ in the cube $1$ as follows:
\begin{center}
\includegraphics[width= 1cm]{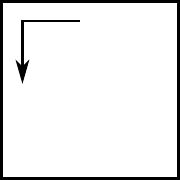}
\end{center}
Then we accurately carry the frame along the cycle 
$$\mathbf{a}\mathbf{1}\mathbf{8}\mathbf{p}\mathbf{7}\mathbf{k}\mathbf{m}\mathbf{\emptyset}
\mathbf{c}\mathbf{3}\mathbf{f}\mathbf{6}\mathbf{l}\mathbf{n}\mathbf{5}\mathbf{e}\mathbf{9}\mathbf{g}
\mathbf{2}\mathbf{b}\mathbf{o}\mathbf{h}\mathbf{d}\mathbf{4}\mathbf{a}$$
until we find out that the final frame matches the initial one (marked with a circle in the figure). Therefore the monodromy is the identity map and Proposition \ref{monodromy:prop} implies that the maximal section is as required.
\end{proof}

\begin{cor} The $3$-torus bounds geometrically a hyperbolic manifold.
\end{cor}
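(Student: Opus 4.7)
The plan is to observe that this corollary is essentially immediate from the construction of $\mathscr{M}$ together with the proposition just proved. By the definition given in the introduction, a flat $3$-manifold bounds geometrically if it is diffeomorphic to a cusp section of some single-cusped hyperbolic $4$-manifold, so what we need is to exhibit such a manifold whose (unique) cusp section is diffeomorphic to $T^3$.

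First I would recall that the cubulation $C$ constructed in Section \ref{manifold:subsection} has a unique cycle of $2$-faces, and hence, by the $1$--$1$ correspondence between cycles of $2$-faces of a cubulation and cusps of the associated hyperbolic manifold, the resulting manifold $\mathscr{M}$ has exactly one cusp. Next I would invoke the preceding proposition, which identifies the maximal cusp section of $\mathscr{M}$ with the flat $3$-manifold obtained by identifying the opposite faces of a right-angled $2\times 2\times 24$ parallelepiped via translations. Topologically this flat manifold is $T^3$, regardless of the specific edge lengths. Therefore $\mathscr{M}$ is a finite-volume orientable hyperbolic $4$-manifold with a single cusp whose cross-section is diffeomorphic to $T^3$, which by definition means that the $3$-torus bounds $\mathscr{M}$ geometrically.

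There is essentially no obstacle: all the work has already been done in the two previous sections. The only point worth emphasizing in the write-up is that the monodromy computation from the preceding proof gave the identity matrix, which by Proposition \ref{monodromy:prop} forces the cusp section to be a genuine product $T\times[0,h]$ rather than a nontrivial torus bundle over $S^1$, so that the flat manifold is $T^3$ and not one of the other orientable flat $3$-manifolds appearing in Proposition \ref{h:prop}.
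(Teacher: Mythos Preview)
Your proposal is correct and matches the paper's approach exactly: the corollary is stated without proof in the paper because it follows immediately from the preceding proposition (the unique cusp section of $\mathscr M$ is a flat $3$-torus) together with the definition of ``bounds geometrically''. Your write-up simply makes explicit the reasoning the paper leaves implicit.
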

\begin{figure}
 \begin{center}
  \subfigure[]{\includegraphics[width = 6.2cm]{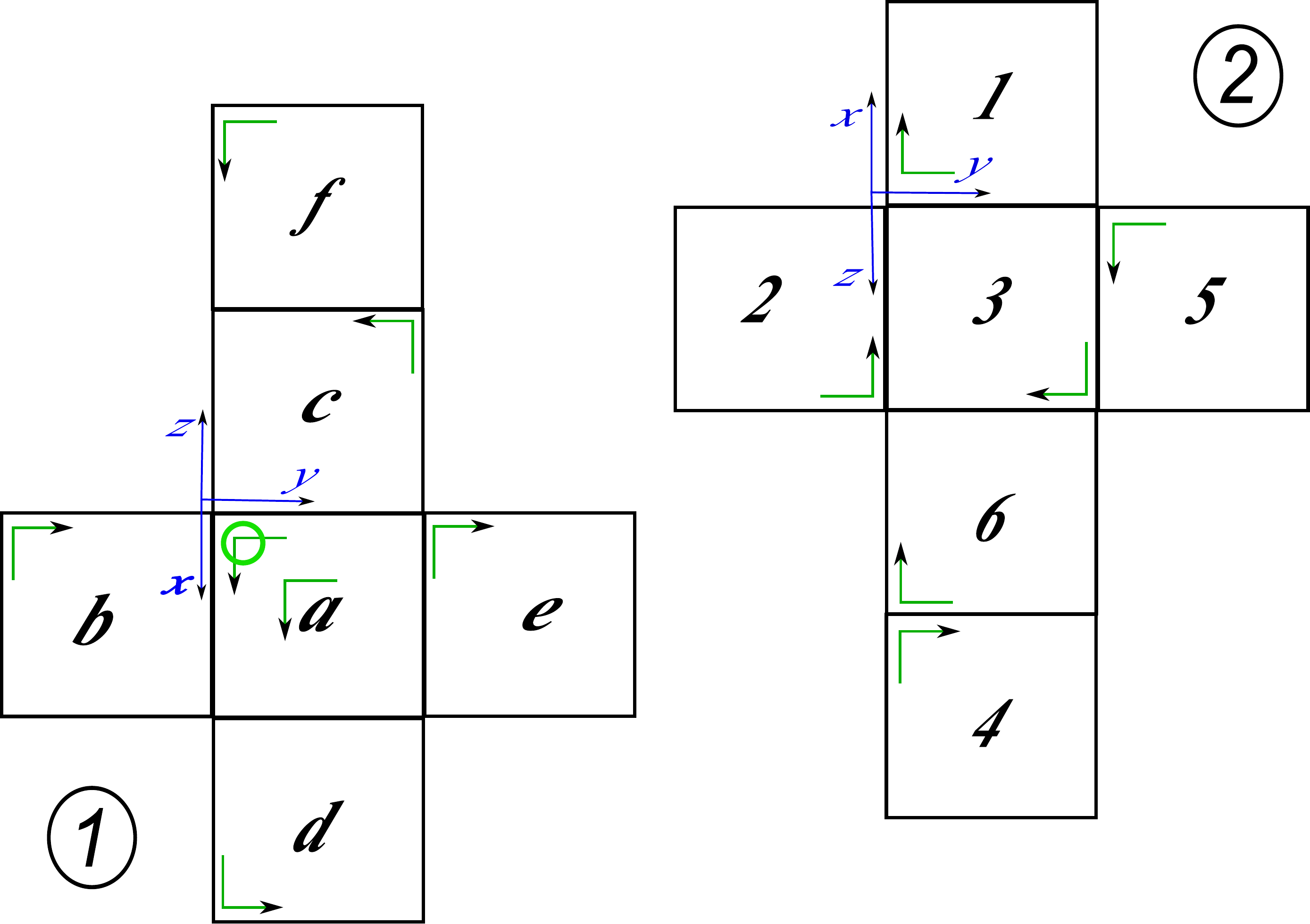}}
  \subfigure[]{\includegraphics[width = 6.2cm]{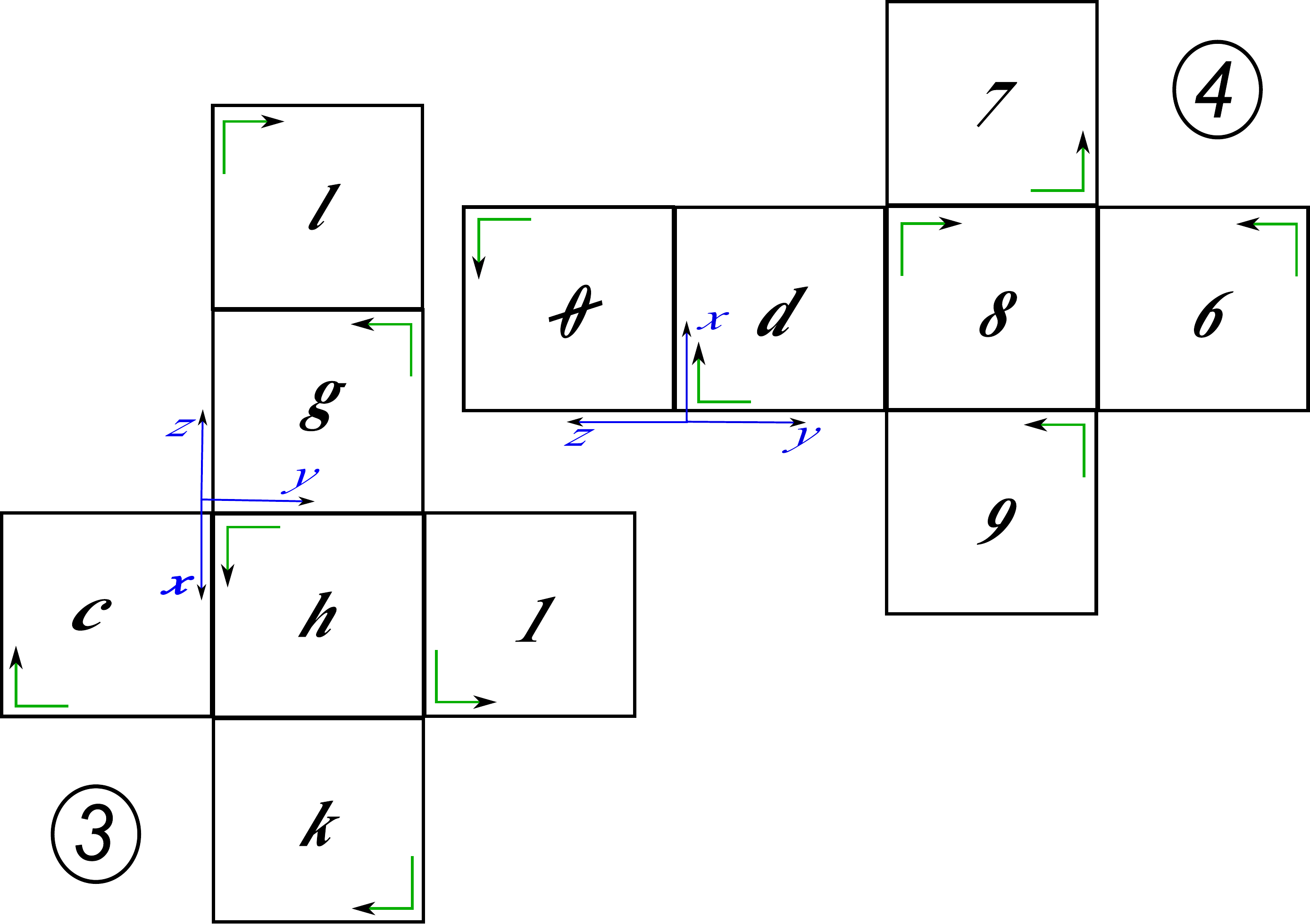}}
  \subfigure[]{\includegraphics[width = 6.2cm]{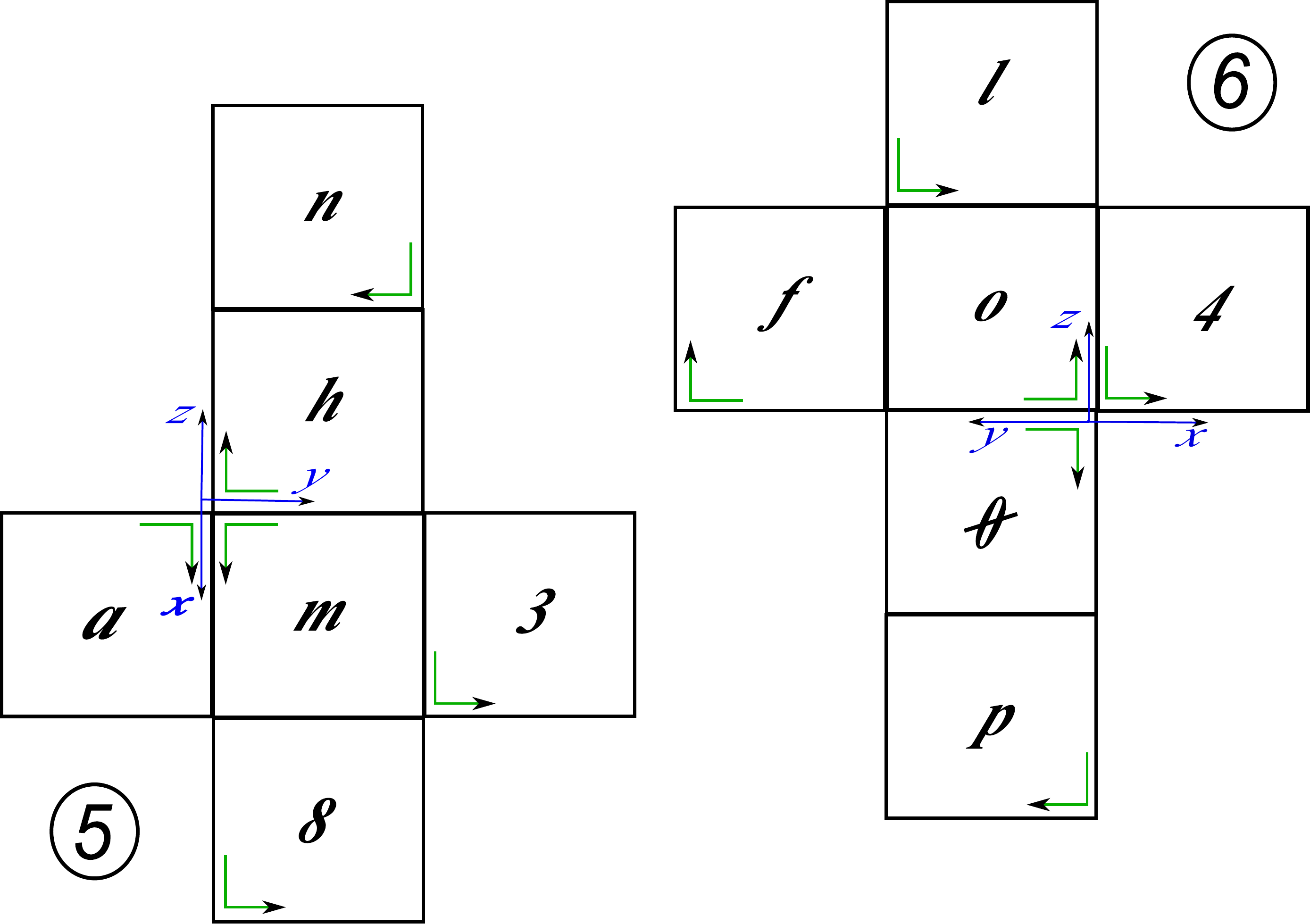}}
  \subfigure[]{\includegraphics[width = 6.2cm]{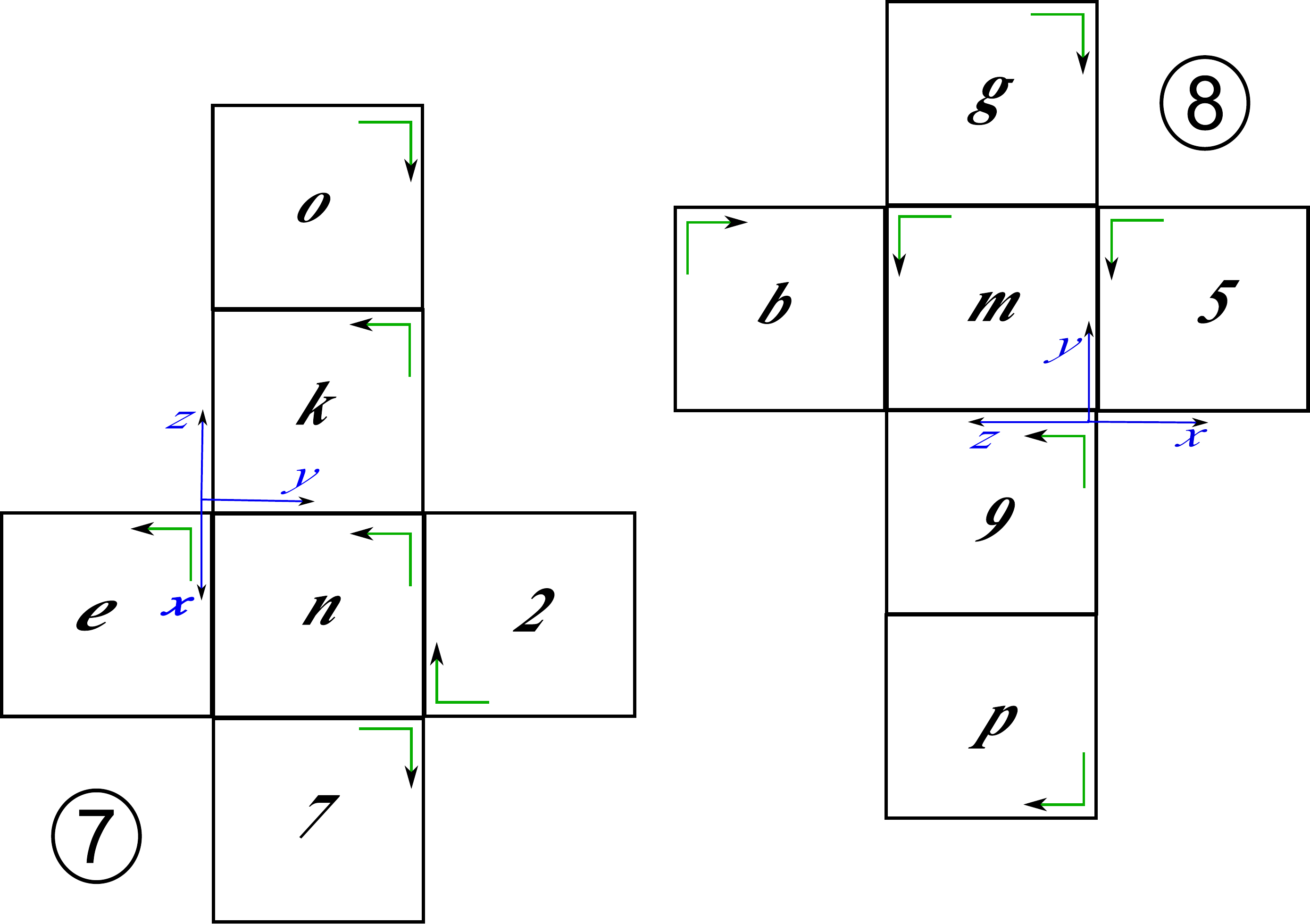}}
 \end{center}
 \caption{The pairing of the facets of $H$ and the evolution of the frame associated with the square $2$-face labelled $\mathbf{a}$. The facets of $H$ are flattened.}
 \label{cubulationM-monodromy:fig}
\end{figure}

Which flat manifolds are realisable as cusp sections of some manifolds arising from our construction? We already know from Proposition \ref{h:prop} that only three homeomorphism types may be realised. We have obtained the $3$-torus with one cusps, and we can also obtain the other two types with two cusps: 
 
\begin{prop}\label{2cusps:prop}
Let $X$ be a torus bundle over $S^1$ with monodromy 
\begin{equation*}
%\psi_X = 
%\left(\begin{array}{cc}
%1& 0\\
%0& 1
%\end{array}\right) \mbox{ , } 
\left(\begin{array}{cc}
-1& 0\\
0& -1
\end{array}\right) \mbox{ or }
\left(\begin{array}{cc}
0& 1\\
-1& 0
\end{array}\right).
\end{equation*}
There exists a hyperbolic $4$-manifold with two cusps, both having a cross-section homeomorphic to $X$.
\end{prop}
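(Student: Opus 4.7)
The strategy is to construct, for each of the two specified monodromies, an orientable cubulation whose associated hyperbolic four-manifold has exactly two cusps with the prescribed cross-section. By Proposition~\ref{monodromy:prop} this reduces to exhibiting a cubulation containing precisely two cycles of square $2$-faces, each having monodromy conjugate to the specified rotation.

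I would proceed by modifying the one-hypercube cubulation $C$ of Section~\ref{manifold:subsection}, which produces a single cycle of length $24$ with trivial monodromy. The idea is to replace one of the four facet-pairings from Figure~\ref{cubulationM:fig} (say the pairing $(7,8)$) by the composition of the original isometry with a non-trivial rotational symmetry $\sigma$ of the cubic facet. Since $\sigma$ lies in $SO(3)$, the new pairing remains orientation-reversing with respect to the ambient orientation of $\partial H$, and so by Lemma~\ref{orientation:lemma} the modified cubulation remains orientable. For the monodromy $\mathrm{diag}(-1,-1)$ I would choose $\sigma$ of order $2$, such as a $\pi$-rotation about a coordinate axis of the cube (which swaps two pairs of opposite faces); for the $\pi/2$-monodromy, $\sigma$ would be a rotation of order $4$ about a coordinate axis, cyclically permuting four of the six square faces. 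Should a single modification not suffice, a second hypercube or further pairing modifications can be introduced to fine-tune the outcome.

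The verification proceeds along the lines of Section~\ref{manifold:subsection}: one traces the modified identifications along cycle~(\ref{cubulationM:cycle}) to confirm that it splits into exactly two subcycles of length $12$, and then transports a marking frame along each, as in Figure~\ref{cubulationM-monodromy:fig}, to read off the monodromy. When this succeeds, Proposition~\ref{volume:prop} yields a hyperbolic four-manifold of volume $\tfrac{16}{3}\pi^2$ with two cusps, each having cross-section $T\times[0,12]/_\psi$ of volume $48$, and these are the two homeomorphism types described in the statement.

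The principal obstacle is the combinatorial bookkeeping: the effect of $\sigma$ on both the cycle count and the transported frame is not immediately deducible from the algebra of $\sigma$ alone, so one must verify by direct inspection that both the resulting cycle structure (exactly two cycles of length $12$) and the monodromy on each cycle are as required. Since the rotation group of the cube has only $24$ elements and the candidate cubulations are highly symmetric, this reduces to a finite case-by-case check, and a suitable $\sigma$ is expected to exist in each of the two cases; the symmetric role played by the two resulting cycles ensures that they acquire the same monodromy, as needed.
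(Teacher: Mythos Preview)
Your approach is in the same spirit as the paper's: construct explicit one-hypercube orientable cubulations with two cycles of square $2$-faces having the desired monodromies, then invoke Proposition~\ref{monodromy:prop}. The paper does exactly this, but rather than twisting a single pairing of the cubulation $\mathscr{M}$, it exhibits two entirely new cubulations $\mathscr{K}$ and $\mathscr{L}$ (with different facet pairings, not opposite pairs) and verifies by frame-tracing that each has two cycles with monodromies respectively conjugate to the order-$4$ and order-$2$ rotations.

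The gap in your proposal is that it remains a strategy rather than a proof. Composing the $(7,8)$-pairing with a rotation $\sigma$ permutes the six identifications contributed by that pairing, but the effect on the cycle structure of~(\ref{cubulationM:cycle}) is not predictable from the order of $\sigma$ alone: it depends on where those six transitions sit inside the length-$24$ cycle. There is no a~priori reason the cycle must split into exactly two pieces, let alone two of length $12$. Your symmetry heuristic (``the symmetric role played by the two resulting cycles ensures that they acquire the same monodromy'') is also unsupported; indeed, in the paper's manifold $\mathscr{K}$ the two cycles carry \emph{inverse} monodromies $\left(\begin{smallmatrix}0&1\\-1&0\end{smallmatrix}\right)$ and $\left(\begin{smallmatrix}0&-1\\1&0\end{smallmatrix}\right)$, which happen to be conjugate but are not literally equal. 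So the claim requires the explicit check you defer, and until that check is performed for some specific $\sigma$, the argument is incomplete. Since only finitely many $\sigma$ are available, you may well succeed---but you must actually exhibit the working cubulation, as the paper does.
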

\begin{proof}
We consider the orientable cubulations with one hypercube $H$ depicted in Fig.~\ref{cubulationK:fig} and Fig.~\ref{cubulationL:fig} (also, see Fig.~\ref{cube:fig}): these give rise to two hyperbolic manifolds $\mathscr{K}$ and $\mathscr{L}$. Each cubulation has two cycles of square $2$-faces: the monodromies are calculated in Fig.~\ref{cubulationK-monodromy:fig} and Fig.~\ref{cubulationL-monodromy:fig} and they are respectively
$$\left(\begin{array}{cc}
0& 1\\
-1& 0
\end{array}\right), 
\left(\begin{array}{cc}
0& -1\\
1& 0
\end{array}\right)
\quad {\rm and} \quad
\left(\begin{array}{cc}
-1& 0\\
0 & -1
\end{array}\right), 
\left(\begin{array}{cc}
-1 & 0\\
0& -1
\end{array}\right). $$
\end{proof}

\begin{figure}
 \begin{center}
  \subfigure[]{\includegraphics[width = 6.2cm]{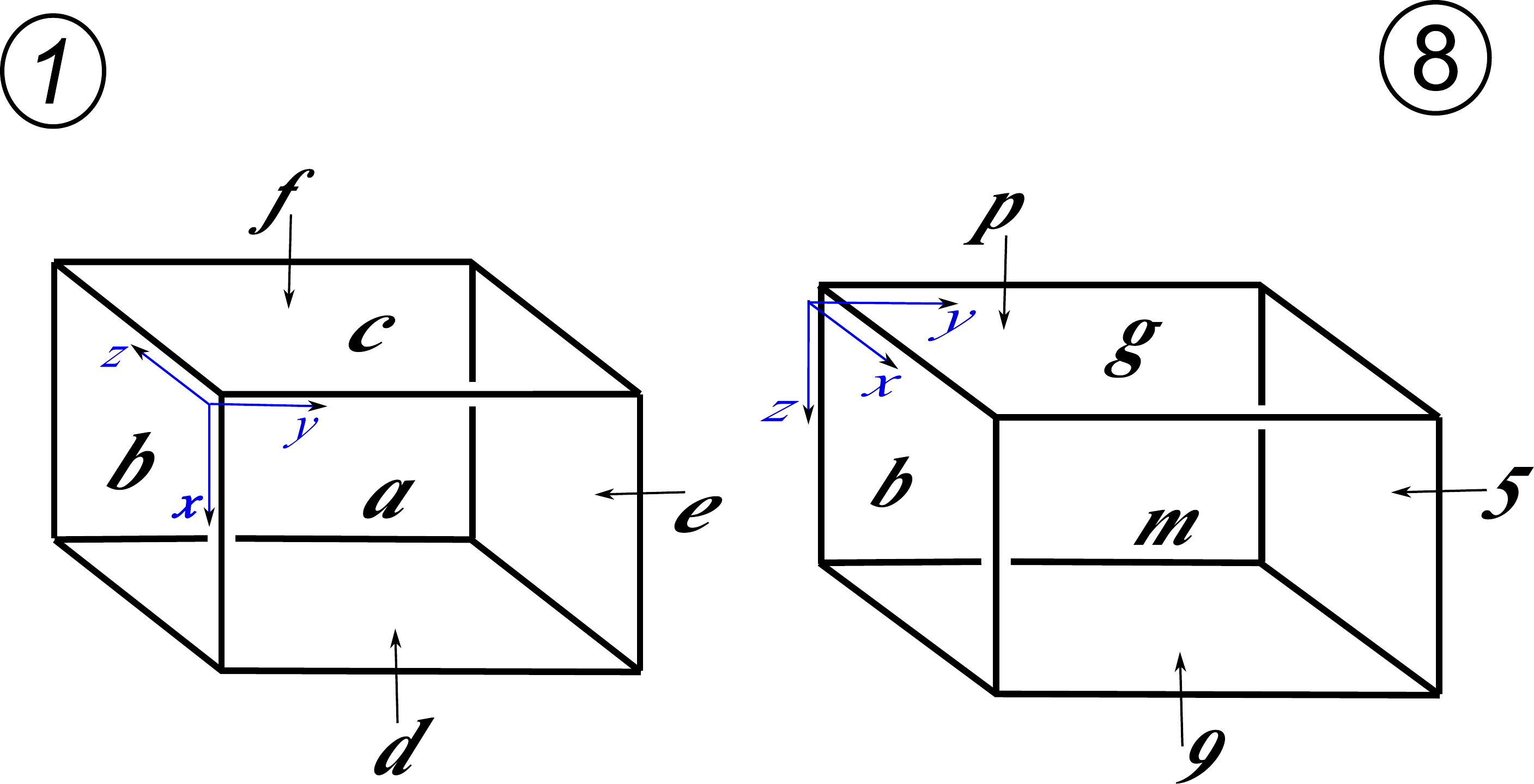}}
  \subfigure[]{\includegraphics[width = 6.2cm]{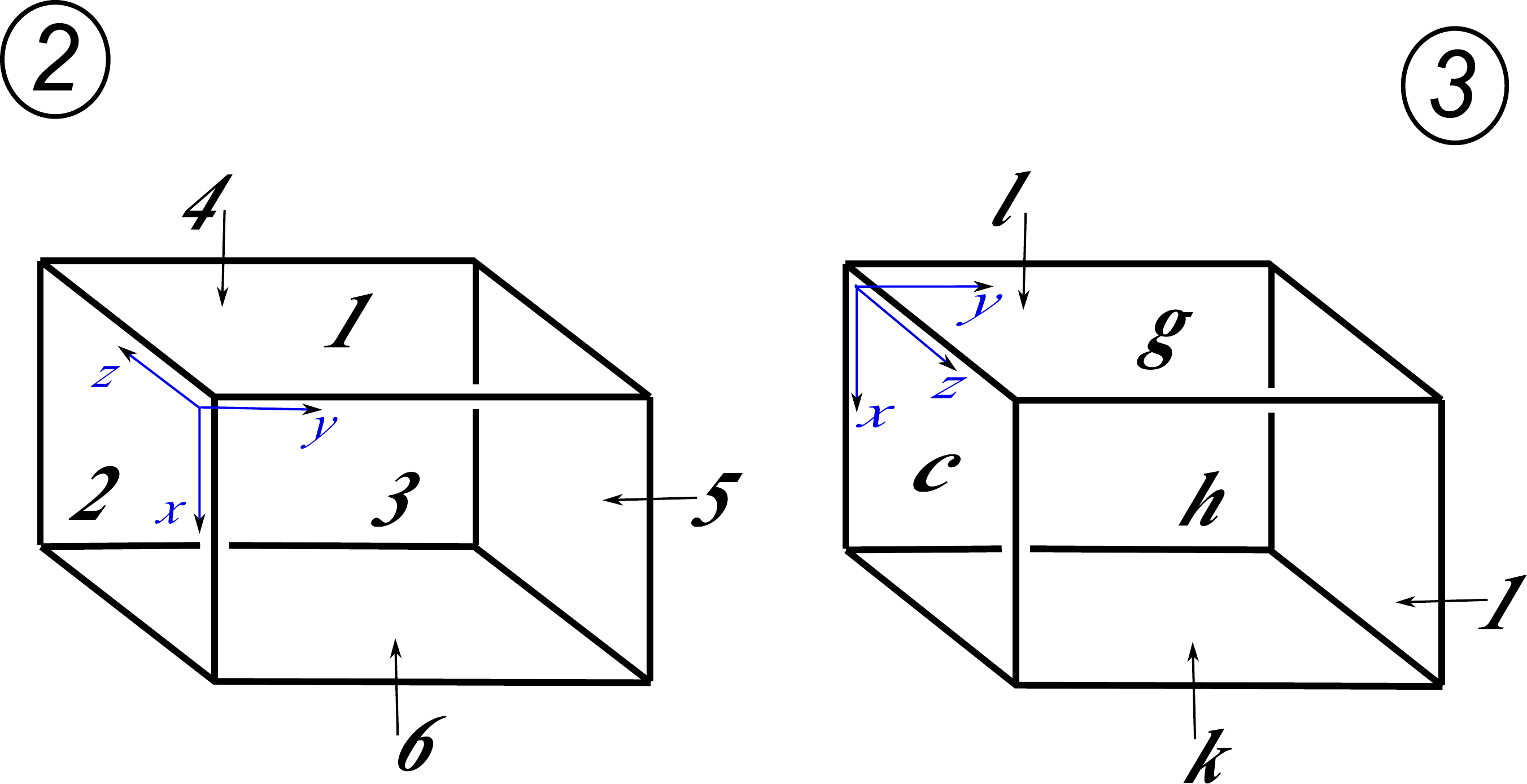}}
  \subfigure[]{\includegraphics[width = 6.2cm]{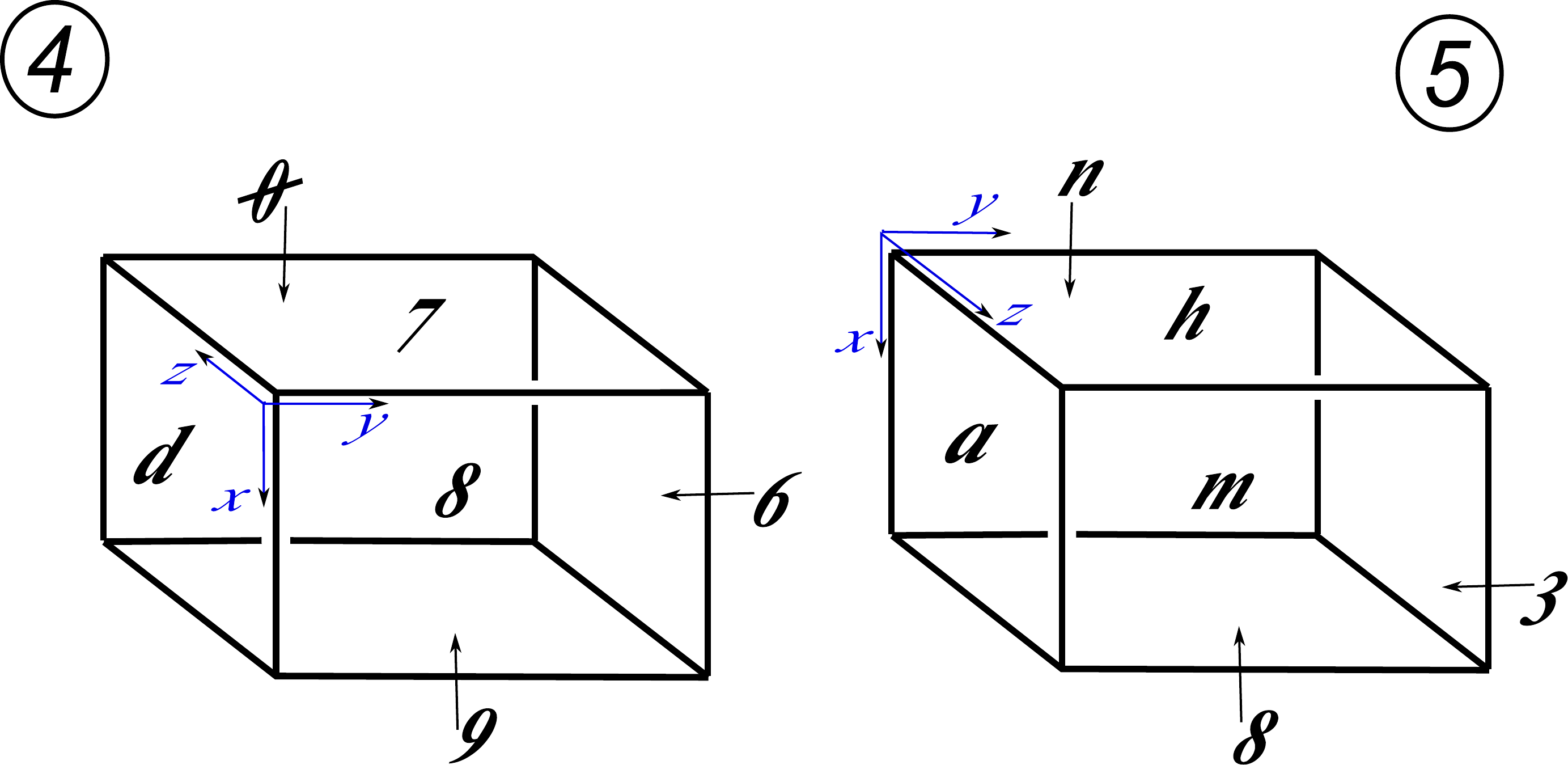}}
  \subfigure[]{\includegraphics[width = 6.2cm]{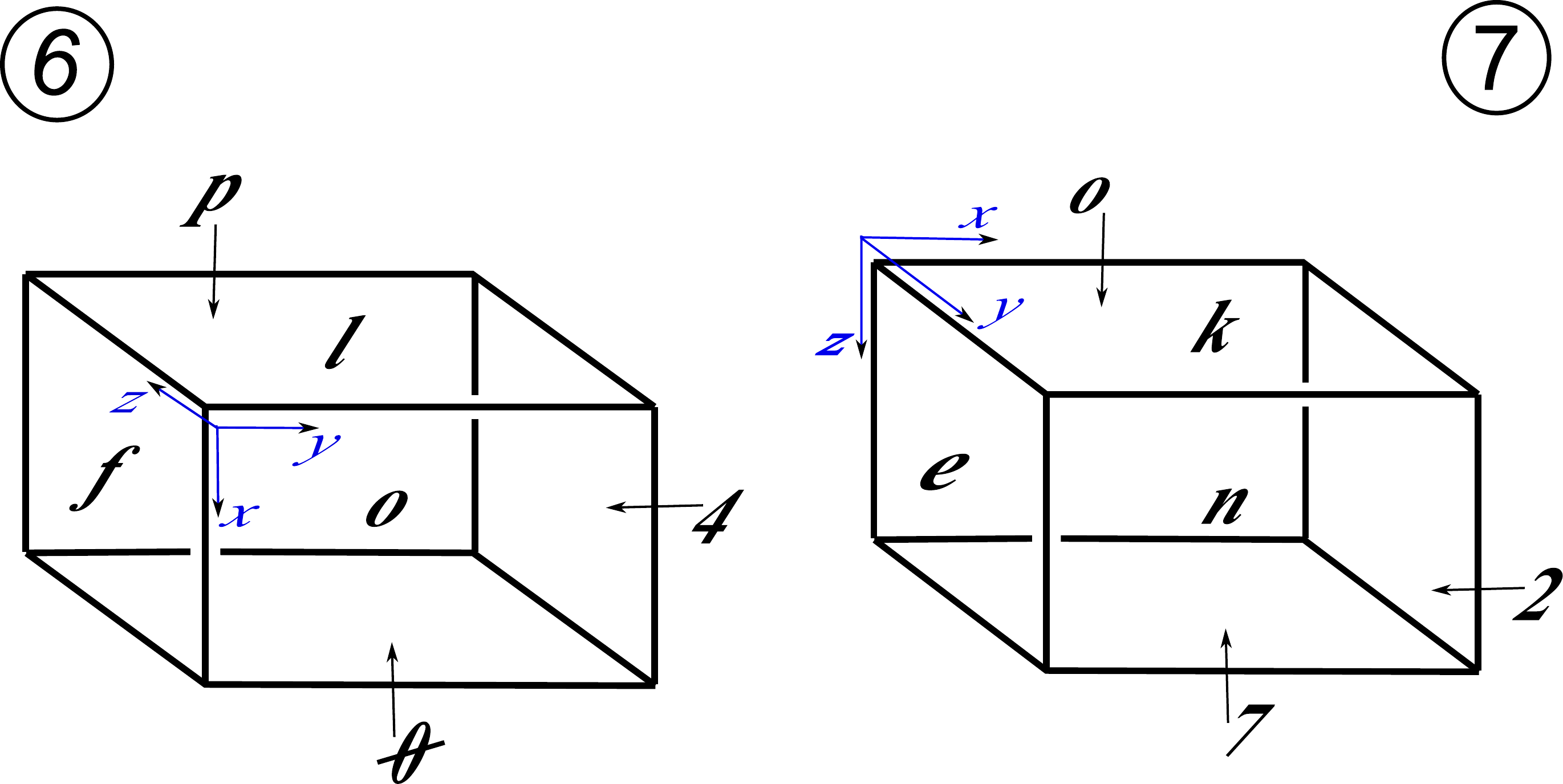}}
 \end{center}
 \caption{The cubulation giving rise to the manifold $\mathscr{K}$ from Proposition~\ref{2cusps:prop}}
 \label{cubulationK:fig}
\end{figure}

\begin{figure}
 \begin{center}
  \subfigure[]{\includegraphics[width = 6.2cm]{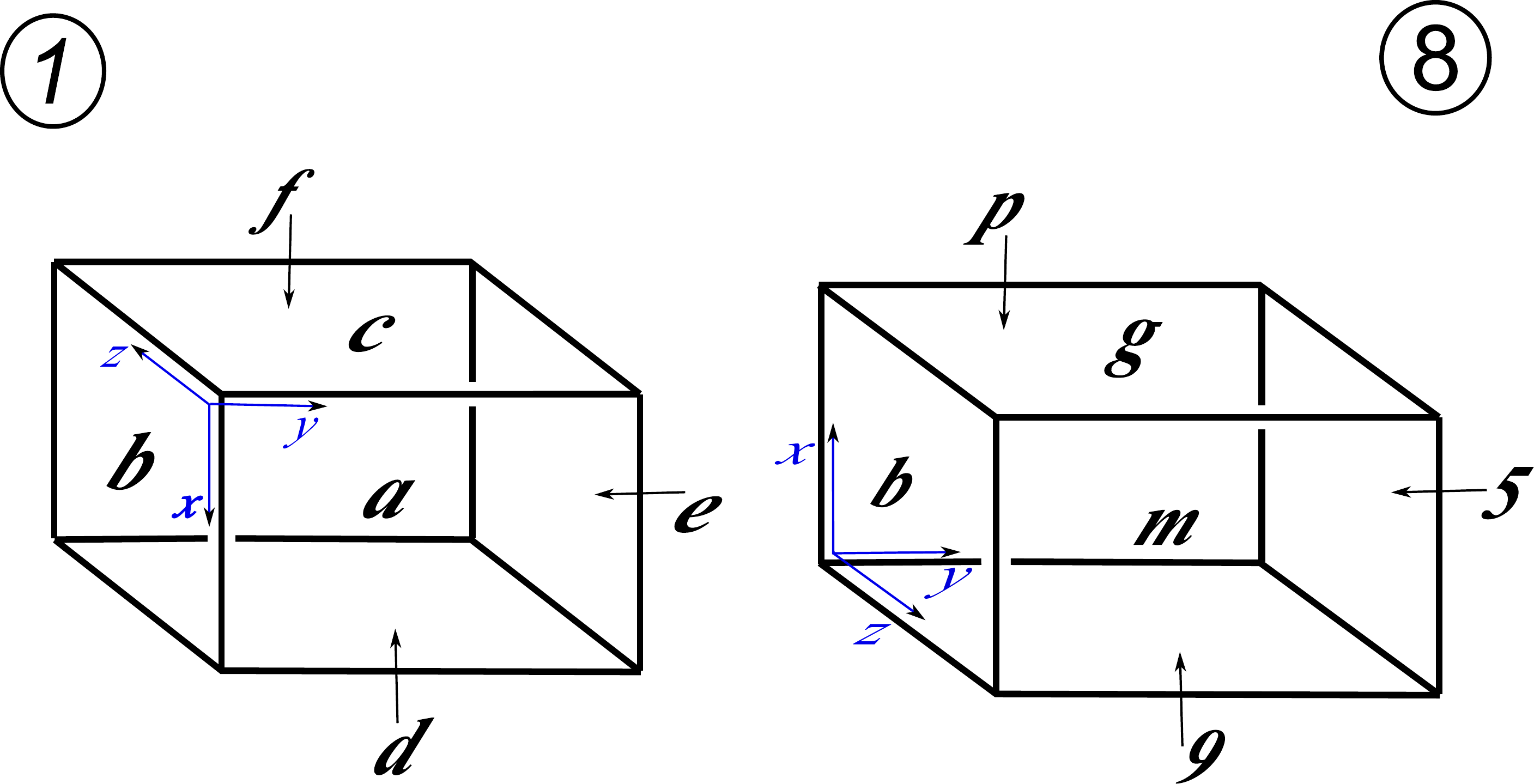}}
  \subfigure[]{\includegraphics[width = 6.2cm]{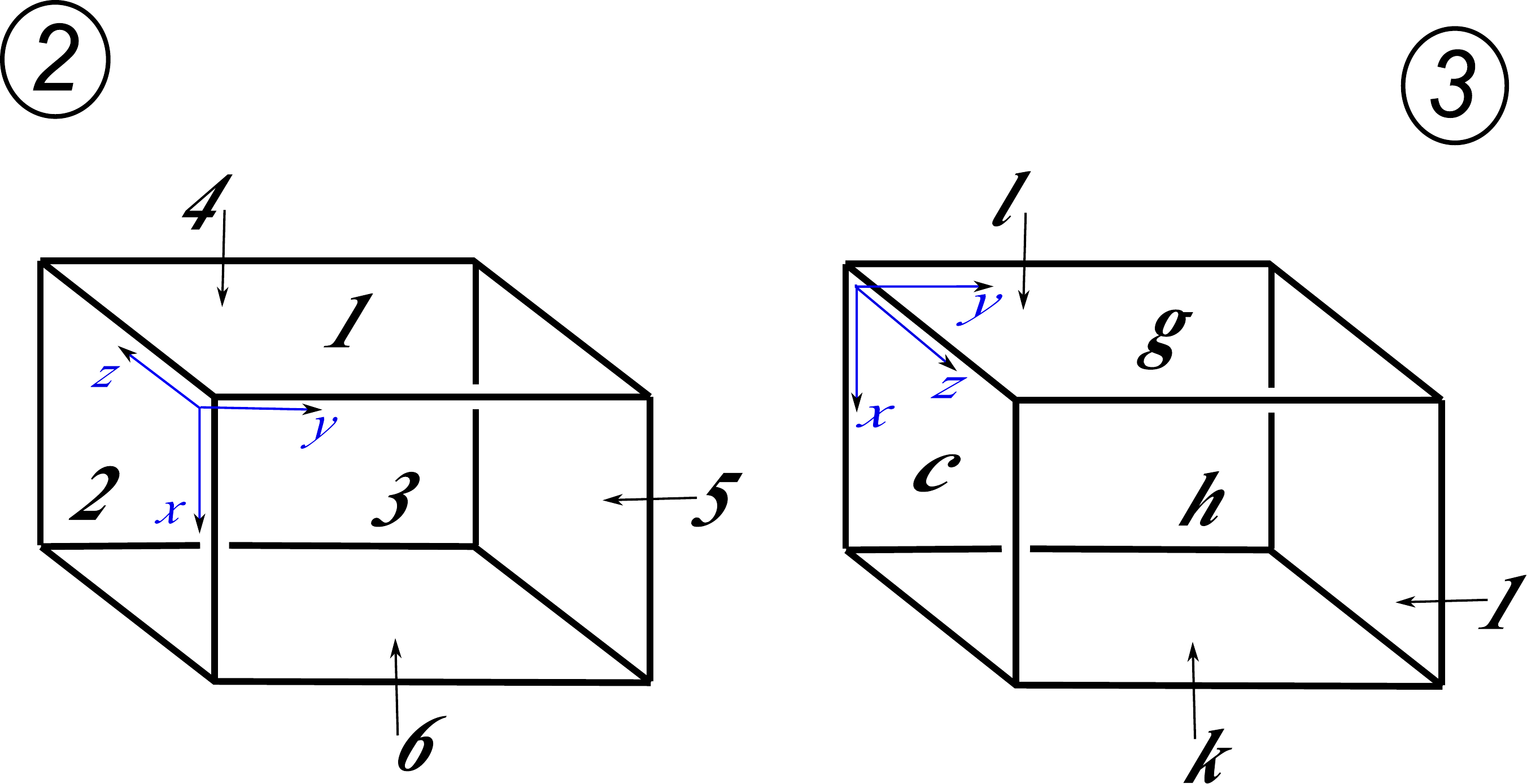}}
  \subfigure[]{\includegraphics[width = 6.2cm]{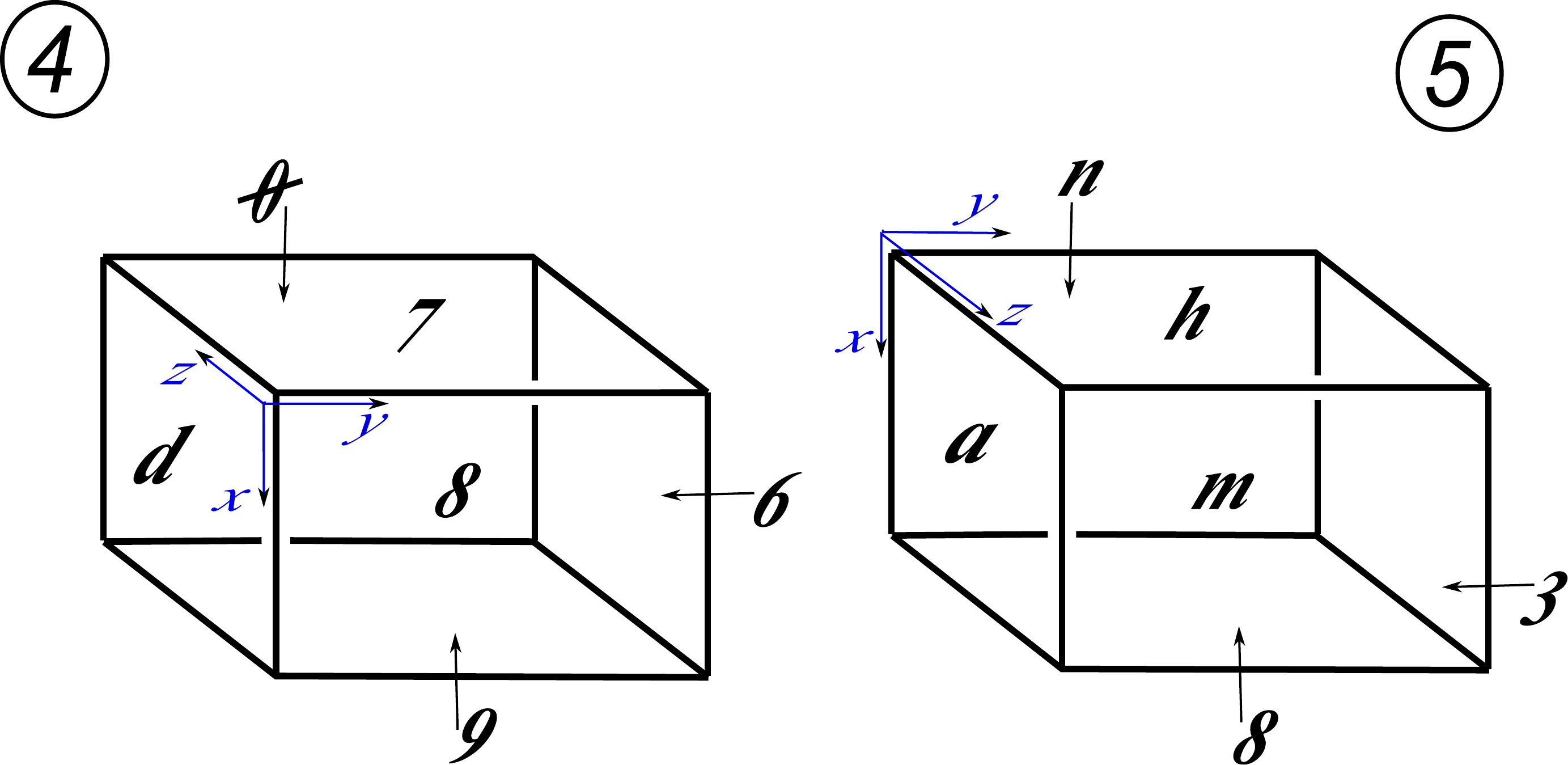}}
  \subfigure[]{\includegraphics[width = 6.2cm]{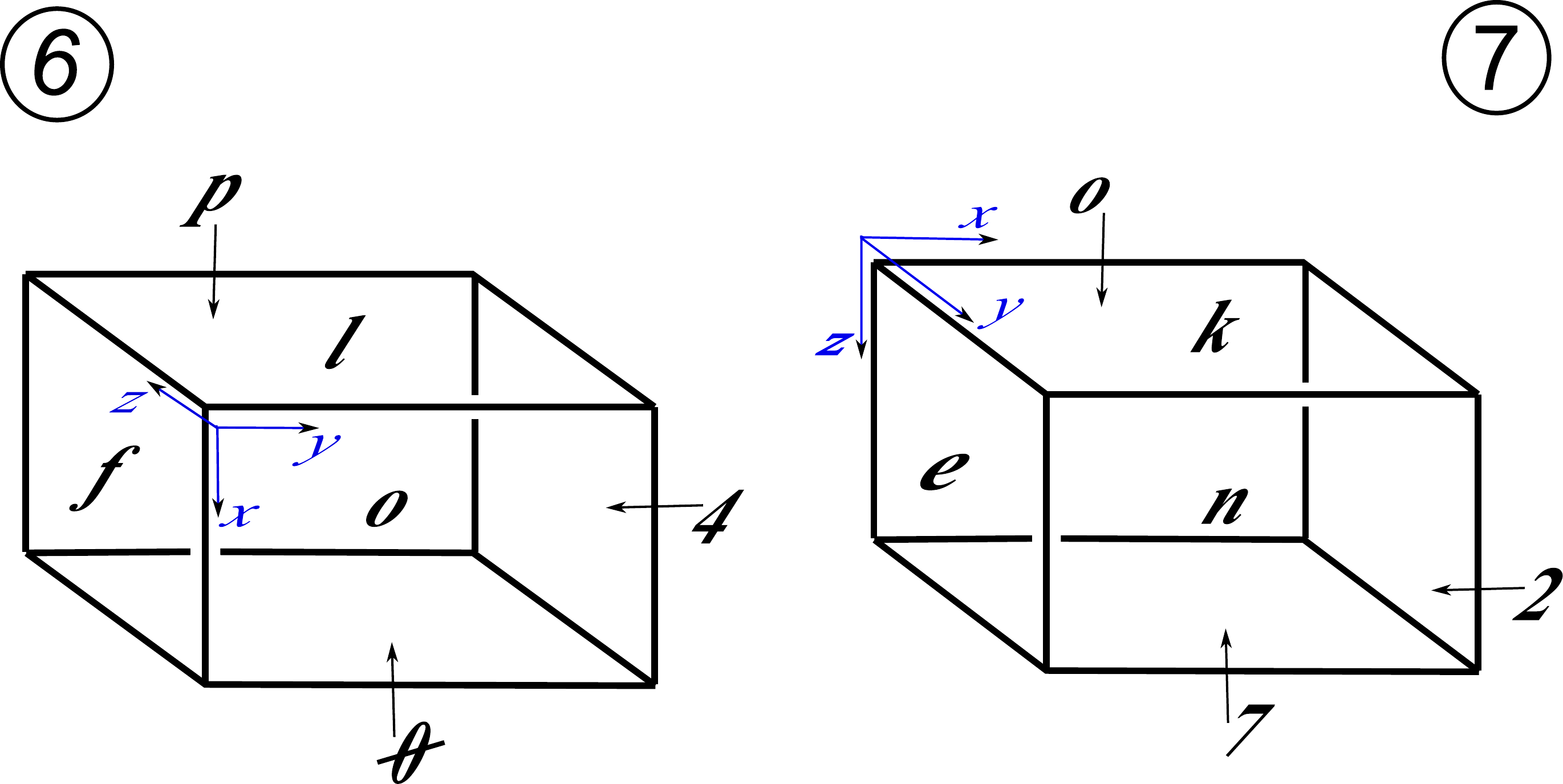}}
 \end{center}
 \caption{The cubulation giving rise to the manifold $\mathscr{L}$ from Proposition~\ref{2cusps:prop}}
 \label{cubulationL:fig}
\end{figure}

\begin{figure}
 \begin{center}
  \subfigure[]{\includegraphics[width = 6.2cm]{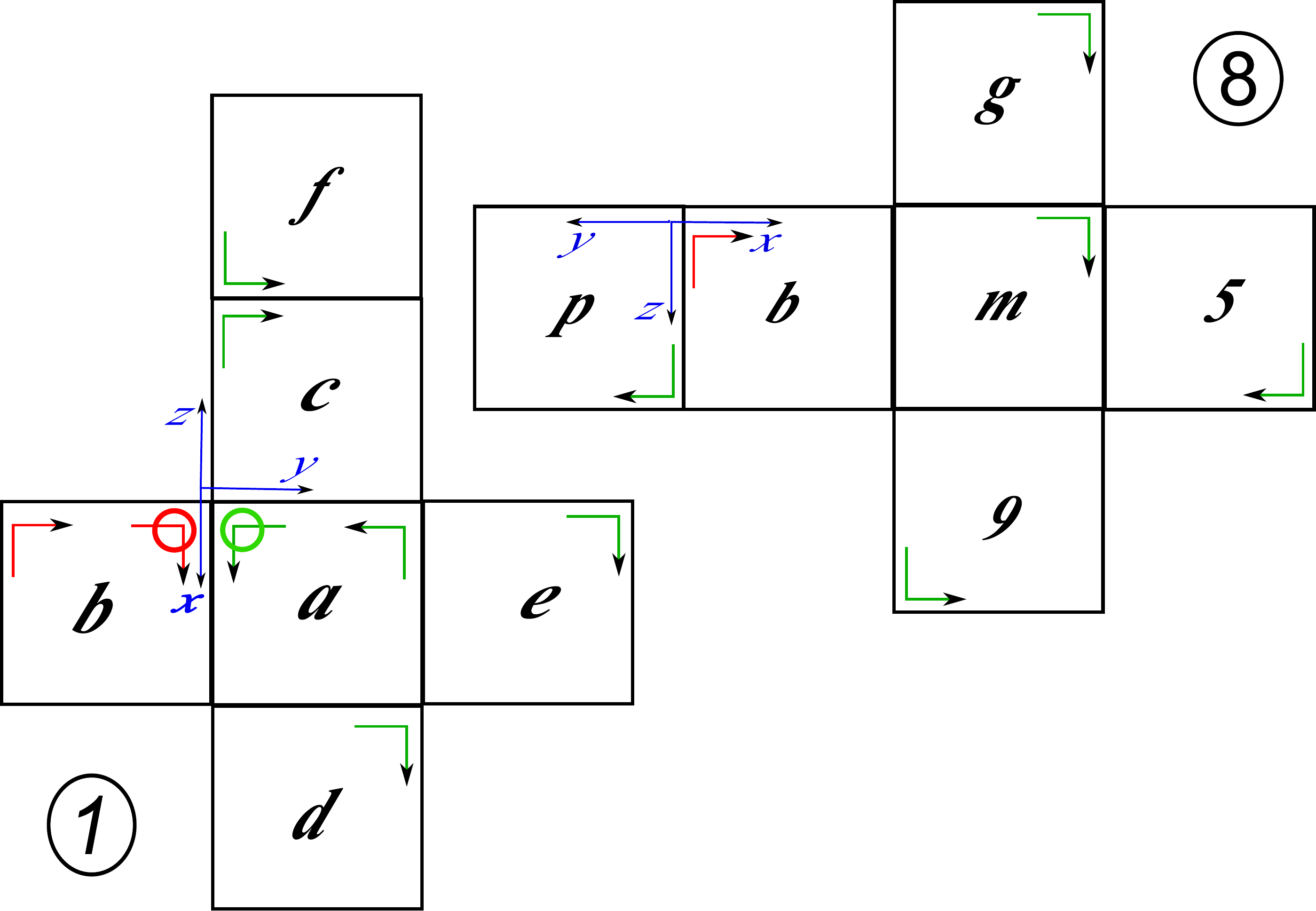}}
  \subfigure[]{\includegraphics[width = 6.2cm]{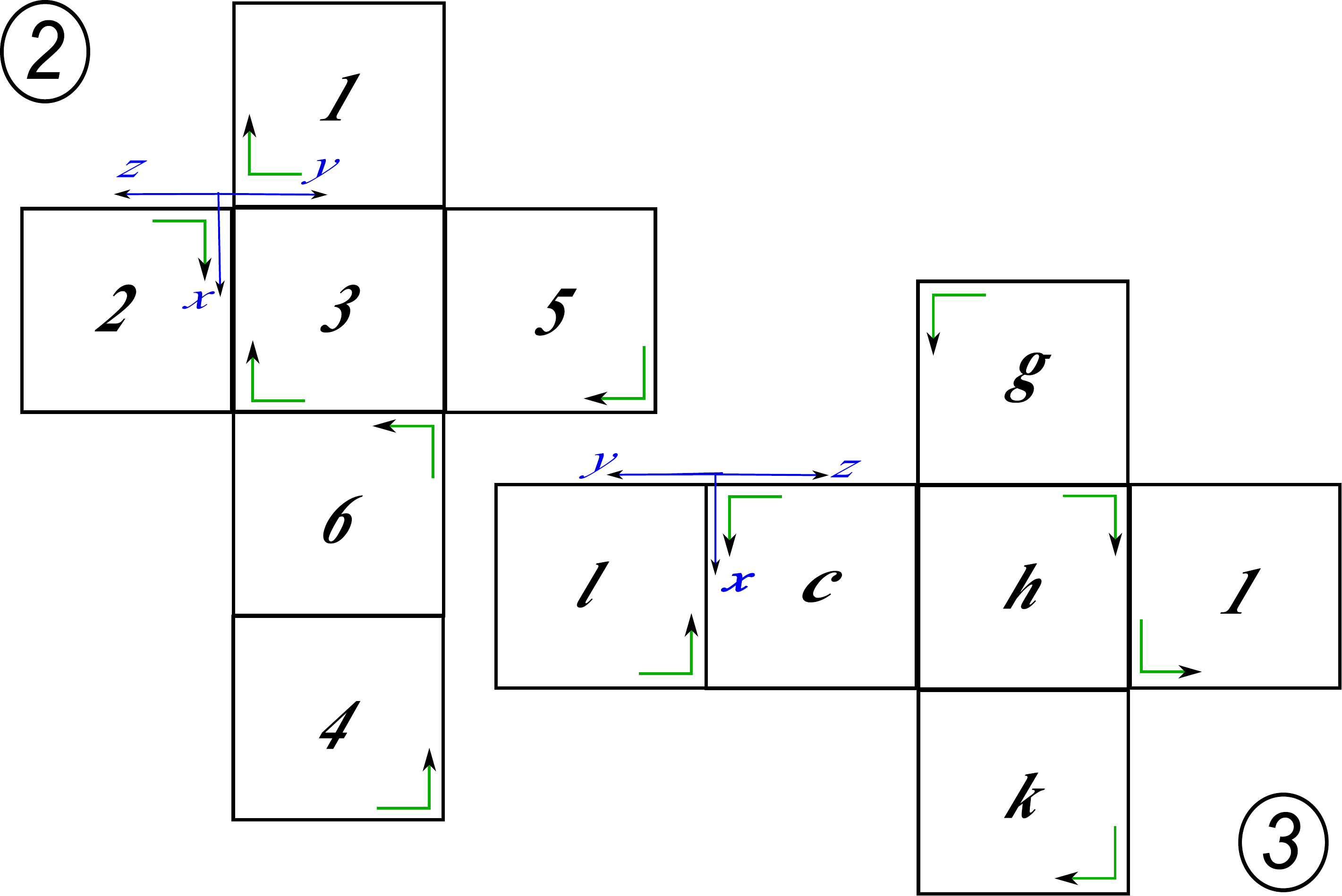}}
  \subfigure[]{\includegraphics[width = 6.2cm]{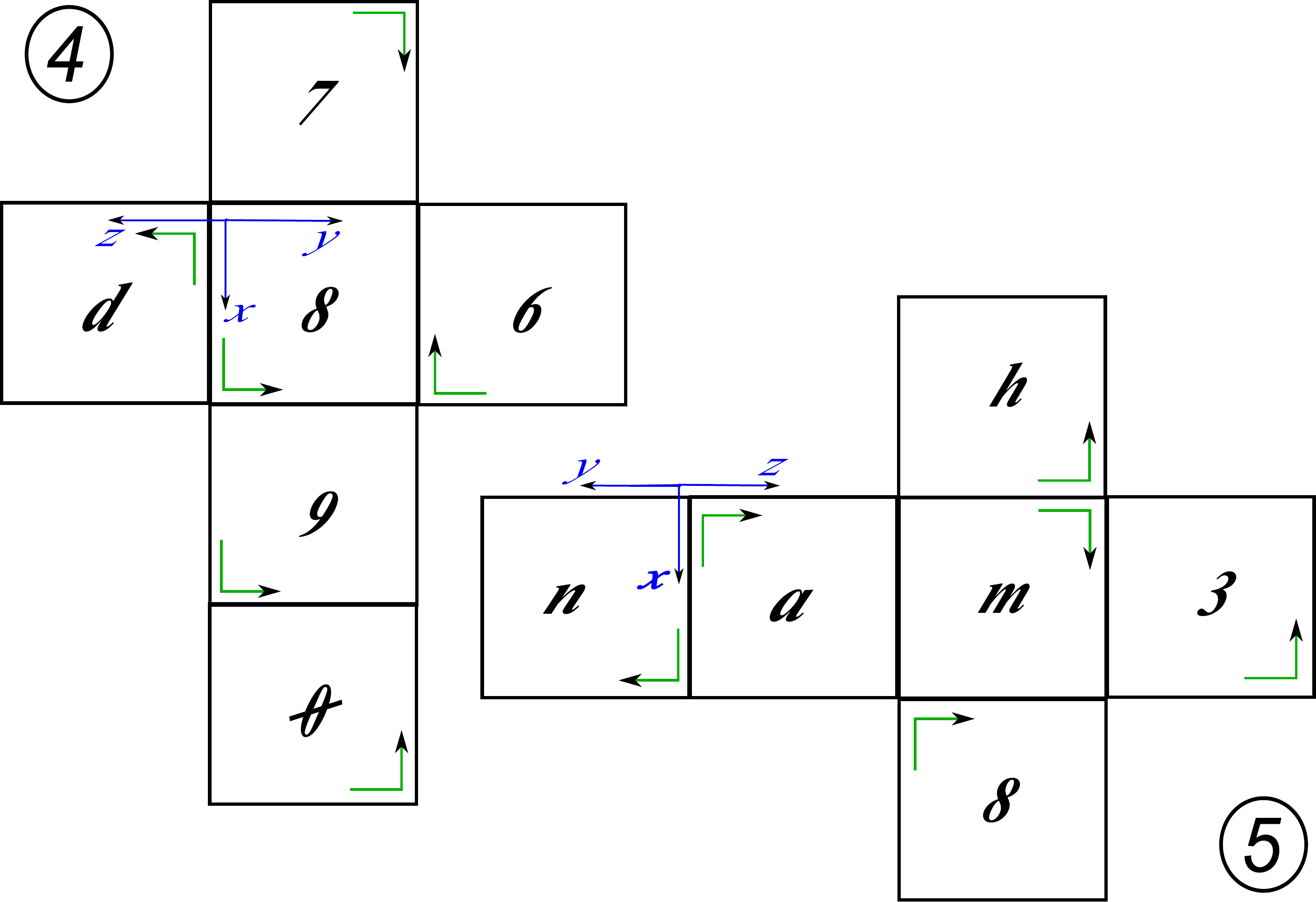}}
  \subfigure[]{\includegraphics[width = 6.2cm]{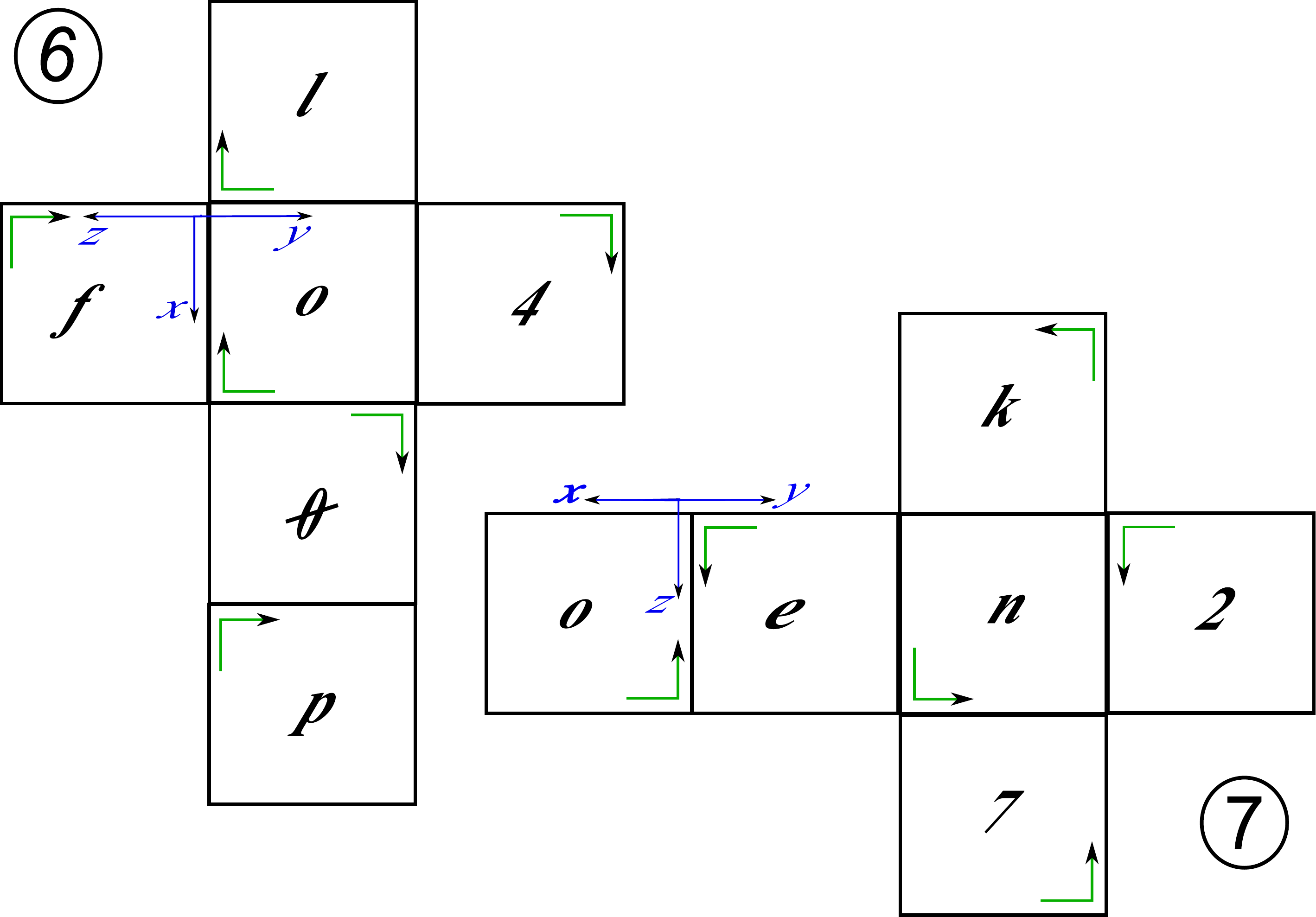}}
 \end{center}
 \caption{The cycles of $2$-faces, giving rise to the cusps of the manifold $\mathscr{K}$ from Proposition~\ref{2cusps:prop}. The frames that allow us to compute the monodromy are depicted}
 \label{cubulationK-monodromy:fig}
\end{figure}

\begin{figure}
 \begin{center}
  \subfigure[]{\includegraphics[width = 6.2cm]{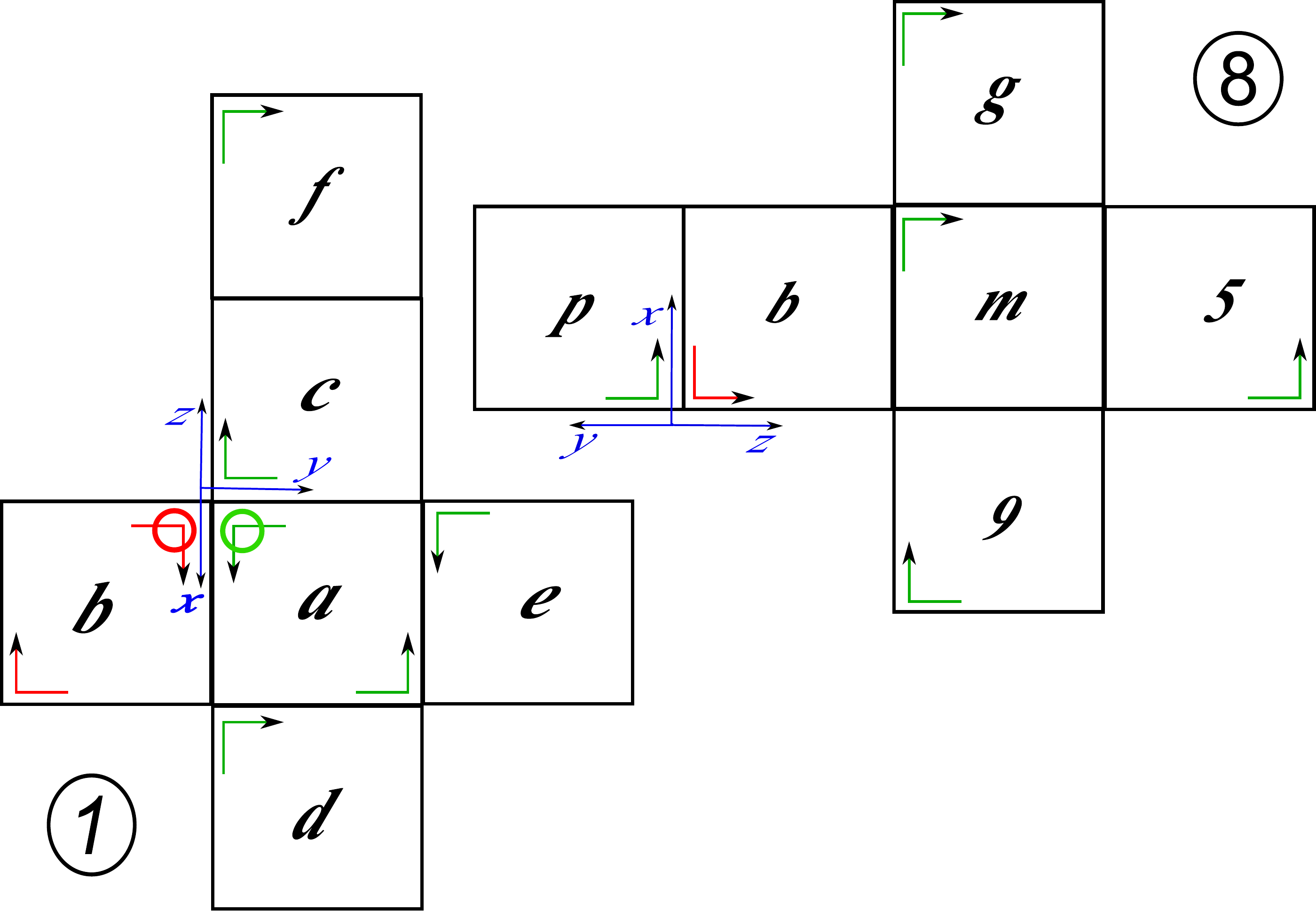}}
  \subfigure[]{\includegraphics[width = 6.2cm]{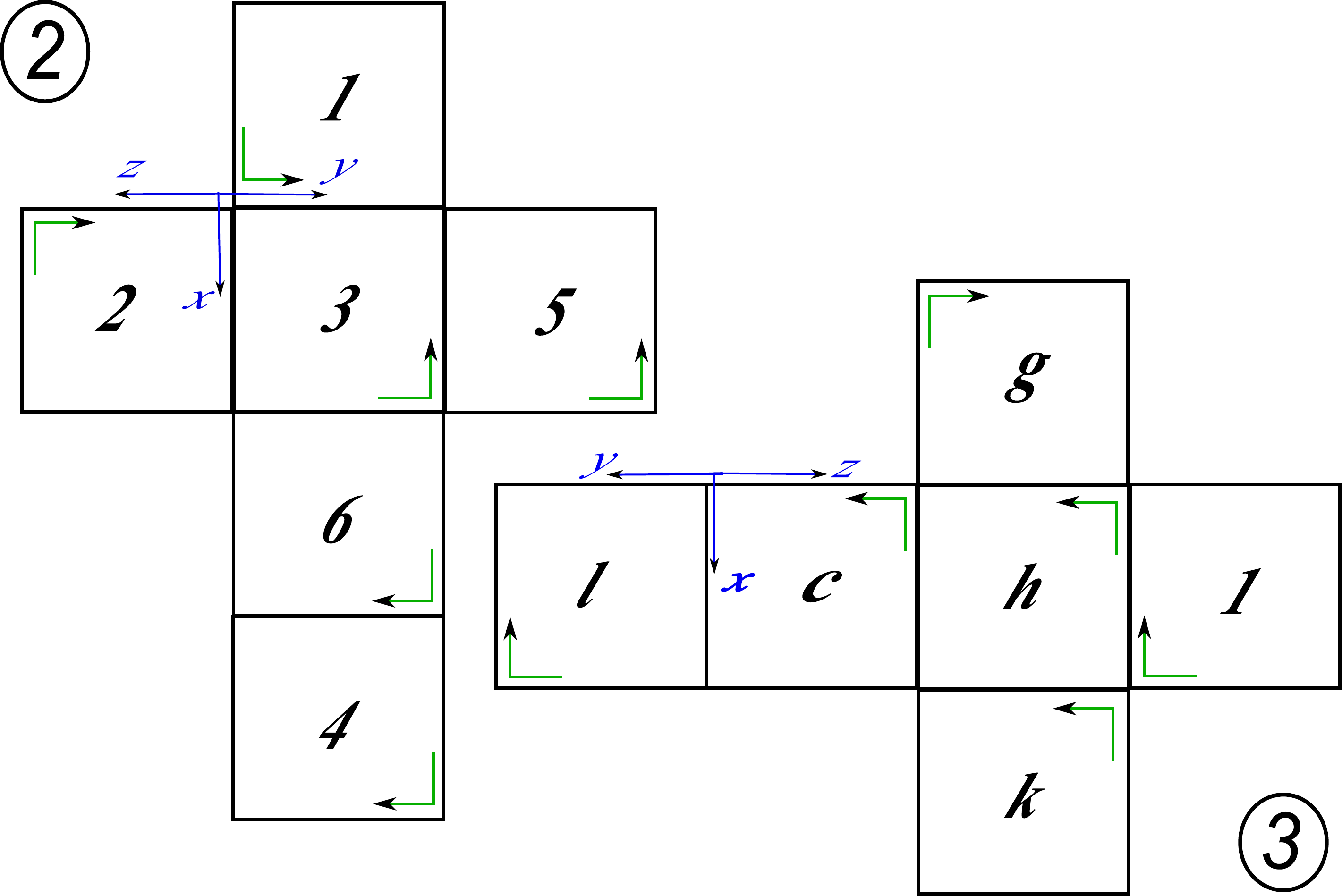}}
  \subfigure[]{\includegraphics[width = 6.2cm]{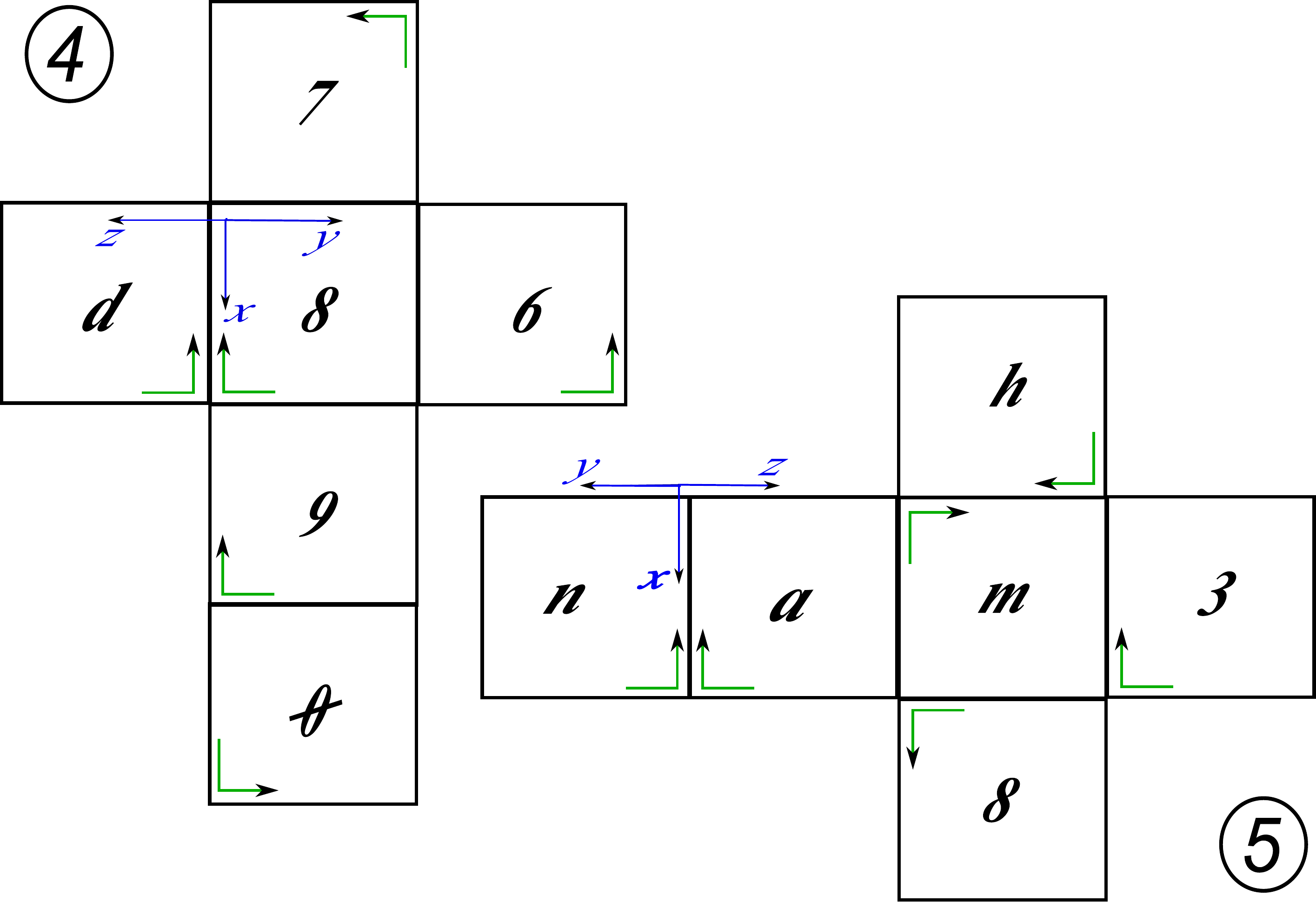}}
  \subfigure[]{\includegraphics[width = 6.2cm]{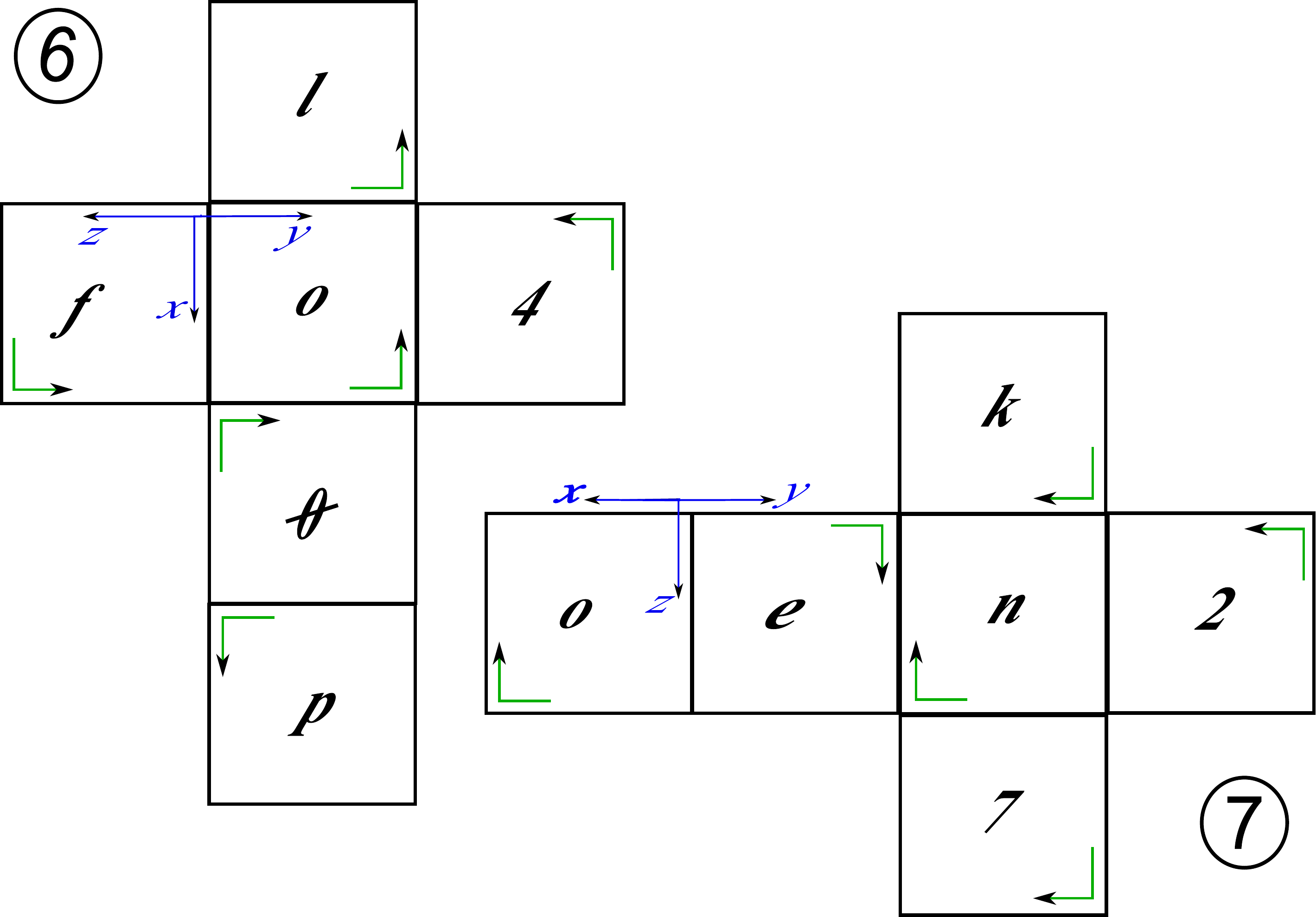}}
 \end{center}
 \caption{The cycles of $2$-faces, giving rise to the cusps of the manifold $\mathscr{L}$ from Proposition~\ref{2cusps:prop}. The frames that allow us to compute the monodromy are depicted.}
 \label{cubulationL-monodromy:fig}
\end{figure}

Using Ontaneda's terminology \cite{O}, we may say that $X\sqcup X$ bounds geometrically a hyperbolic manifold when $X$ is a torus bundle with monodromy $\left(\begin{smallmatrix} -1 & 0 \\ 0 & -1 \end{smallmatrix}\right)$ or  $\left(\begin{smallmatrix} 0 & 1 \\ -1 & 0 \end{smallmatrix}\right)$. We were not able to prove that $X$ bounds a hyperbolic manifold with our constructions; in fact, we formulate the following conjecture:
\begin{conj}
The cusp section of a hyperbolic $4$-manifold with a single cusp arising from a cubulation is a flat $3$-torus.
\end{conj}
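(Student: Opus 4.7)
My approach is to reduce the conjecture to a parity-type statement about the combinatorics of single-cycle cubulations, and then attempt to prove that parity via a global characteristic-class argument. The first step is to equip each hypercube $H$ of the cubulation with its natural four-dimensional chessboard bi-coloring of its $16$ vertices (coloring $(\pm 1,\pm 1,\pm 1,\pm 1)$ by the parity of negative entries). This restricts to the standard three-dimensional chessboard bi-coloring on the $8$ vertices of each cubic facet. Every facet-pairing isometry $\varphi$ then carries a $\matZ/2$ invariant $\epsilon(\varphi)$: either $\varphi$ preserves this bi-coloring, or it swaps it. Via the cube--octahedron duality used throughout the paper, $\epsilon(\varphi)$ coincides with the corresponding invariant of the induced isometry of the octahedral manifold $N$, measuring whether it preserves or exchanges the red/blue coloring of the facets. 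Concretely: when $\epsilon(\varphi)=0$, the canonical lift $\varphi_{*}^{11}$ of Section~\ref{orientablecub:section} respects the sub-cell labeling $(i,j)$ of the block $\block$; when $\epsilon(\varphi)=1$, it transposes the indices $(i,j)\mapsto (j,i)$.

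Translating this to the toric cusp cross-section $T$ (tessellated into four unit squares indexed by $(i,j)$), a preserving pairing acts on $T$ by a translation that is trivial on $H_1(T;\matZ)$, while a swapping pairing acts by a $\pm\pi/2$ rotation about a lattice vertex. Since each facet-pairing is responsible for exactly six edges of the $2$-face identification graph, in the single-cycle case the cycle traverses all six of them. Composing these local contributions in $SO(2)$ (with signs determined by the direction in which each edge is traversed), the rotation part of the monodromy $\psi$ becomes a well-defined invariant $\mu\in\matZ/4$. The first consequence I would establish is that the six-fold multiplicity of each pairing forces $\mu\in\{0,2\}$, already ruling out the $R_{\pm\pi/2}$ monodromy in the single-cusp case and reducing the conjecture to the statement that $\mu\neq 2$.

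The decisive step, and the true content of the conjecture, is to exclude the case $\mu=2$ (which would give the cusp cross-section with monodromy $-I$ instead of $T^3$). My strategy is to realize $\epsilon$ as a $\matZ/2$-valued cocycle $c$ on the quotient $2$-complex of the cubulation, so that $\mu\pmod 2$ equals the pairing $\langle c,[\mathrm{cycle}]\rangle$, and then identify $c$ with the mod-$2$ reduction of a characteristic class of $M$ (for example, $w_2(M)$ or the obstruction class associated with the $\matZ/2\oplus\matZ/2$-symmetry of the block). The hope is that having a single cusp forces the corresponding homology class of the cusp section in $H_3(M;\matZ/2)$ to be trivial, and that this triviality back-translates to $\langle c,[\mathrm{cycle}]\rangle=0$. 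The principal obstacle is precisely this identification: the single-cycle hypothesis is combinatorially rather weak (it is only a connectedness statement on the $2$-face graph), and it is not at all clear why it should imply the vanishing of a mod-$2$ linking invariant. Additionally, one must account for the sign ambiguity arising from the mutation freedom in the construction (Definition~\ref{mutation:defn}), which may require phrasing the conjecture as a statement about mutation-equivalence classes. If the cohomological approach proves intractable, a reasonable fallback is to carry out a computer-assisted enumeration of single-cycle cubulations with small $n$, both to confirm the conjecture experimentally and to look for combinatorial patterns that might suggest a proof or a counterexample.
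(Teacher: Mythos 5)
This statement is a \emph{conjecture} in the paper: the authors explicitly write that they were unable to prove it with their constructions, so there is no proof of it in the paper to compare yours against, and any complete argument would be new mathematics. With that said, the first half of your proposal looks sound and is genuine progress beyond what the paper records. The chessboard-swap invariant $\epsilon$ is a homomorphism to $\matZ_2$; its restriction to a square $2$-face computes the colour-swap parity of each step $\psi_i$ of the cycle, and for an orientation-preserving isometry of the square the swap parity equals the rotation class modulo $2$. Since in a one-cycle cubulation the unique cycle traverses each edge of the incidence graph exactly six times (one passage per $2$-face of the identified cube, as in the proof of Proposition \ref{flower:prop}), the total parity is $6\sum_{\varphi}\epsilon(\varphi)\equiv 0 \pmod 2$, which together with Proposition \ref{monodromy:prop} rules out the monodromy $\left(\begin{smallmatrix}0&1\\-1&0\end{smallmatrix}\right)$ for orientable cubulations. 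Your worry about mutations can also be settled: $\iota$ acts as $-I$ on each of the six cusp tori of a boundary copy of $N$, $-I$ is central among the relevant isometries, and the single cycle meets each geodesic boundary component exactly six times, so mutation multiplies the monodromy by $(-I)^6=I$ and the cusp shape is mutation-invariant in the one-cusp case.

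The genuine gap is the step you yourself flag, and it is worse than merely unproven: as formulated it cannot work. You propose a $\matZ_2$-valued cocycle $c$ with $\langle c,[\mathrm{cycle}]\rangle=\mu \bmod 2$ and hope to show this pairing vanishes --- but your own six-fold-multiplicity argument already proves $\mu\equiv 0\pmod 2$, so establishing $\langle c,[\mathrm{cycle}]\rangle=0$ again yields nothing. The remaining task is to distinguish $\mu=0$ from $\mu=2$ inside $\matZ_4$, i.e.~to exclude the monodromy $-I$, and no mod-$2$ class can see that distinction: $-I$ acts trivially on $H_1(T;\matZ_2)$, so $w_2$-type obstructions are blind to exactly the case you must exclude; one would need at least a $\matZ_4$-valued (Bockstein or quadratic) refinement or an integral invariant, and you identify no candidate. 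Note also that $-I$ genuinely occurs as a monodromy in this construction --- both cusps of the manifold $\mathscr{L}$ in Proposition \ref{2cusps:prop} have it --- so any exclusion must exploit the single-cycle hypothesis in an essential way, whereas your cocycle, depending only on the mod-$2$ class, manifestly does not. (The triviality of $[X]$ in $H_3(\overline{M};\matZ_2)$ for a one-cusped $M$ is automatic, since $X=\partial\overline{M}$, and so carries no extra information by itself.) The computer-enumeration fallback is reasonable but is evidence, not proof. Net assessment: you have a correct and apparently new reduction of the conjecture to excluding $-I$, but the conjecture remains open under your proposal, precisely at the decisive step.
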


Finally, it is easy to construct for every integer $n\geqslant 1$ a $n$-sheeted covering $\mathscr M_n$ of $\mathscr M$ with $n$ cusps, whose sections are $n$ flat tori. It suffices to take $n$ copies $H_0,\ldots, H_{n-1}$ of the hypercube $H$ shown in Fig.~\ref{cube:fig} and pair their facets as follows: 
$$(1_i, 2_i), \quad (3_i, 4_i), \quad (5_i, 6_i), \quad (7_i, 8_{i+1})$$
where $N_i$ indicates the facet number $N$ in $H_i$ and $i+1$ means addition modulo $n$. For every pair, the isometry is again the one described in Fig.~\ref{cubulationM:fig}. 
\begin{prop} The resulting hyperbolic manifold $\mathscr M_n$ has $n$ cusps. Its maximal cusp section consists of $n$ flat 3-tori, each obtained by identifying the opposite faces of a right-angled parallelepiped of size $2\times 2\times 24$.
\end{prop}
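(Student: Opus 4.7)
The plan is to realise $\mathscr M_n$ as a regular cyclic $n$-sheeted cover of the one-cusped manifold $\mathscr M$ constructed in Section~\ref{manifold:subsection}, and to verify that the single cusp of $\mathscr M$ lifts to $n$ disjoint isometric copies in $\mathscr M_n$.

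First I would construct the covering map $q\colon \mathscr M_n \to \mathscr M$ that identifies each $H_i$ with $H$ via the identity. Since the pairings $(1_i,2_i)$, $(3_i,4_i)$, $(5_i,6_i)$ of $\mathscr M_n$ are given by exactly the same isometries as the corresponding pairings of $\mathscr M$, and each pairing $(7_i,8_{i+1})$ uses the isometry of the $(7,8)$-pairing of $\mathscr M$, the map $q$ descends to a well-defined isometric $n$-fold cover, with deck group $\matZ/n$ acting by the cyclic shift $H_i\mapsto H_{i+1}$.

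Next I would count the cusps of $\mathscr M_n$ by analysing the cycles of its $24n$ square $2$-faces, which I label $(\mathbf{x},i)$ with $\mathbf{x}$ a $2$-face of $H$ and $i\in\{0,\dots,n-1\}$. Under $q$ every such cycle projects onto the unique $24$-cycle of $\mathscr M$ from Section~\ref{manifold:subsection}. Any transition in this abstract cycle preserves the index $i$ when it uses the pairing $(1,2)$, $(3,4)$, or $(5,6)$, and shifts $i$ by $+1$ when it crosses from a $2$-face on the cubic facet~$7$ of $H_i$ to a $2$-face on the facet~$8$ of $H_{i+1}$, and by $-1$ when it travels in the opposite direction. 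Inspecting the cycle
\[
\mathbf{a}\mathbf{1}\mathbf{8}\mathbf{p}\mathbf{7}\mathbf{k}\mathbf{m}\mathbf{\emptyset}
\mathbf{c}\mathbf{3}\mathbf{f}\mathbf{6}\mathbf{l}\mathbf{n}\mathbf{5}\mathbf{e}\mathbf{9}\mathbf{g}
\mathbf{2}\mathbf{b}\mathbf{o}\mathbf{h}\mathbf{d}\mathbf{4}\mathbf{a}
\]
together with Fig.~\ref{cubulationM:fig} shows that the six $(7,8)$-transitions split as three of each sign, so the net shift around the cycle is zero. Hence starting at any $(\mathbf{x},i)$ the cycle closes up after exactly $24$ steps, and we obtain $n$ disjoint cycles of length $24$, giving the $n$ cusps claimed.

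Finally, the shape of each cusp follows from Proposition~\ref{monodromy:prop}: the monodromy of each lifted cycle is the composition of exactly the same sequence of local pairing isometries as the corresponding cycle in $\mathscr M$, and so equals the identity, as already established for $\mathscr M$. Therefore each cusp section of $\mathscr M_n$ is isometric to $T\times[0,24]/_{\mathrm{id}}$, that is, a right-angled $2\times 2\times 24$ flat $3$-torus. The main obstacle is the signed enumeration of the $(7,8)$-transitions: it requires, for each of the six $(7,8)$-pairs, identifying which $2$-face lies on facet~$7$ and which on facet~$8$ of $H$, which is mechanical bookkeeping rather than a conceptual difficulty.
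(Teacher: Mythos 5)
Your proposal is correct and follows essentially the same route as the paper: the paper's proof simply adds subscripts to the glueing tables and writes out the $n$ lifted cycles explicitly, which is exactly your signed bookkeeping of the six $(7,8)$-transitions (three shifts of each sign, so net shift zero and each lift closes after $24$ steps). The trivial monodromy and the $2\times 2\times 24$ cusp shape then follow from Proposition \ref{monodromy:prop} just as in your final paragraph, so there is no substantive difference beyond your covering-space framing of the same computation.
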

\begin{proof}
The square 2-faces are identified according to the following patterns, obtained by adding subscripts to \eqref{cubulationM:glueing1}-\eqref{cubulationM:glueing2}:
\begin{align}\label{cubulationM:glueing3}
\begin{array}{cccc}
\mathbf{a}_i\rightarrow \mathbf{1}_i& \mathbf{f}_i\rightarrow \mathbf{6}_i& \mathbf{1}_i\rightarrow \mathbf{8}_i& \mathbf{c}_i\rightarrow \mathbf{\emptyset}_i\\
\mathbf{b}_i\rightarrow \mathbf{2}_i& \mathbf{e}_i\rightarrow \mathbf{5}_i& \mathbf{g}_i\rightarrow \mathbf{9}_i& \mathbf{k}_i\rightarrow \mathbf{7}_i\\
\mathbf{c}_i\rightarrow \mathbf{3}_i& \mathbf{d}_i\rightarrow \mathbf{4}_i& \mathbf{h}_i\rightarrow \mathbf{d}_i& \mathbf{l}_i\rightarrow \mathbf{6}_i
\end{array}
\end{align}
\begin{align}\label{cubulationM:glueing4}
\begin{array}{cccc}
\mathbf{m}_i\rightarrow \mathbf{\emptyset}_i& \mathbf{n}_i\rightarrow \mathbf{l}_i& \mathbf{e}_i\rightarrow \mathbf{9}_{i+ 1}& \mathbf{2}_i\rightarrow \mathbf{g}_{i+ 1}\\
\mathbf{3}_i\rightarrow \mathbf{f}_i& \mathbf{a}_i\rightarrow \mathbf{4}_i& \mathbf{n}_i\rightarrow \mathbf{5}_{i+ 1}& \mathbf{o}_i\rightarrow \mathbf{b}_{i+ 1}\\
\mathbf{h}_i\rightarrow \mathbf{o}_i& \mathbf{8}_i\rightarrow \mathbf{p}_i& \mathbf{k}_i\rightarrow \mathbf{m}_{i+ 1}& \mathbf{7}_i\rightarrow \mathbf{p}_{i+ 1}.
\end{array}
\end{align}

The cubulation has exactly $n$ cycles of square $2$-faces, each of the form
\begin{align*}
\mathbf{a}_i\mathbf{1}_i\mathbf{8}_i\mathbf{p}_i\mathbf{7}_{i- 1}\mathbf{k}_{i- 1}\mathbf{m}_i\mathbf{\emptyset}_i
\mathbf{c}_i\mathbf{3}_i\mathbf{f}_i\mathbf{6}_i\mathbf{l}_i\mathbf{n}_i\mathbf{5}_{i+ 1}\mathbf{e}_{i+ 1}\mathbf{9}_{i+ 2}\mathbf{g}_{i+ 2}
\mathbf{2}_{i+ 1}\mathbf{b}_{i+ 1}\mathbf{o}_i\mathbf{h}_i\mathbf{d}_i\mathbf{4}_i\mathbf{a}_i.
\end{align*}
The cycle is analogous to (\ref{cubulationM:cycle}) and thus has trivial monodromy.
\end{proof}

\begin{cor}
For every integer $n$ there is a hyperbolic orientable four-manifold with $n$ cusps, all whose sections are $3$-tori.
\end{cor}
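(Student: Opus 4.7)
The statement is an immediate consequence of the preceding proposition, so the plan is simply to invoke that construction and observe that it already delivers what is asserted. Concretely, I would take the manifold $\mathscr{M}_n$ built in the proposition above from $n$ copies $H_0,\ldots,H_{n-1}$ of the hypercube, paired by the recipe
\[
(1_i,2_i),\quad (3_i,4_i),\quad (5_i,6_i),\quad (7_i,8_{i+1}),
\]
with indices modulo $n$ and with each pairing given by the isometry of Fig.~\ref{cubulationM:fig}.

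By the general construction of Section~\ref{construction:subsection}, $\mathscr{M}_n$ is an orientable complete finite-volume hyperbolic four-manifold. The preceding proposition asserts two things about it: it has exactly $n$ cusps (one per cycle of square $2$-faces), and each maximal cusp section is a flat $3$-torus obtained by identifying the opposite faces of a right-angled $2\times 2\times 24$ parallelepiped. In particular, every cusp section is homeomorphic to the $3$-torus $T^3$.

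Thus the plan has no real obstacle: the corollary is obtained simply by taking $\mathscr{M}_n$ as the required manifold. The only point worth emphasising, which I would note in passing, is that the three-torus topology of the cusp sections comes from Proposition~\ref{monodromy:prop} applied to the explicitly computed trivial monodromy of each cycle, exactly as in the case $n=1$ of the manifold $\mathscr{M}$.
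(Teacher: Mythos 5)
Your proposal is correct and matches the paper exactly: the corollary is stated as an immediate consequence of the preceding proposition on $\mathscr{M}_n$, whose proof has already verified the $n$ cycles of square $2$-faces and their trivial monodromies, so invoking that proposition (together with Proposition \ref{monodromy:prop}, as you note) is precisely the paper's argument.
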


\section{Dehn filling} \label{Dehn:section}
In the previous sections we have developed an algorithm that transforms an orientable cubulation $C$ into a hyperbolic four-manifold $M$. Here we assume that the monodromy of every cycle of $2$-dimensional faces is trivial, so every cusp section is isometric to the $3$-torus obtained by identifying the opposite faces of the right-angled parallelepiped $[-1,1]\times [-1,1] \times [0,h]$ (see for instance Examples \ref{1:example} and \ref{2:example}).

The manifold $M$ is the interior of a compact manifold $\overline M$ with boundary that consists of $3$-tori. Let us consider one boundary component $X\subset \partial \overline M$. A \emph{Dehn filling} on $X$ is the topological operation that consists of attaching a copy of $D^2 \times T^2$ along $X$. The resulting smooth manifold is determined only by the homotopy class of the closed curve $\partial D^2 \times \{{\rm pt}\}$ in $\pi_1(X)=H_1(X,\matZ)$. If we fix a basis for the homology, we identify $H_1(X,\matZ)$ with $\matZ^3$ and the Dehn filling is determined by a triple $(p,q,r)$ of co-prime integers. A natural basis here is given by the three sides of the parallelepiped.

It is then possible to encode the Dehn fillings of $M$ by assigning a triple $(p,q,r)$ at each cycle of $2$-dimensional faces. This determines a curve in the $3$-torus $[-1,1]\times [-1,1] \times [0,h]/_\sim$ having length $\ell = \sqrt {(2p)^2 + (2q)^2 + (hr)^2}$.

By Thurston-Gromov's $2\pi$-theorem, which holds in all dimensions (see for instance \cite{A}), whenever $\ell \geqslant 2\pi$ the Dehn filled manifold admits a non-positively curved metric and is hence in particular aspherical by Cartan-Hadamard theorem. We can therefore construct plenty of non-positively curved four-manifolds from a simple combinatorial datum: a cubulation where each cycle of $2$-faces has trivial monodromy and is assigned a triple of co-prime numbers $(p,q,r)$ such that $p^2+q^2+(hr/2)^2 > \pi^2$.

Recently, M.~Anderson has extended Thurston's Dehn filling theorem by showing that if $\ell$ is big enough then the filled manifold admits an Einstein metric \cite{A}, see also \cite{B}. We can therefore also construct many such manifolds from a simple combinatorial datum. 

Let $\sigma$ and $\|\cdot \|$ denote the signature and Gromov norm, respectively, and let $n$ denote the number of hypercubes in the original cubulation. Let $v_4$ be the volume of the ideal regular hyperbolic $4$-simplex.

\begin{prop} The Dehn filled manifold $M^{\rm fill}$ has 
\begin{align*}
\chi(M^{\rm fill}) & = \chi(M) = 4n, \\
\sigma(M^{\rm fill}) & = \sigma(M) =0, \\
\|M^{\rm fill}\| & \leqslant \|M\| = \frac{\Vol(M)}{v_4} = \frac{16n}{3v_4}\pi^2.
\end{align*}
\end{prop}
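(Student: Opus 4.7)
My plan is to decompose $M^{\mathrm{fill}}$ as $\overline M \cup_{\partial\overline M}\bigsqcup_j (D^2\times T^2)$, where $\overline M$ is the compactification of $M$ obtained by truncating each cusp at its maximal horospherical section, and analyse the three invariants separately on this decomposition. For the Euler characteristic I will simply apply inclusion--exclusion: since $\chi(D^2\times T^2)=\chi(D^2)\chi(T^2)=0$ and $\chi(T^3)=0$, the filling pieces and the boundary contribute nothing, so
$$\chi(M^{\mathrm{fill}})=\chi(\overline M)=\chi(M)=4n,$$
the last equality being Proposition~\ref{volume:prop}.

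For the signature I will first use Novikov additivity to write $\sigma(M^{\mathrm{fill}})=\sigma(\overline M)+k\,\sigma(D^2\times T^2)$, where $k$ is the number of cusps. Each $\sigma(D^2\times T^2)$ vanishes because $H_2(D^2\times T^2;\matR)\cong\matR$ is generated by the torus fibre $\{*\}\times T^2$, which has zero self-intersection via a push-off in the $D^2$-factor. To show $\sigma(\overline M)=\sigma(M)=0$ I will invoke the Atiyah--Patodi--Singer signature theorem,
$$\sigma(M)=\tfrac{1}{3}\int_M p_1-\eta(\partial\overline M),$$
after first modifying the hyperbolic metric to a product metric near $\partial\overline M$, a change that affects neither term. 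The first Pontryagin form $p_1$ vanishes pointwise for any constant-sectional-curvature metric, as one checks directly from $R_{ijkl}=-(g_{ik}g_{jl}-g_{il}g_{jk})$: the curvature $2$-forms $\Omega^i{}_j$ then satisfy $\mathrm{tr}(\Omega\wedge\Omega)=0$ because every wedge $(e^i\wedge e^j)\wedge(e^i\wedge e^j)$ has a repeated factor. For the eta term, each cusp cross-section is, by Proposition~\ref{h:prop}, either a flat $3$-torus or a torus bundle with monodromy $-I$ or rotation by $\pi/2$; in each case an orientation-reversing isometry exists (for instance the reflection $(x,y,z)\mapsto(-x,y,z)$, which one verifies normalises the relevant lattice), forcing $\eta=-\eta$ and hence $\eta=0$.

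For the Gromov norm, the equality $\|M\|=\Vol(M)/v_4$ is Gromov's volume theorem for finite-volume hyperbolic $4$-manifolds, which combined with Proposition~\ref{volume:prop} yields $\|M\|=16n\pi^2/(3v_4)$. The inequality $\|M^{\mathrm{fill}}\|\leqslant\|M\|$ is an instance of the general Dehn-filling estimate: since $\pi_1(T^3)$ is amenable, one has $\|\overline M,\partial\overline M\|=\|M\|$ by Gromov's vanishing theorem for amenable boundaries; and any $\varepsilon$-efficient relative fundamental cycle on $(\overline M,\partial\overline M)$ can be extended to an absolute cycle on $M^{\mathrm{fill}}$ by capping its boundary off with chains supported in the filling solid tori $D^2\times T^2$ of arbitrarily small norm, again because $\pi_1(D^2\times T^2)\cong\matZ^2$ is amenable.

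I expect the most delicate step to be the APS argument for $\sigma(M)=0$, since one should carefully justify the APS formula in the cusped (not-quite-product) setting, and verify the orientation-reversing isometry claim for each of the three cusp topologies listed in Proposition~\ref{h:prop}. Once this is done, the Euler and Gromov-norm statements are essentially immediate from Novikov additivity, Gromov's volume theorem, and the amenability of $\pi_1(T^3)$.
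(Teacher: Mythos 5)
Your proposal follows the same architecture as the paper's proof: Novikov additivity together with $\sigma(D^2\times T^2)=\chi(D^2\times T^2)=0$ for the first two invariants, the identity $\sigma(M)=-\eta(\partial\overline M)$ combined with the vanishing of $\eta$ via an orientation-reversing isometry of the cusp sections, and Gromov's $\|M\|=\Vol(M)/v_4$ together with a filling inequality for the Gromov norm. The only substantive difference is that where the paper simply cites Long--Reid \cite{LR} for $\sigma(M)=-\eta(\partial\overline M)$ and Fujiwara--Manning \cite{FM} for $\|M^{\rm fill}\|\leqslant\|M\|$, you unpack both citations: your APS computation, with the pointwise vanishing of the Pontryagin form for a constant-curvature metric, is precisely the content of the relevant lemma in \cite{LR} (and you rightly flag the non-product cusp geometry as the delicate point there, which the paper sidesteps by citing \cite{LR}), while your amenable-capping sketch is the mechanism behind \cite{FM}. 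Both unpackings are correct in outline, and your justification of $\sigma(D^2\times T^2)=0$ via the zero self-intersection of the torus fibre is fine.

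There is, however, one genuine error in your argument, which happens to be harmless for the statement at hand. You treat all three cusp types of Proposition \ref{h:prop}, but the Dehn-filling section operates under the standing assumption that every cycle of $2$-faces has trivial monodromy, so every cusp section is a flat $3$-torus; this is forced in any case, since $\partial(D^2\times T^2)\cong T^3$ can only be attached to a torus boundary component, so $M^{\rm fill}$ is not even defined otherwise. More seriously, your claimed orientation-reversing isometry fails for the $\pi/2$-rotation mapping torus: writing its Bieberbach group as generated by a horizontal lattice and the screw motion $\gamma\colon(x,y,z)\mapsto(y,-x,z+h)$, the reflection $R\colon(x,y,z)\mapsto(-x,y,z)$ conjugates $\gamma$ to the element with rotational part equal to that of $\gamma^{-1}$ but vertical translation $+h$ rather than $-h$, which does not lie in the group (the elements with that rotational part have vertical translation in $h(3+4\matZ)$). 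A short case check on the possible linear parts shows this flat manifold admits no orientation-reversing isometry at all; indeed, the nonvanishing of $\eta$ for some orientable flat $3$-manifolds is exactly the Long--Reid obstruction, so one cannot expect mirrorability in general. For the $3$-torus (and, incidentally, also for the $-I$ monodromy bundle) your reflection does normalise the group, so once the spurious cases are discarded your proof of the proposition as stated is valid.
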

\begin{proof}
Both the Euler characteristic and the signature are additive on the pieces when we glue $T^2\times D^2$ to $\overline M$ by Novikov's additivity theorem \cite{Nov}. We have $\sigma(T^2\times D^2) = \chi(T^2\times D^2)=0$, and hence $\chi (M^{\rm fill}) = \chi(M)= 4n$ by Proposition \ref{volume:prop} and $\sigma(M^{\rm fill}) = \sigma (M) = -\eta (\partial \overline M)$ by \cite{LR}. The boundary of $\overline M$ consists of $3$-tori, that are mirrorable and hence their $\eta$-invariant vanishes.

The Gromov norm of $M^{\rm fill}$ is not bigger than that of $M$ by \cite{FM} and we have $\|M\| = \frac{\Vol(M)}{v_4} =  \frac{16n}{3v_4}\pi^2$.
\end{proof}

Via cubulations we may also construct certain hyperbolic four-manifolds $M_1$, $\dots$, $M_k$, representing the interiors of compact ones, say $\overline{M_1}, \dots, \overline{M_k}$, with toric boundaries, and then pair the boundary tori along some diffeomorphisms, obtaining various graph manifolds \cite{FLS}.

\end{document}